\pdfoutput=1
\documentclass{amsart}

\usepackage{lmodern}
\usepackage[utf8]{inputenc}
\usepackage[mathscr]{eucal}
\usepackage{amssymb}
\usepackage{stmaryrd}
\usepackage[usenames,dvipsnames]{xcolor}
\usepackage{xspace}
\usepackage{xparse}
\usepackage{amsthm}
\usepackage{bbold}
\usepackage[normalem]{ulem}
\usepackage{float}
\usepackage{adjustbox}
\usepackage{enumitem}
\setlist{leftmargin=*, wide, labelindent=0pt}
\setlist[enumerate]{label*=(\alph*),ref=\alph*}

\usepackage{array}
\usepackage{amsmath}
\usepackage{wasysym}
\usepackage{etoolbox}
\usepackage{mathdots}
\usepackage{attrib}
\usepackage[style=alphabetic,maxbibnames=99,backend=biber]{biblatex}
\usepackage{todonotes}

\numberwithin{equation}{section}
\usepackage[all]{xy}
\SelectTips{cm}{}
\newdir{ >}{{}*!/-10pt/\dir{>}}
\usepackage{xr-hyper}
\usepackage[colorlinks=true,linkcolor={Black},citecolor={Black},urlcolor={Black}]{hyperref}
\usepackage[nameinlink]{cleveref}

\crefname{Thm}{Theorem}{Theorems}
\crefname{Rem}{Remark}{Remarks}
\crefname{Prop}{Proposition}{Propositions}
\crefname{Cor}{Corollary}{Corollaries}
\crefname{Cons}{Construction}{Constructions}
\crefname{Exa}{Example}{Examples}
\crefname{Lem}{Lemma}{Lemmas}
\crefname{Rec}{Recollection}{Recollections}

\def\makeautorefname#1#2{\expandafter\def\csname#1autorefname\endcsname{#2}}
\def\equationautorefname~#1\null{(#1)\null}
\makeautorefname{footnote}{footnote}%
\makeautorefname{item}{item}%
\makeautorefname{figure}{Figure}%
\makeautorefname{table}{Table}%
\makeautorefname{part}{Part}%
\makeautorefname{appendix}{Appendix}%
\makeautorefname{chapter}{Chapter}%
\makeautorefname{section}{Section}%
\makeautorefname{subsection}{Section}%
\makeautorefname{subsubsection}{Section}%
\makeautorefname{theorem}{Theorem}%
\makeautorefname{corollary}{Corollary}%
\makeautorefname{lemma}{Lemma}%
\makeautorefname{proposition}{Proposition}%
\makeautorefname{pro}{Property}
\makeautorefname{conj}{Conjecture}%
\makeautorefname{definition}{Definition}%
\makeautorefname{notn}{Notation}
\makeautorefname{notns}{Notations}
\makeautorefname{rem}{Remark}%
\makeautorefname{quest}{Question}%
\makeautorefname{example}{Example}%
\makeautorefname{Rec}{Recollection}%
\theoremstyle{plain}  
\newtheorem{theorem}{Theorem}[section]
\newtheorem{proposition}{Proposition}[section]
\newtheorem{lemma}{Lemma}[section]
\newtheorem{fact}{Fact}[section]
\newtheorem{corollary}{Corollary}[section]

\newtheorem*{Conj*}{Conjecture}

\theoremstyle{definition}
\newtheorem{definition}{Definition}[section]

\newtheorem{example}{Example}[section]
\newtheorem{remark}{Remark}[section]

\newtheorem{warning}{Warning}[section]

\newtheorem{Rec}{Recollection}[section]
\makeatletter
\let\c@corollary=\c@theorem
\let\c@proposition=\c@theorem
\let\c@strategy=\c@theorem
\let\c@warning=\c@theorem
\let\c@construction=\c@theorem
\let\c@notation=\c@theorem
\let\c@lemma=\c@theorem
\let\c@definition=\c@theorem
\let\c@example=\c@theorem
\let\c@fact=\c@theorem
\let\c@remark=\c@theorem
\let\c@Rec=\c@theorem
\numberwithin{equation}{section}

\newcommand{\nc}{\newcommand}
\nc{\dmo}{\DeclareMathOperator}

\dmo{\coker}{coker}
\dmo{\cone}{cone}
\dmo{\Der}{D}
\dmo{\DAM}{\DAMbig^{\geom}}%
\dmo{\DAMbig}{DAM}%
\dmo{\Ext}{Ext}
\dmo{\Gal}{Gal}
\dmo{\Hm}{H}
\dmo{\Hom}{Hom}
\dmo{\Id}{Id}
\dmo{\Ind}{Ind}
\dmo{\Infl}{Infl}
\dmo{\Ker}{Ker}
\dmo{\Mod}{Mod}
\dmo{\opname}{op}
\dmo{\perm}{perm}
\dmo{\Perm}{Perm}
\dmo{\SH}{SH}
\dmo{\SHmot}{SH^{\mathrm{c}}_{\bbA^{\!1}}}
\dmo{\Spc}{Spc}
\dmo{\Spec}{Spec}
\dmo{\Spech}{\Spec^{h}}
\dmo{\stab}{stab}
\dmo{\Stab}{Stab}
\dmo{\supp}{Supp}
\dmo{\supph}{\supp^{h}}
\dmo{\thick}{thick}
\dmo{\Locname}{Loc}

\nc{\Inj}{\mathrm{Inj}}
\nc{\cat}[1]{\mathscr{#1}}
\nc{\cK}{\cat{K}}
\nc{\cSl}{\mathcal{S}_{\lambda}}
\nc{\cSm}{\mathcal{S}_{\mu}}
\nc{\cL}{\cat{L}}
\nc{\colim}{\mathop{\mathrm{colim}}}
\nc{\cofib}{\mathop{\mathrm{cofib}}}
\nc{\cP}{\cat{P}}
\nc{\cQ}{\cat{Q}}
\nc{\cT}{\cat{T}}
\nc{\CAlg}{\mathrm{CAlg}}
\nc{\Algn}{\mathrm{Alg}_{\bb{E}_n}}
\nc{\eg}{{\sl e.g.}\@\xspace}
\nc{\gp}{\mathfrak{p}}
\nc{\gq}{\mathfrak{q}}
\nc{\hook}{\hookrightarrow}
\nc{\ie}{{\sl i.e.}\@\xspace}
\nc{\into}{\mathop{\rightarrowtail}}
\nc{\inv}{^{-1}}
\nc{\kk}{k}
\nc{\kkG}{\kk G}
\nc{\Loc}[1]{\Locname(#1)}
\nc{\loccit}{{\sl loc.\ cit.}\xspace}
\nc{\Mid}{\,\big|\,}
\nc{\onto}{\mathop{\twoheadrightarrow}}
\nc{\op}{^{\opname}}
\nc{\sminus}{\smallsetminus}
\nc{\potimes}[1]{^{\otimes #1}}
\nc{\sbull}{{\scriptscriptstyle\bullet}}
\nc{\SET}[2]{\big\{\,#1\Mid#2\,\big\}}
\nc{\unit}{\mathbb{1}}
\newcommand{\bb}[1]{\mathbb{#1}}

\newcommand{\mc}[1]{\mathcal{#1}}
\newcommand{\mf}[1]{\mathfrak{#1}}
\newcommand{\mo}[1]{\operatorname{#1}}
\newcommand{\bdot}{\text{\textbullet}}
\nc{\W}{\mathbb{W}}
\nc{\ho}{\mathrm{ho}}
\dmo{\sta}{sta}
\nc{\rsd}[1]{\mathrm{rsd}_{#1}}

\dmo{\End}{End}
\nc{\isoto}{\overset{\sim}{\,\to\,}}
\nc{\isofrom}{\overset{\sim}{\,\leftarrow\,}}
\let\le=\leqslant

\nc{\sto}{\rightsquigarrow}
\nc{\xisoto}[1]{\xrightarrow[\sim]{#1}}
\nc{\xto}[1]{\xrightarrow{#1}}
\nc{\xfrom}[1]{\xleftarrow{#1}}
\nc{\xinto}[1]{\overset{#1}{\,\into\,}}
\nc{\xonto}[1]{\overset{#1}{\,\onto\,}}
\nc{\lto}{\leftarrow}
\nc{\normaleq}{\trianglelefteqslant}

\nc{\normal}{\lhd}
\nc{\normaleop}{\mathop{\mathring{\trianglelefteqslant}}}
\dmo{\chara}{char}%
\dmo{\CoInd}{CoInd}
\dmo{\DMbig}{DM}
\dmo{\id}{id}
\dmo{\Img}{Im}
\dmo{\im}{im}
\dmo{\Komp}{K}
\dmo{\proj}{proj}
\dmo{\rmH}{H}
\dmo{\Res}{Res}
\dmo{\smallb}{b}
\dmo{\geom}{gm}
\dmo{\stabname}{stab}
\dmo{\comp}{comp}
\dmo{\Supp}{Supp}
\dmo{\kosname}{kos}
\dmo{\subname}{Sub}

\nc{\Sub}[1]{\subname_{#1}}
\nc{\Weyl}[2]{{#1}/\!\!/{#2}}
\nc{\WGH}{\Weyl{G}{H}}
\nc{\tInd}{{}^{\otimes\!}\Ind}
\nc{\inn}{;}
\nc{\Vee}[1]{V_{#1}}%
\nc{\Loctens}[1]{\Locname_{\otimes}(#1)}
\nc{\SpcKG}{\Spc(\cK(G))}
\nc{\SpcKGk}{\Spc(\cK(G;\kk))}
\nc{\SpcKE}{\Spc(\cK(E))}
\nc{\SpcA}{\Spc(\A)}
\nc{\ssp}[1]{\{\,#1\,\}}
\nc{\Rall}{\rmH^{\sbull\sbull}}
\nc{\EA}[2]{\mathcal{E}_{#1}(#2)}
\nc{\EApp}[1]{\EA{p}{#1}}

\nc{\kos}[2][]{\kosname_{#1}(#2)}
\nc{\sKG}{\kos[G]{K}}

\nc{\Zp}{\hat{\bbZ}_p}

\nc{\adh}[1]{\overline{#1}}
\nc{\adj}{\dashv}
\nc{\apriori}{{\sl a priori}\xspace}
\nc{\bs}{\backslash}
\nc{\cf}{{\sl cf.}\ }
\nc{\Db}{\Der_{\smallb}}
\nc{\D}{\Der}
\nc{\FFsep}{\overline{\FF}}
\nc{\Fp}{\bbF_{\!p}}
\nc{\gm}{\mathfrak{m}}
\mathchardef\mhyphen="2D
\nc{\ideal}[1]{\langle #1\rangle}
\nc{\Kb}{\Komp_{\smallb}}
\nc{\K}{\Komp}
\nc{\leop}{\mathop{\mathring{\le}}}
\nc{\Lotimes}{\otimes^{\rmL}}
\nc{\Symn}{\mo{Sym}^n}
\nc{\Symm}{\mo{Sym}^m}
\nc{\To}{\Rightarrow}
\nc{\cSpec}{\mathsf{Spec}}
\nc{\Top}{\mathsf{Top}}
\nc{\Sp}{\mathsf{Sp}^{\omega}}
\nc{\ttCat}{2\text{-}\mo{Ring}^{\text{rig}}}
\nc{\ttCatQ}{2\text{-}\mo{Ring}^{\text{rig}}_{\bb{Q}}}
\nc{\FrEinfz}{\mo{Free}_{\bb{E}_{\infty}/\bb{E}_0}}
\nc{\X}[2]{X^{#1,#2}} 
\nc{\An}{\mo{Sym}^{*}(\mc{D}^{b}(\bb{Q}))}
\nc{\A}{\mathcal{O}[X]}
\nc{\Aone}{\bb{A}^1} 
\nc{\base}{\tilde{\AA}^{\infty}_{\QQ[t]}}
\nc{\Apoi}{\bb{A}^{1,+}} 
\nc{\hCob}{\bb{Q}[\mo{hCob}^{1d,or}]_{\eta}}
\nc{\hCobpoi}{\bb{Q}[\mo{hCob}^{1d,or}_{+}]_{\eta}}
\nc{\Speccons}{\mo{Spec}^{\mo{cons}}}
\nc{\AddQ}{\mo{CAlg}(\mo{Add}^{\mo{idem}}_{\bb{Q}})}

\let\theoldbibliography\thebibliography
\newcommand{\Sym}[1][*]{\mo{Sym}^{#1}}
\newcommand{\cS}[1][]{\mathcal{S}_{#1}}
\renewcommand{\thebibliography}[1]{%
	\theoldbibliography{#1}%
	\setlength{\parskip}{0ex}
	\setlength{\itemsep}{0.5ex plus 0.2ex minus 0.2ex}
	\small
}
\apptocmd{\thebibliography}{\raggedright}{}{}

\usepackage{tikz}

\usetikzlibrary{patterns}
\usepackage{tikz-cd}

\usepackage{mathtools}
\usepackage{xspace}
\makeatletter
\let\ea\expandafter
\newcount\foreachcount

\def\foreachLetter#1#2#3{\foreachcount=#1
	\ea\loop\ea\ea\ea#3\@Alph\foreachcount
	\advance\foreachcount by 1
	\ifnum\foreachcount<#2\repeat}

\def\definebb#1{\ea\gdef\csname #1#1\endcsname{\ensuremath{\mathbb{#1}}\xspace}}
\foreachLetter{1}{27}{\definebb}

\makeatother

\date{\today}
\author{Logan Hyslop}

\address{Logan Hyslop, Harvard University Department of Mathematics, 1 Oxford St, Cambridge, MA 02138, United States}
\email{loganrhyslop@gmail.com}
\urladdr{https://loganhyslop.github.io/}

\hypersetup{pdfauthor={Logan Hyslop},pdftitle={A Nilpotence Theorem for Rational Rigid 2-Rings of Moderate Growth}}

\bibliography{Bibliography.bib}
\begin{document}

    \title{A Nilpotence Theorem for Rational Rigid 2-Rings of Moderate Growth }
    
	\begin{abstract}
  In this short note, we prove a general nilpotence theorem for a rational rigid 2-ring all of whose objects satisfy a certain ``moderate growth condition'' inspired from the theory of tensor categories.  This applies in particular to the category of modules over a rational $\bb{E}_{\infty}$-ring, to the derived category of any super-Tannakian category in characteristic zero, and conjecturally to Voevodsky's rational category of mixed motives over a field $\mo{DM}_{\bb{Q}}$.  In fact, we further prove that any such category has enough tt-fields, which can be chosen to be of the form $\mo{Perf}(L)$ for an even 2-periodic field $L$.
	\end{abstract}
    \subjclass[2020]{18F99; 18G80, 55P43, 55U35}
	\keywords{tensor triangular geometry, higher Zariski geometry, moderate growth, nilpotence theorem}
	
	\maketitle
 \vspace{-5pt}
 \begin{figure}[h]
    \includegraphics[scale=0.22]{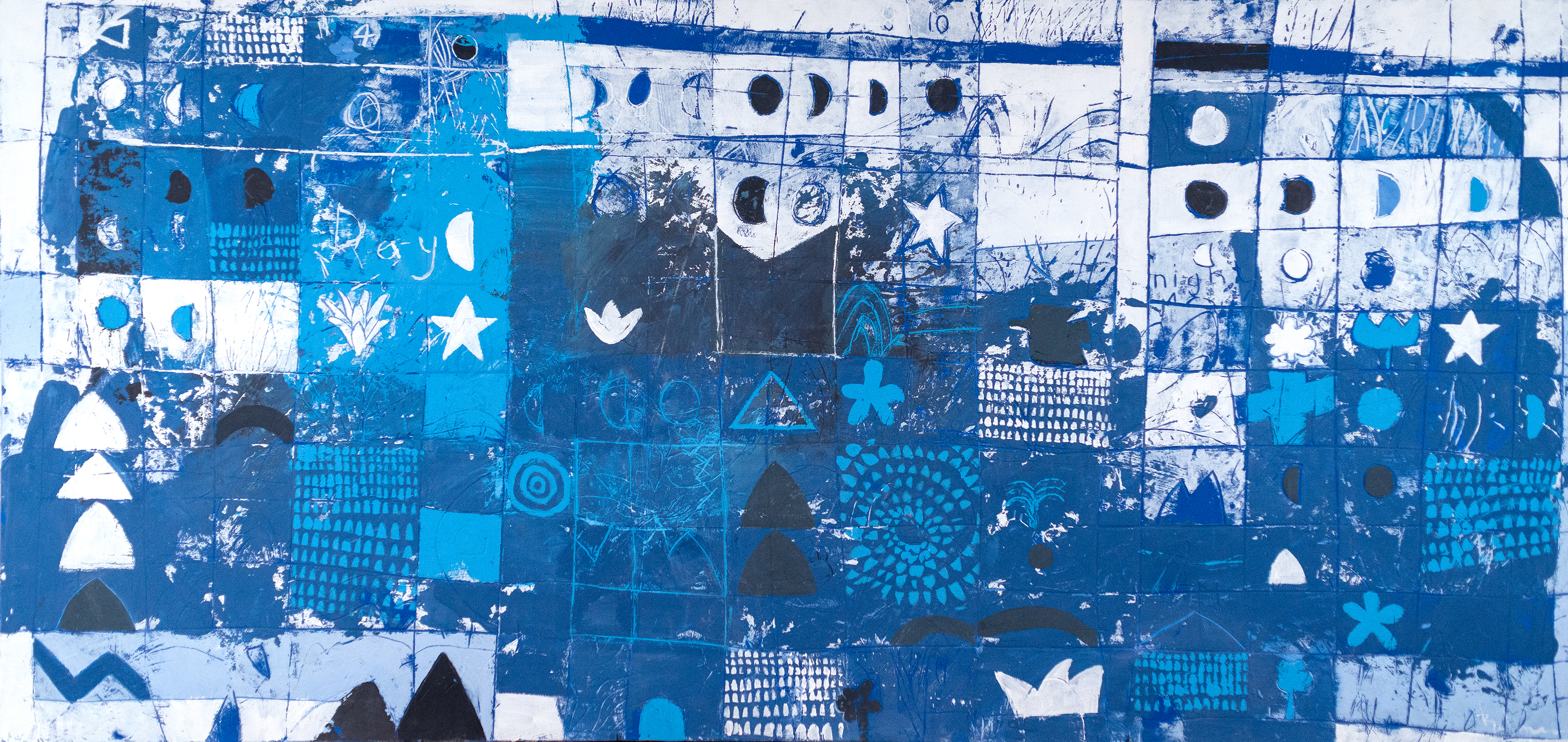}
   \caption[Cover Caption]{\href{https://www.paulbalmer.com/}{Paul Balmer} ``\emph{Between Sky and Ocean}'' 46''x96'' mixed media on canvas\footnotemark}
 \end{figure}
\footnotetext{\url{https://www.paulbalmer.com/PAINTINGS/ABSTRACT-SERIES/2/caption}, photo graciously provided by Paul Balmer\\
(Not to be confused with the mathematician by the same name)}

\tableofcontents

\vspace{-1.4cm}

\newpage
\section{Introduction}
In \cite[Remark~5.15]{balmer2020nilpotence}, Balmer had the nerves of steel to not conjecture that the comparison map from the homological spectrum of a tt-category to its Balmer spectrum is always a bijection, the statement then went on to become known as the ``nerves of steel conjecture.''  In \cite[Theorem~A]{barthel2026geometric}, a counterexample was constructed using free constructions in higher Zariski geometry, closely related to an abelian tensor category introduced by Deligne in \cite{deligne} as an example failing to satisfy nice growth conditions.  In the present paper, we restrict our attention, in a sense, ``away'' from this bad locus, studying only rational rigid 2-rings with objects satisfying some ``mild growth conditions.''  This will be made precise with the notion of a ``Schur-finite'' rational rigid 2-ring (see \Cref{def:SchurFiniteness}).

\subsection{Some brief background}  In an effort to keep this paper brief, we will for the most part assume some knowledge of tensor triangular geometry, particularly surrounding techniques used in Balmer spectra and homological spectra, referring the reader to \cite[\textsection~2]{barthel2026geometric} and the citations therein for relevant background.  With that said, we will however correct a sin from \loccit by way of introducing the following definition, which is used numerous times without being given a name.
\begin{definition}
Say that a rational rigid 2-ring $\cat{C}$ is \textit{Nullstellensatzian-like} (\textit{NS-like} for short) if the unit $\bb{1}_{\cat{C}}$ is a Nullstellensatzian object in the category of commutative algebras $\mo{CAlg}(\mo{Ind}(\cat{C}))$ in $\cat{C}$.
\end{definition}
With this language, one major result from \cite{barthel2026geometric} is that 
\begin{theorem}[{\cite[Proposition~6.7,Theorem~7.1]{barthel2026geometric}}]\label{thm:GeoPoints}
The homological spectrum of a rational rigid 2-ring $\cat{C}$ can be described with points given by (equivalence classes of) maps into NS-like objects, which act as a stable enhancement of homological residue fields.  Moreover, any such choice of ``enhanced homological residue field'' $\cat{L}$ may be chosen so that the image of $\cat{C}\to\cat{L}$ generates the target.
\end{theorem}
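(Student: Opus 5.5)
Since this statement is quoted from \cite{barthel2026geometric}, the plan is to reconstruct its proof from the general machinery recalled there, in three steps: build an enhanced residue field from a homological prime, check that the assignment is a bijection, and then cut down the target so that the image generates.

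\textbf{Step 1: from a homological prime to an NS-like target.} Recall that a point of the homological spectrum $\Spech(\cat{C})$ is a maximal Serre $\otimes$-ideal $\mathcal{B}$ of the module category $\mathrm{mod}\text{-}\cat{C}$. Each such point is the kernel of $\widehat{\mathrm{h}}(-)\otimes \bar{E}$ for a pure-injective object $E\in\mo{Ind}(\cat{C})$, and $E$ may be taken to be a weak ring. Rationally, weak rings can be improved: one takes a suitable algebra $A$ whose homological support detects $\mathcal{B}$, for instance a free commutative algebra quotiented so as to kill the relevant maps, and uses the idempotent-completion and commutative-algebra methods of higher Zariski geometry to obtain an honest object of $\mo{CAlg}(\mo{Ind}(\cat{C}))$. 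Any nonzero commutative algebra admits a map to a Nullstellensatzian object, by a Zorn-type or small-object argument in the compactly generated category $\mo{CAlg}(\mo{Ind}(\cat{C}))$. So we obtain $A\to L$ with $L$ Nullstellensatzian. Then $\cat{L}=\mo{Mod}_L(\mo{Ind}(\cat{C}))^{\omega}$ is NS-like, since its unit $L$ is Nullstellensatzian in the corresponding algebra category.

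\textbf{Step 2: the assignment is a well-defined bijection.} Next we show that for an NS-like $\cat{L}$, the kernel of $\mathrm{mod}\text{-}\cat{C}\to \mathrm{mod}\text{-}\cat{L}$ is a maximal Serre ideal, that is, a homological prime. The key input is that NS-like categories behave like fields. Every nonzero map out of the unit detecting a nonzero homology class is split after passing to a further algebra, and by the Nullstellensatzian property that further algebra cannot be a proper extension. Hence $\Spech(\cat{L})$ is a single point. Conversely, two NS-like targets induce the same prime exactly when they admit a common NS-like refinement; this is taken as the definition of the equivalence relation. Checking that it is an equivalence relation, in particular transitivity, uses pushouts $L\otimes_{\bb{1}} L'$ being nonzero whenever the primes agree. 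I expect this to be the main obstacle: nonvanishing of the tensor product of two enhanced fields with the same kernel. I would attack it via the homological tensor-product formula $\ker(\bar E)\ni \bar{E}\otimes\bar{E}'$. This gives a nonzero homology class in $L\otimes L'$, because both $L$ and $L'$ detect the same maximal Serre ideal, which is proper.

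\textbf{Step 3: making the image generate.} Finally, we replace $\cat{L}$ by the thick subcategory generated by the image of $\cat{C}$. This subcategory is a rigid 2-ring. To see that it is still NS-like, note that its Ind-category is a retract of $\mo{Ind}(\cat{L})$ via the right adjoint, which preserves commutative algebras. A nontrivial map out of its unit would therefore induce one in $\cat{L}$ after base change. The homological prime is unchanged, because the kernel is computed on $\cat{C}$.
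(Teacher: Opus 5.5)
\textbf{Gap: the existence half of the cited identification is not proved.} Steps 1 and 2 try to re-prove $\Spech(\cat{C})\simeq\Speccons_{\mo{CAlg}(\mo{Ind}(\cat{C}))}(\bb{1}_{\cat{C}})$ from \cite[Theorem~7.1, Proposition~6.7]{barthel2026geometric}. The paper simply cites this, and your reconstruction omits the decisive step.

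In Step 1 you need, for a given maximal Serre $\otimes$-ideal $\mathcal{B}$, a nonzero commutative algebra all of whose Nullstellensatzian quotients induce $\mathcal{B}$. But $E_{\mathcal{B}}$ is only a weak ring, with no $\bb{E}_{\infty}$-multiplication available, so ``rationally, weak rings can be improved'' is not an argument.

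A workable version of your idea goes as follows. Let $A$ be the filtered colimit of the compact algebras $A_S$ that universally kill finite sets $S$ of maps $f$ of $\cat{C}$ with $\mathrm{im}\,\hat{h}(f)\in\mathcal{B}$. For any Nullstellensatzian $L$ under $A$, the kernel of $\mo{mod}\text{-}\cat{C}\to\mo{mod}\text{-}\mo{Perf}_{\cat{C}}(L)$ is a proper Serre ideal containing $\mathcal{B}$. By maximality it equals $\mathcal{B}$.

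Everything then hinges on $A_S\neq 0$, and your sketch gives no reason for it. The universal property of $A_S$ only produces maps into commutative algebras, and you have none known to see $\mathcal{B}$. This nonvanishing, equivalently surjectivity of the constructible spectrum onto $\Spech(\cat{C})$, is precisely what the cited theorem supplies.

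\textbf{Step 2 has the same defect in a milder form.} Your claim that every nonzero map out of the unit splits over a further algebra is only available for maps that are not $\otimes$-nilpotent. For $f\colon\bb{1}\to y$, consider the compact algebra $\mo{Sym}(y)\otimes_{\bb{1}[t]}\bb{1}$, with $t$ acting through $f$ on $\mo{Sym}(y)$ and by $1$ on $\bb{1}$. It is nonzero exactly when $f$ is not $\otimes$-nilpotent. So the NS property gives a split/nilpotent dichotomy, and you still have to get from there to maximality of the kernel in $\mo{mod}\text{-}\cat{C}$.

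Likewise, declaring the equivalence relation to be ``common refinement'' does not prove injectivity. You must show $L\otimes L'\neq 0$ whenever $L$ and $L'$ induce the same prime. The one-line appeal to a tensor formula for homological supports of the big objects $L,L'$ does not do this as written.

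\textbf{Step 3 is correct, but not needed.} Compact nonzero algebras push forward along the fully faithful inclusion, and retractions onto $\bb{1}$ transpose back along the right adjoint, which fixes $\bb{1}$. However, your own choice $\mo{Mod}_L(\mo{Ind}(\cat{C}))^{\omega}=\mo{Perf}_{\cat{C}}(L)$ is generated by the objects $L\otimes c$ by definition. That is exactly how the paper obtains generation once the identification above is quoted. If you replace Steps 1--2 by that citation, your argument becomes the paper's.
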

\begin{proof}
By \cite[Theorem~7.1]{barthel2026geometric} and \cite[Proposition~6.7]{barthel2026geometric}, for $\cat{C}$ a rational rigid 2-ring, $\Spech(\cat{C})\simeq \Speccons_{\mo{CAlg}(\mo{Ind}(\cat{C}))}(\bb{1}_{\cat{C}})$, where homological residue fields are detected by (equivalence classes) of Nullstellensatzian commutative algebras $L\in \mo{CAlg}(\mo{Ind}(\cat{C}))$.  Setting $\cat{L}\coloneqq \mo{Perf}_{\cat{C}}(L)$ for any choice of representative $L$ yields the claim.
\end{proof}

\subsection{Layout of the paper}  In \textsection 2, we provide some relevant background on partitions and Schur functors which will be useful for the definition of what precisely we mean by rational rigid 2-ring with ``moderate growth.''  In \textsection 3, we make precise this notion via the definition of a \textit{Schur-finite} rational rigid 2-ring in \Cref{def:SchurFiniteness}, and then we provide a proof of a criterion (\Cref{thm:genimpliesall}) that makes it easy to check when a rational rigid 2-ring is Schur-finite, allowing us to provide a great many examples \Cref{prop:omnibusofExamples}.

In \textsection 4, we prove the main theorem of the paper using techniques motivated from Deligne's work in \cite{deligne2002categories} on symmetric tensor categories of moderate growth in characteristic 0.  To be precise, we prove:
\begin{theorem}[\Cref{cor:EnoughttFields}; \Cref{thm:NosSchurFinite}]
Let $\cat{C}$ be a Schur-finite rational rigid 2-ring.  Then $\cat{C}$ has enough tt-fields in the sense of \cite{balmer2019tensor}, which can all be chosen to be of the form $\mo{Perf}(L)$ for an even 2-perioidic field $L$.  Furthermore, the nerves of steel condition holds for $\cat{C}$, which is to say that the comparison map induces a bijection between the homological spectrum of $\cat{C}$ and its Balmer spectrum.
\end{theorem}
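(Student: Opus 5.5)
The plan is to reduce both claims to a structural statement about NS-like rational rigid 2-rings. \Cref{thm:GeoPoints} identifies points of $\Spech(\cat{C})$ with maps $\cat{C}\to\cat{L}$, where $\cat{L}=\mo{Perf}_{\cat{C}}(L)$ is NS-like and is generated by the image of $\cat{C}$. So the core step is:

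\emph{If $\cat{L}$ is an NS-like rational rigid 2-ring that is Schur-finite, then $\cat{L}\simeq\mo{Perf}(K)$ for an even 2-periodic field $K$.}

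First I would check that Schur-finiteness passes to $\cat{L}$. Every $\Sym^\lambda$ commutes with symmetric monoidal functors, so the relevant Schur functors vanish on the image of $\cat{C}$. The criterion \Cref{thm:genimpliesall} (Schur-finiteness of generators implies it for everything) then transports the property to all of $\cat{L}$.

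Next, in the spirit of Deligne's theorem on tensor categories of moderate growth, I would show that every object $x$ of an NS-like Schur-finite $\cat{L}$ splits as a sum of even and odd shifts of the unit.
\begin{itemize}
\item Schur-finiteness gives some $\Sym^\lambda x=0$, with $\lambda$ a rectangle contained in an $(m+1)\times(n+1)$ hook, say.
\item Imitating Deligne, I would form the commutative algebra in $\mo{Ind}(\cat{L})$ that universally splits off a ``rank one even'' or ``rank one odd'' summand of $x$. Concretely, this is the free algebra on $x^\vee$ modulo the relation forcing $\Sym^{2}$ or $\Lambda^{2}$ of the tautological line to vanish.
\item Schur-finiteness, via the super dimension (hook) bound, should be used to show this algebra is nonzero.
\item The Nullstellensatzian property of $\unit_{\cat{L}}$ means every nonzero commutative algebra admits a section back to the unit. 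So $\unit$ already splits $x$, and induction on the hook size finishes.
\end{itemize}

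Granting this splitting, $\cat{L}$ is generated by $\unit$, so $\cat{L}\simeq\mo{Perf}(\End(\unit))$ with $K=\End(\unit)$ a rational $\bb{E}_\infty$-ring. The NS-like property of $\mo{Mod}_K$ then forces $K$ to be a Nullstellensatzian $\bb{E}_\infty$-$\bb{Q}$-algebra. Rationally, free algebras are polynomial on even generators and exterior on odd ones, so killing elements shows $\pi_*K$ is a graded field. Killing odd homotopy and adjoining invertible degree-2 classes then shows $K$ is even 2-periodic with $\pi_0K$ an (algebraically closed) field.

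The conclusions follow from this. Every point of $\Spech(\cat{C})$ is detected by some $\cat{C}\to\mo{Perf}(K)$, and each $\mo{Perf}(K)$ is a tt-field, which gives enough tt-fields. For nerves of steel, I would show the comparison map is injective. Two points over the same Balmer prime $\mathcal{P}$ give tt-fields $\mo{Perf}(K_1)$ and $\mo{Perf}(K_2)$ with the same kernel $\mathcal{P}$. The tensor product $K_1\otimes_{\cat{C}}K_2$ is a commutative algebra in $\mo{Ind}(\cat{C})$, and it should be nonzero: over a field, an object is detected by its kernel, so this is a Künneth-type argument in $\mo{Perf}(K_1)$. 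It therefore maps to a common Nullstellensatzian refinement, so the two points coincide in $\Speccons$.

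The main obstacle is the splitting step: proving that the universal splitting algebra is nonzero using only Schur-finiteness. Deligne relied on abelianness and semisimplicity here; in the stable setting I would first try to control it through the vanishing of a specific Schur functor and an induction on the hook.
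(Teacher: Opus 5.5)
\textbf{Splitting step.} Your reduction matches the paper's: you represent points by NS-like $\cat{L}=\mo{Perf}_{\cat{C}}(L)$, transport Schur-finiteness to $\cat{L}$ via \Cref{thm:genimpliesall}, and identify a unit-generated NS-like $\cat{L}$ with $\mo{Perf}(K)$ for a Nullstellensatzian $K$.

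The splitting step you flag as the main obstacle is, however, the real content of the first half, and the mechanism you propose cannot work. Schur-finiteness never makes a splitting algebra of a prescribed parity nonzero. For example, $\Sigma\bb{1}$ is Schur-finite, yet $\mo{Sym}^2(\Sigma\bb{1})=0$, so $\bb{1}$ splits off $\Sigma\bb{1}$ over no nonzero algebra. In addition, a quotient of $\mo{Sym}(x^\vee)$ only gives a map $A\to x\otimes A$, with no retraction. And the NS-like property only provides sections of \emph{compact} nonzero algebras, not of all nonzero ones.

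The paper supplies two missing ingredients:
(i) Take $A=\mo{Sym}(M\oplus M^\vee)[\delta^{-1}]$, with $\delta$ the image of $\mo{coev}_M$. This $A$ is compact, $A$ splits off $A\otimes M$, and $A\neq 0$ if and only if $\mo{Sym}^n(M)\neq 0$ for all $n$. The last point holds because the unit of $A$ is the colimit of the maps $\mo{coev}_{\mo{Sym}^n(M)}$ and $\bb{1}$ is compact (\Cref{thm:CompactCAlgLocallySplit}). Applying this to $\Sigma M$, using $\mo{Sym}^n(\Sigma M)\simeq\Sigma^n\bigwedge^n M$, handles the odd case.
(ii) If some $\mo{Sym}^n(M)$ and some $\bigwedge^m(M)$ both vanish, then every $\mathcal{S}_\lambda(M)$ with $|\lambda|\geq mn$ vanishes. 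Hence $M^{\otimes mn}=0$, and rigidity forces $M=0$ (\Cref{lem:KillSymandAlt}).

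Point (ii) is what guarantees that a nonzero $M$ admits one of the two splittings. Schur-finiteness is used only to bound the number of iterations, via \Cref{prop:SumsofUnitShift}.

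\textbf{Nerves of steel.} Here there is a second gap. Write $L_i\in\mo{CAlg}(\mo{Ind}(\cat{C}))$ for the algebras with $\mo{Perf}_{\cat{C}}(L_i)\simeq\mo{Perf}(K_i)$. The claim that $L_1\otimes L_2\neq 0$ is not a K\"unneth-type fact; it is equivalent to the injectivity you are trying to prove.

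Equal Balmer primes only say that $F_1$ and $F_2$ kill the same compact \emph{objects}. Since $\bb{1}$ and $K_1$ are compact, $F_1(L_2)\neq 0$ says something different: $F_1(\eta)\neq 0$ for every $\eta\colon\bb{1}\to c$ ($c\in\cat{C}$) through which the unit of $L_2$ factors. So you need $F_1$ and $F_2$ to kill the same \emph{morphisms} out of $\bb{1}$. The phrase ``over a field an object is detected by its kernel'' gives no reason for this.

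The missing idea is the numerical argument at the end of the paper's proof of \Cref{thm:NosSchurFinite}. Take a fiber sequence $Y\xrightarrow{g}\bb{1}\xrightarrow{f}X$.
\begin{itemize}
\item The type $\bb{1}^{\oplus a}\oplus\Sigma\bb{1}^{\oplus b}$ of $F_i(X)$ is read off from which $\mathcal{S}_\lambda(X)$ lie in $\mathcal{P}$ (\Cref{prop:SumsofUnitShift}), and likewise for $Y$.
\item In $\mo{Perf}(K_i)$, the map $F_i(f)$ is either zero or split mono. These two cases give $F_i(Y)\simeq\bb{1}^{\oplus b+1}\oplus\Sigma\bb{1}^{\oplus a}$ and $F_i(Y)\simeq\bb{1}^{\oplus b}\oplus\Sigma\bb{1}^{\oplus a-1}$ respectively.
\item Hence whether $F_i(f)=0$ depends only on $\mathcal{P}$.
\end{itemize}

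With this inserted, your route closes: $F_2(\eta)\neq 0$ for all such $\eta$ because $L_2\otimes L_2\neq 0$, so $F_1(\eta)\neq 0$ as well. The result would be more direct than the paper's proof, which verifies exact-nilpotence in local categories via \Cref{cor:explicitpresentation}, \Cref{lem:dimvsPart} and the prime classification \Cref{thm:nonrigidGrandLine}. As written, though, this step is unsupported.
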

By \Cref{prop:omnibusofExamples}, this theorem applies in particular to the category $\mo{Perf}(R)$ of perfect modules over a rational $\bb{E}_{\infty}$-ring $R$, greatly improving upon the partial results about the nerves of steel condition in the unit generated case from \cite[Theorem~1.4]{mathew2017residue}, as well as expanding the collection of rational rigid 2-rings which have enough tt-fields from \cite[Theorem~B]{barthel2026geometric}.  

This theorem applies to stable sub-categories of Voevodsky's derived category of motives over a field $\mo{DM}_{\bb{Q}}$ (with rational coefficients) generated by Schur-finite motives, and conjecturally to the entire category $\mo{DM}_{\bb{Q}}$ (see \cite{mazza2004schur}).  

Assuming the existence of a motivic t-structure, the full computation of the homological spectrum of $\mo{DM}_{\bb{Q}}$ was carried out in \cite{gallauer2021note}.  The theorem also extends results from \cite{hamil2025homological} to give nerves of steel for stable categories of Lie superalgebra representations over $\bb{C}$ without any assumptions on detecting subalgebras.

Along the way, we obtain more results about the structure of NS-like rational rigid 2-rings, proving
\begin{proposition}[\Cref{cor:NSLikeFields,cor:ResidueFieldSchurFinite}]
An NS-like rational rigid 2-ring $\cat{L}$ is a tt-field in the sense of \cite{balmer2019tensor} if and only if it is Schur finite, which in turn holds if and only if $\cat{L}\simeq\mo{Perf}(L)$ for some algebraically closed even 2-periodic field $L$.
\end{proposition}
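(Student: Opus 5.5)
We prove the cycle of implications: $\cat{L}\simeq\mo{Perf}(L)$ implies that $\cat{L}$ is a tt-field, which implies that it is Schur-finite, which implies that $\cat{L}\simeq\mo{Perf}(L)$.

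\textbf{$\mo{Perf}(L)$ is a tt-field and Schur-finite.} If $\cat{L}\simeq\mo{Perf}(L)$ for an even 2-periodic field $L$, so that $\pi_*L\cong k[u^{\pm1}]$ with $|u|=2$ and $k$ a field, then every perfect module splits as a finite sum of copies of $L$ and $\Sigma L$. Hence every object is a sum of shifts of the unit, and every nonzero map out of the unit is a split monomorphism. This is exactly Balmer's tt-field axiom. Schur-finiteness follows from the criterion \Cref{thm:genimpliesall}: $\cat{L}$ is generated by $\unit$, and $\unit$ is annihilated by any Schur functor $\mathcal{S}_\lambda$ with $\lambda$ having more than one column. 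Alternatively, we can appeal directly to \Cref{prop:omnibusofExamples}, since $L$ is a rational $\bb{E}_\infty$-ring.

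\textbf{tt-field implies Schur-finite.} In a tt-field every object is a finite sum of shifts of the unit and of ``$\otimes$-invertible field objects''. Since $\cat{L}$ is rational, the subtle part is controlling the dimensions of those summands. The plan is to use that $\mo{End}(\unit)$ is a graded field, so that categorical dimensions lie in a field of characteristic zero. I would then show that for each object $X$, some exterior or symmetric power vanishes. Concretely, decompose $X\simeq\unit^{a}\oplus\Sigma\unit^{b}$ by using the tt-field splitting. The power $\Lambda^{a+1}$ of the even part kills it, $\mo{Sym}^{b+1}$ kills the odd part, and so the Schur functor $\mathcal{S}_{\lambda}$ for the $(a+1)\times(b+1)$ rectangle kills $X$. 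For this route I need that an object of a tt-field with nonzero unit-maps is built from shifts of $\unit$. I would deduce this from NS-likeness. The unit is Nullstellensatzian, so every nonzero commutative algebra in $\mo{Ind}(\cat{L})$ admits a map back to $\unit$. Applying this to $\mo{Sym}^*(X^\vee)$-type algebras, or to $\mo{End}(X)$, produces nonzero maps $X\to\Sigma^i\unit$.

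\textbf{Schur-finite implies $\mo{Perf}(L)$ (main obstacle).} This follows Deligne's argument from \cite{deligne2002categories}. Given Schur-finite $X$ killed by $\mathcal{S}_\lambda$, I would build the ``fiber functor'' algebra $A=\mo{Sym}^*$ of the universal splitting, i.e.\ the commutative algebra in $\mo{Ind}(\cat{L})$ over which $X$ becomes a sum $\unit^{m}\oplus\Sigma\unit^{n}$. Deligne's key lemma says this algebra is nonzero exactly when $\lambda$ fits in the $m\times n$ hook, and Schur-finiteness guarantees some such $(m,n)$ works. Since $\unit$ is Nullstellensatzian, $A\neq0$ yields a section $A\to\unit$. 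Hence $X$ itself splits as $\unit^m\oplus\Sigma\unit^n$ in $\cat{L}$.

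So every object is a sum of shifts of $\unit$, and $\cat{L}\simeq\mo{Perf}(L)$ with $L=\mo{End}(\unit)$. We then need to show that $L$ is an even 2-periodic algebraically closed field, which I would again obtain from NS-likeness:
\begin{itemize}
\item Any nonzero homogeneous element of $\pi_*L$ must be invertible, since otherwise its cofiber is a nonzero algebra whose Nullstellensatzian splitting contradicts the assumption. So $\pi_*L$ is a graded field.
\item Odd elements are excluded by the same argument applied to $\mo{Sym}$ of an odd class.
\item Periodicity comes from adjoining a square root of a degree-2 unit.
\item Algebraic closedness of $\pi_0$ comes from adjoining roots of polynomials.
\end{itemize}

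The hard step is the nonvanishing of Deligne's splitting algebra in this stable, $\infty$-categorical setting. I expect to transport Deligne's combinatorial proof by working with the induced $\Sigma_n$-representations on $\pi_0\mo{End}(X^{\otimes n})$, using rationality to decompose via Young symmetrizers.
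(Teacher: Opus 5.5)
\textbf{There are genuine gaps.} Your easy implication ($\mo{Perf}(L)$ is a semisimple tt-field and is Schur-finite) is fine apart from one slip, noted at the end. The decisive step, Schur-finite $\Rightarrow$ $\mo{Perf}(L)$, is left open, and the ``key lemma'' you quote points the wrong way. Say $\lambda$ fits in the $(m,n)$-hook, meaning $(m+1,n+1)\notin[\lambda]$. Then \Cref{prop:SumsofUnitShift} gives $\mathcal{S}_{\lambda}(\bb{1}^{\oplus m}\oplus\Sigma\bb{1}^{\oplus n})\neq 0$. So if $\mathcal{S}_{\lambda}(X)=0$, the universal algebra over which $X$ becomes $\bb{1}^{\oplus m}\oplus\Sigma\bb{1}^{\oplus n}$ must \emph{vanish}. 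Hook conditions only ever give obstructions. That some $(m,n)$ admits a nonzero splitting algebra is exactly what has to be proved, and Schur-finiteness alone does not hand it to you.

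The missing idea, which is how the paper proceeds (\Cref{thm:CompactCAlgLocallySplit}), is to split off one unit at a time:
\begin{enumerate}
\item Put $B=\mo{Sym}(M\oplus M^{\vee})$, and let $\delta\in\pi_0B$ be the image of $\mo{coev}_M$. The composite $B\to B\otimes M\to B$ is multiplication by $\delta$, so $\bb{1}$ splits off $M$ over the compact algebra $A=B[\delta^{-1}]$.
\item $A\neq 0$ provided $\mo{Sym}^n(M)\neq 0$ for all $n$. This is because $\bb{1}$ is compact and $\delta^n$ is $\mo{coev}_{\mo{Sym}^n(M)}$ up to a nonzero rational.
\item In NS-like $\cat{L}$, the section $A\to\bb{1}$ turns this into an honest splitting $M\simeq\bb{1}\oplus M'$ (\Cref{lem:NSlikeLocIsoIsIso}).
\item Applying the same to $\Sigma M$, using $\mo{Sym}^n(\Sigma M)\simeq\Sigma^n\bigwedge^n(M)$, splits off $\Sigma\bb{1}$.
\item Schur-finiteness bounds the number of steps via \Cref{prop:SumsofUnitShift}.
\item The remainder is killed by some $\mo{Sym}^n$ and some $\bigwedge^m$, hence is zero by \Cref{lem:KillSymandAlt}.
\end{enumerate}

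Your step tt-field $\Rightarrow$ Schur-finite has a second gap. The tt-field axioms give Krull--Schmidt decompositions but say nothing about which indecomposables occur, so ``decompose $X\simeq\bb{1}^{a}\oplus\Sigma\bb{1}^{b}$ by the tt-field splitting'' is unjustified. The mechanism you propose cannot supply it:
\begin{enumerate}
\item An algebra map $\mo{Sym}^*(X^\vee)\to\bb{1}$ is just a map $X^\vee\to\bb{1}$. The zero map always exists, so the Nullstellensatz extracts nothing from this algebra.
\item $\mo{End}(X)$ is not commutative.
\item A nonzero map $X\to\Sigma^i\bb{1}$ need not split.
\end{enumerate}
The paper argues by contraposition (\Cref{cor:NSLikeFields}). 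If $M$ is not Schur-finite, then all $\mo{Sym}^n(M)\neq0$. The complement left after splitting off a unit is still not Schur-finite, since Schur-finite objects are closed under sums. So the splitting theorem plus NS-likeness gives $M\simeq\bb{1}^{\oplus n}\oplus M'_n$ for every $n$. This contradicts the Krull--Schmidt property of tt-fields \cite[Theorem~5.7]{balmer2019tensor}.

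With the one-step splitting theorem in hand, your direct route can also be repaired. An indecomposable $X$ with all symmetric powers nonzero is $\bb{1}$. One with all exterior powers nonzero is $\Sigma^{-1}\bb{1}\simeq\Sigma\bb{1}$, using the degree-$2$ unit that NS-likeness provides. Otherwise some symmetric and some exterior power of $X$ vanish, so $X=0$ by \Cref{lem:KillSymandAlt}.

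Finally, the slip in your first implication: $\mo{Sym}^n(\bb{1})\simeq\bb{1}$, so $\bb{1}$ is killed by $\mathcal{S}_{\lambda}$ when $\lambda$ has more than one \emph{row} (e.g.\ $\bigwedge^2$), not more than one column.
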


Finally, in the Appendix \textsection A, we compute the Balmer spectrum of the ``non-rigid tt-affine line,'' a category that was used in \cite[\textsection~2]{hyslop2025towards} when discussing non-rigid examples where the so-called exact-nilpotence condition fails.

\subsection{Conventions and notation}  We will most part follow the same terminology and conventions as in \cite{barthel2026geometric}.  In particular, we freely use the language $\infty$-categories and higher algebra as developes in \cite{HTT} and \cite{HA}.  For other specific pieces of terminology.
\begin{enumerate}
\item  We use the term ``(rigid) 2-ring'' to mean an essentially small (rigid) idempotent complete stably symmetric monoidal $\infty$-category.  If $\cat{C}$ is a rigid 2-ring, we use $\bb{1}_{\cat{C}}$ to denote the endomorphism $\bb{E}_{\infty}$-ring of the unit of $\cat{C}$ considered as an $\bb{E}_{\infty}$-ring spectrum.
\item  We assume familiarity with the homological spectrum and Balmer spectrum of a tensor triangulated category, as developed in \cite{balmer2020nilpotence} and \cite{Balmer2005} (respectively).  In line with this $\supph$ will denote the homological support (either naive or genuine, which will agree since we only apply this notion to small objects and to algebras) of an object (see \cite{balmerhomological}).
\item  We freely use the theory of Nullstellensatzian objects and constructible spectra, introduced in \cite[Appendix~A]{2022arXiv220709929B}, studied in the current generality in \cite[Part~II]{barthel2026geometric}.
\item  The nerves of steel condition for a rigid 2-ring $\cat{C}$ says that the comparison map between its homological spectrum and Balmer spectrum is an equivalence.  We refer the reader to \cite[Theorem~1.6]{hyslop2025towards} and \cite[Theorem~A.1]{balmerhomological} for equivalent conditions in terms of the so-called exact-nilpotence condition, which will be used without reference later.
\item  For an object $X$ in a rational rigid 2-ring $\cat{C}$, the cobordism hypothesis (\cite{luriecob},\cite{harpaz}) gives a circle action on the dimension of $X$, which is tracked by invariants we call $\mo{dim}(X)_{2i}\in \pi_{2i}(\bb{1}_{\cat{C}})$ for $i\geq 0$.  For $i=0$, $\mo{dim}(X)_0$ is the usual dimension of $X$, tracked by the map $\bb{1}\to X\otimes X^{\vee}\to \bb{1}$.  There is an explicit presentation for $\mo{dim}(X)_{2i}$ as a map factoring over a complex generalizing ``$X\otimes X^{\vee}$'' which can be constructed using a kind of ``cyclic bar construction,'' although we do not need this so we will not pursue it in this paper, but will return to it in future work.
\end{enumerate}

\subsection*{Acknowledgements}  I would like to thank Paul Balmer (the mathematician) for originally setting me down the route leading to the current results, as well as Paul Balmer (the artist) for granting me permission to use his work as cover art for the present paper.  I would like to thank Maxime Ramzi and Tobias Barthel, as the main results of this paper arise essentially as a corollary of the machinery that we jointly developed in \cite{barthel2026geometric}.  Further, I would like to thank Thibault D\'{e}coppet and Martin Gallauer for useful conversations while writing this paper, and Pavel Galashin for an email conversation years ago relevant to the computation of the Balmer spectrum of $\An$.

I would like to thank Aden Shaw for comments on previous versions of the draft.

\newpage
\section{Background on Schur Functors}
In this section, we will recall some standard techniques from representation theory which are the basis of this paper.
\subsection{Recollection on partitions}  There is a rich dictionary between partitions of integers and representations of symmetric groups.  We will outline some of the facts about partitions we use in the sequel, but for the most part will only refer here to other work where the reader can learn more about this story, which can be found in any standard reference such as \cite{james2006representation}.  Recall that
\begin{definition}  A \textit{partition (of n)} $\lambda$ is a collection $\lambda=(\lambda_1,\lambda_2,\ldots)$ of non-negative integers such that 
\begin{itemize}
\item $\lambda_1\geq \lambda_2\geq \ldots$,
\item $\lambda_i=0$ for $i\gg 0$, and
\item $n=|\lambda|\coloneqq \sum_{i\geq 1}\lambda_i$.
\end{itemize}
We say that the \textit{length} of $\lambda$ is the largest $j$ such that $\lambda_j\neq 0$.  
\end{definition}
\begin{fact}[{\cite[Theorem~4.12]{james2006representation}}]
For any natural number $n$, there is an explicit bijection between partitions $\lambda$ of $n$ and irreducible representations of the symmetric group $\Sigma_n$ over a field of characteristic zero.
\end{fact}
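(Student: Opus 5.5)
The plan is to prove the standard bijection between partitions of $n$ and irreducible $\Sigma_n$-representations over a field $k$ of characteristic zero. I would do it in three steps: count, construct, and then check distinctness.

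\textbf{Counting.} By Maschke's theorem, $k\Sigma_n$ is semisimple, since $n!$ is invertible in $k$. The characters of $\Sigma_n$ are rational-valued, because $\sigma$ and $\sigma^m$ are conjugate whenever $\gcd(m,\operatorname{ord}\sigma)=1$. Hence $\mathbb{Q}$ is a splitting field. Therefore the number of irreducible representations over $k$ equals the number of conjugacy classes of $\Sigma_n$. Conjugacy classes are determined by cycle type, and cycle types are exactly partitions of $n$. So the two sets have the same cardinality.

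\textbf{Construction.} For a partition $\lambda$, fix a Young tableau $T$ of shape $\lambda$, and let $R_T$ and $C_T$ be its row and column stabilizers. Define the row symmetrizer $a_\lambda=\sum_{\sigma\in R_T}\sigma$ and the column antisymmetrizer $b_\lambda=\sum_{\tau\in C_T}\operatorname{sgn}(\tau)\tau$. The Young symmetrizer is $c_\lambda=a_\lambda b_\lambda$, and I set $S^\lambda=k\Sigma_n c_\lambda$.

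The key combinatorial lemma, due to von Neumann, concerns a permutation $g$ not in $R_TC_T$. For such $g$, there exist two entries lying in the same row of $T$ and the same column of $gT$. From this one deduces that $a_\lambda\, x\, b_\lambda\in k\cdot c_\lambda$ for every $x\in k\Sigma_n$. It follows that $c_\lambda^2=n_\lambda c_\lambda$ with $n_\lambda\neq 0$. Comparing the trace of right multiplication on $k\Sigma_n$ gives $n_\lambda=n!/\dim S^\lambda$. Hence $c_\lambda/n_\lambda$ is an idempotent, and $\operatorname{End}(S^\lambda)\cong c_\lambda k\Sigma_n c_\lambda=k$. Since the algebra is semisimple, this means $S^\lambda$ is (absolutely) irreducible.

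\textbf{Distinctness.} This is the main obstacle. Order partitions lexicographically and suppose $\lambda>\mu$. The same pigeonhole argument on tableaux shows that for every $g$, some pair of entries lies in a common row of $T_\lambda$ and a common column of $gT_\mu$. Consequently $a_\lambda\, g\, b_\mu=0$, and so $c_\lambda\, k\Sigma_n\, c_\mu=0$. This gives $\operatorname{Hom}(S^\lambda,S^\mu)=0$, so $S^\lambda\not\cong S^\mu$ for $\lambda\neq\mu$.

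The resulting map $\lambda\mapsto S^\lambda$ is therefore an injection from partitions of $n$ to irreducibles. By the count in the first step, it is a bijection. Since $c_\lambda$ has rational coefficients, the construction is explicit and valid over any field of characteristic zero.
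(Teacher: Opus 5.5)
The paper gives no argument of its own here; it simply cites James. James's proof builds Specht modules spanned by polytabloids inside the permutation module $M^\lambda$ on $\lambda$-tabloids, and applies the Submodule Theorem for the $\Sigma_n$-invariant bilinear form on $M^\lambda$. Over $\mathbb{Q}$ that form is positive definite, which forces $S^\lambda\cap S^{\lambda\perp}=0$ and hence absolute irreducibility. A dominance-order lemma on homomorphisms $S^\lambda\to M^\mu$ then separates the $S^\lambda$, and the conjugacy-class count gives exhaustiveness.

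Your route via Young symmetrizers and von Neumann's lemma (as in Fulton--Harris) is the other classical proof, and apart from one slip in Step~1 it is correct. It yields the same modules up to isomorphism, with the paper's labelling ($(n)\mapsto$ trivial, $(1)^n\mapsto\mathrm{sgn}$). What it buys you is an explicit primitive idempotent $c_\lambda/n_\lambda\in\mathbb{Q}[\Sigma_n]$ with $n_\lambda=n!/\dim V_\lambda$. That is exactly the kind of minimal idempotent the paper later uses to cut $\mathcal{S}_\lambda(X)$ out of $X^{\otimes n}$. What it costs is that it only makes sense when $n!$ is invertible. James's set-up is characteristic-free: Specht modules are defined over $\mathbb{Z}$, and the Submodule Theorem also produces the modular simple modules. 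The characteristic-zero statement then falls out of positive-definiteness of the form.

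The slip is that rational-valued characters do not make $\mathbb{Q}$ a splitting field, because Schur indices can be nontrivial. For example, the quaternion group $Q_8$ has a rational character table, yet its $2$-dimensional irreducible is not realizable over $\mathbb{Q}$. So ``hence $\mathbb{Q}$ is a splitting field'' does not follow.

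You do not actually need it. All you use is that $k\Sigma_n$ has at most $p(n)$ isomorphism classes of simple modules, and this holds over any field of characteristic zero. The class sums form a basis of $Z(k\Sigma_n)$, which therefore has dimension $p(n)$. Each Wedderburn factor $M_d(D)$ contributes $\dim_k Z(D)\geq 1$ to that center, so there are at most $p(n)$ factors. Together with your injection $\lambda\mapsto S^\lambda$ this gives the bijection. The fact that $\mathbb{Q}$ splits $\Sigma_n$ then becomes a consequence of your Step~2 ($\operatorname{End}(S^\lambda)=k$) rather than an input to Step~1.
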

\begin{remark}
The irreducible representation attached to a partition $\lambda$ is called the Specht module, which following the convention set in \cite{deligne2002categories}, we will denote by $V_{\lambda}$.
\end{remark}
\begin{example}
The partition $(n,0,0,\ldots)$ corresponds to the trivial representation, and the partition $(1)^n$ corresponds to the sign representation.
\end{example}
The following diagram attached to partitions is useful for visualization.
\begin{definition}
If $\lambda$ is a partition, we define a diagram $[\lambda]$ to be the set of pairs $(i,j)$ of positive integers with $j\leq \lambda_i$.
\end{definition}
\begin{example}
There is a graphical presentation of the diagrams we have just defined (essentially the theory of Young diagrams).  For instance, we can draw the diagram attached to the partition $(5,2,2,1)$ as
\[
\begin{tikzpicture}
\path (-3,4)	node	[black]	{$(1,1)$}
	(-1.5,4)	node	[black]	{$(1,2)$}
	(0,4)	node	[black]	{$(1,3)$}
	(1.5,4)	node	[black]	{$(1,4)$}
	(3,4)	node	[black]	{$(1,5)$}
    (-3,3) node [black] {$(2,1)$}
    (-1.5,3)    node    [black] {$(2,2)$}
    (-3,2)  node    [black] {$(3,1)$}
    (-1.5,2)    node    [black] {$(3,2)$}
    (-3,1)  node    [black] {$(4,1)$};
\end{tikzpicture}
\]
\end{example}
One can talk about inclusions of diagrams attached to partitions, which we shall frequently do in the sequel.  Note that $[\mu]\subseteq [\lambda]$ if and only if $\mu_i\leq \lambda_i$ for each $i\geq 0$, so the partial ordering induced by diagram inclusion is just the partial ordering induced by dominance.  Drawing the box diagram attached to a partition motivates the following definition.
\begin{definition}
For fixed positive integers $p,q$, we define a partition $(p)^q$ to be the partition $\lambda$ of length $q$ with $\lambda_1=\lambda_2=\ldots =\lambda_q=p$, and $\lambda_{q+1}=0$.  A partition $\lambda$ will be said to be \textit{rectangular} if $\lambda=(p)^q$ for some $p,q$.
\end{definition}

Similarly, there is a notion of the transposed partition.
\begin{definition}
Let $\lambda$ be a partition.  The transpose $\lambda^t$ of $\lambda$, is the partition of $|\lambda|$ with $(\lambda^t)_{i}=|\{j\colon \lambda_j\geq i\}|$.  One can check that, on the level of box diagrams, $[\lambda^t]=\{(i,j)\colon (j,i)\in [\lambda]\},$ so on the geometric level, $[\lambda^t]$ comes from ``transposing'' the boxes of $[\lambda]$.
\end{definition}
\begin{example}
The partition $(n,0,0,\ldots)$ corresponds to the trivial representation, and its transpose, the partition $(1)^n=(n)^t$, corresponds to the sign representation.  In general, the transpose of the rectangular partition $(p)^q$ is the rectangular partition $(q)^p$.
\end{example}
\begin{fact}[{\cite[Theorem~6.7]{james2006representation}}]
The Specht module attached to the transpose of $\lambda$ is given as $V_{\lambda^t}=V_{\lambda}\otimes \mo{sgn}$, the tensor product of the Specht module attached to $\lambda$ with the sign representation of $\Sigma_n$.
\end{fact}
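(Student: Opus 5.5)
The plan is the standard Specht-module route; no earlier result of the paper is needed. Write $\mo{sgn}$ for the one-dimensional sign representation of $\Sigma_n$. Since $\mo{sgn}$ is one-dimensional, $V_\lambda\otimes \mo{sgn}$ is again irreducible. So by the preceding Fact it equals $V_{\mu}$ for a unique partition $\mu$ of $n$, and the task is to show $\mu=\lambda^t$.

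First I would fix the explicit model for $V_\lambda$. Fix a $\lambda$-tableau $t$, with row stabilizer $R_t$ and column stabilizer $C_t$. Set $a_t=\sum_{\sigma\in R_t}\sigma$ and $b_t=\sum_{\tau\in C_t}\mo{sgn}(\tau)\tau$ in $k\Sigma_n$. Then $V_\lambda\cong k\Sigma_n\, b_t a_t$, and equivalently up to isomorphism $k\Sigma_n\, a_t b_t$. This is the characteristic-zero version of James' polytabloid construction, up to the standard left/right conventions.

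Next, tensoring with $\mo{sgn}$ is realized on the group algebra by the algebra automorphism $\theta\colon\sigma\mapsto \mo{sgn}(\sigma)\sigma$. For a left ideal $I\subseteq k\Sigma_n$, the twisted module $I\otimes\mo{sgn}$ is isomorphic to the left ideal $\theta(I)$. Applying $\theta$ gives $\theta(a_t)=\sum_{\sigma\in R_t}\mo{sgn}(\sigma)\sigma$ and $\theta(b_t)=\sum_{\tau\in C_t}\tau$. Now let $t^t$ be the transposed tableau, a $\lambda^t$-tableau. Its rows are the columns of $t$ and its columns are the rows of $t$, so $R_{t^t}=C_t$ and $C_{t^t}=R_t$. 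Hence $\theta(a_t)=b_{t^t}$ and $\theta(b_t)=a_{t^t}$. It follows that
\[
V_\lambda\otimes\mo{sgn}\cong k\Sigma_n\,\theta(b_ta_t)=k\Sigma_n\, a_{t^t}b_{t^t}\cong V_{\lambda^t}.
\]

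The main obstacle is the isomorphism $k\Sigma_n a_t b_t\cong k\Sigma_n b_t a_t$. I would prove it by showing that right multiplication by $a_t$ maps $k\Sigma_n a_tb_t$ onto $k\Sigma_n a_tb_ta_t$. Here $a_tb_ta_t$ is a nonzero scalar multiple of the idempotent-like Young symmetrizer, by von Neumann's lemma. Both modules are then irreducible and nonzero, so the map is an isomorphism.

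Alternatively, one can avoid this step with characters. On a $\sigma$ of cycle type $\rho$, the character of $V_\lambda\otimes\mo{sgn}$ is $\varepsilon_\rho\chi_\lambda(\rho)$. The Frobenius formula expresses $\chi_\lambda$ as the coefficient of a Schur polynomial $s_\lambda$ in the power sums $p_\rho$. The involution $\omega$ on symmetric functions satisfies $\omega(p_\rho)=\varepsilon_\rho p_\rho$ and $\omega(s_\lambda)=s_{\lambda^t}$ (Jacobi–Trudi), which gives $\chi_{\lambda^t}=\varepsilon\cdot\chi_\lambda$ directly. I would present the tableau argument and cite the character argument as a check.
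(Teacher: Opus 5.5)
Your argument is correct. The paper gives no proof of its own and simply cites James, Theorem~6.7. That theorem holds over an arbitrary field in the form $V_{\lambda}\otimes\mo{sgn}\cong (V_{\lambda^t})^{*}$, and it rests on the same interchange of row and column stabilizers that you exploit. The version recorded in the paper then uses that Specht modules over $\bb{Q}$ are self-dual (James, Theorem~4.12).

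Your step $k\Sigma_n a_tb_t\cong k\Sigma_n b_ta_t$ is exactly the substitute for that self-duality. Over any field one has $(k\Sigma_n x)^{*}\cong k\Sigma_n\,\iota(x)$ for the anti-involution $\iota(\sigma)=\sigma^{-1}$, and $\iota(b_ta_t)=a_tb_t$. So without the characteristic-zero hypothesis your computation yields James's statement verbatim, and characteristic zero enters only at this point.

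One correction in that step: $a_tb_ta_t$ is not a scalar multiple of a Young symmetrizer. Already for $\lambda=(2,1)$ it is a multiple of neither $a_tb_t$ nor $b_ta_t$. What you actually need is only $a_tb_ta_t\neq 0$. This follows from $(a_tb_ta_t)\,b_t=(a_tb_t)^2=n_\lambda\,a_tb_t$ with $n_\lambda=n!/\dim V_\lambda\neq 0$. Combined with irreducibility of $k\Sigma_n a_tb_t$ and of $k\Sigma_n b_ta_t\cong V_\lambda$, Schur's lemma then gives the isomorphism as you claim.
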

We will require the following integers attached to pairs of partitions.
\begin{definition}
If $W$ is an arbitrary representation of $\Sigma_n$, and $\lambda$ is a partition of $n$, write $[W\colon V_{\lambda}]$ for the index of the irreducible representation $V_{\lambda}$ in the direct sum decomposition of $W$.  If $\lambda$ is a partition of $n$ and $\mu_i$ are partitions for $1\leq i\leq m$, with $\sum_{i=1}^{m}|\mu_i|=|\lambda|$, write $[\lambda\colon \mu_1,\ldots,\mu_i]$ for the value 
\[ \left[\mo{Ind}_{\Sigma_{|\mu_1|}\times\ldots\times \Sigma_{|\mu_m|}}^{\Sigma_{|\lambda|}}(V_{\mu_1}\otimes\ldots \otimes V_{\mu_m})\colon V_{\lambda}\right],\]
where $\mo{Ind}$ denotes the induced representation.
\end{definition}

We will make use of the following facts which can be found e.g. in \cite{deligne2002categories}.  The first result is used to show that if a Schur functor attached to a partition $\lambda$ annihilates an object, then so does the Schur functor attached to any partition containing $\lambda$.
\begin{proposition}[{\cite[(1.5.1)]{deligne2002categories}}]\label{prop:Inclusions}
Let $\lambda$ and $\mu$ be partitions.  Then the following are equivalent
\begin{enumerate}
\item  There exists a partition $\nu$ of $|\lambda|-|\mu|$ with $[\lambda\colon \mu,\nu]\neq 0$.
\item  We have $[\mu]\subseteq [\lambda]$.
\item  We have $[\lambda\colon\mu,(1),(1),\ldots, (1)]\neq 0$.
\end{enumerate}
\end{proposition}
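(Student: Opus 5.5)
The plan is to prove the cycle (b)$\Rightarrow$(c)$\Rightarrow$(a)$\Rightarrow$(b), using the branching rule for symmetric groups and Frobenius reciprocity. The basic input is the classical branching rule \cite{james2006representation}: for a partition $\kappa$ of $k$,
\[
\mo{Ind}_{\Sigma_k\times\Sigma_1}^{\Sigma_{k+1}}(V_\kappa\otimes V_{(1)})\cong\bigoplus_{\kappa^+}V_{\kappa^+},
\]
where $\kappa^+$ runs over partitions obtained from $[\kappa]$ by adding one box. Dually, the restriction of $V_{\kappa}$ to $\Sigma_{k-1}$ is the sum of $V_{\kappa^-}$ over partitions with one box removed. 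We also use transitivity of induction, in the form
\[
\mo{Ind}_{\Sigma_{|\mu|}\times\Sigma_1^{r}}^{\Sigma_{n}}=\mo{Ind}_{\Sigma_{n-1}\times\Sigma_1}^{\Sigma_n}\circ\cdots\circ\mo{Ind}_{\Sigma_{|\mu|}\times\Sigma_1}^{\Sigma_{|\mu|+1}}.
\]

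For (b)$\Rightarrow$(c), iterate the branching rule. Then $[\lambda\colon\mu,(1),\ldots,(1)]$ counts chains $[\mu]=[\kappa^0]\subset[\kappa^1]\subset\cdots\subset[\kappa^r]=[\lambda]$ of partition diagrams, each obtained from the last by adding one box. These are exactly the standard skew tableaux of shape $\lambda/\mu$. If $[\mu]\subseteq[\lambda]$, such a chain exists. For instance, add the boxes of $[\lambda]\setminus[\mu]$ row by row, from top to bottom and left to right within each row. At every stage the row lengths stay weakly decreasing, because we only lengthen row $i$ up to $\lambda_i\le\lambda_{i-1}$ after row $i-1$ is already complete. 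Hence the multiplicity is at least one.

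For (c)$\Rightarrow$(a), note that
\[
\mo{Ind}_{\Sigma_{|\mu|}\times\Sigma_1^r}(V_\mu\otimes \kk^{\otimes r})\cong\mo{Ind}_{\Sigma_{|\mu|}\times\Sigma_r}\bigl(V_\mu\otimes\mo{Ind}_{\Sigma_1^r}^{\Sigma_r}\kk\bigr),
\]
and $\mo{Ind}_{\Sigma_1^r}^{\Sigma_r}\kk$ is the regular representation, which decomposes as $\bigoplus_\nu V_\nu^{\dim V_\nu}$. So the multiplicity in (c) equals $\sum_\nu \dim(V_\nu)\,[\lambda\colon\mu,\nu]$. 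If this is nonzero, some $[\lambda\colon\mu,\nu]$ is nonzero, which gives (a).

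For (a)$\Rightarrow$(b), assume $[\lambda\colon\mu,\nu]\ne0$. Frobenius reciprocity says that $V_\mu\otimes V_\nu$ appears in $\Res^{\Sigma_{|\lambda|}}_{\Sigma_{|\mu|}\times\Sigma_{|\nu|}}V_\lambda$, so $V_\mu$ appears in $\Res_{\Sigma_{|\mu|}}V_\lambda$. Factor this restriction through $\Sigma_{|\lambda|-1}\supset\cdots\supset\Sigma_{|\mu|}$. By the restriction branching rule, every constituent of the restriction is $V_\kappa$ with $[\kappa]$ obtained from $[\lambda]$ by successively removing boxes, so $[\kappa]\subseteq[\lambda]$. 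In particular $[\mu]\subseteq[\lambda]$.

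The main obstacle is simply having the branching rule available; it is standard (e.g. \cite{james2006representation}) and I would cite it rather than reprove it. The rest is bookkeeping with induction, restriction, and reciprocity.
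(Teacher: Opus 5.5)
Your proof is correct. The iterated branching rule identifies $[\lambda\colon\mu,(1),\ldots,(1)]$ with the number of standard skew tableaux of shape $\lambda/\mu$, the regular-representation identity expresses it as $\sum_\nu \dim(V_\nu)[\lambda\colon\mu,\nu]$, and Frobenius reciprocity together with the restriction branching rule closes the cycle. The paper gives no argument of its own and simply cites Deligne's (1.5.1); your argument is the standard one behind that reference.
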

The following result will be used primarily in \Cref{cor:explicitpresentation} to give explicit bounds for how Schur functors interact with certain fiber sequences.
\begin{proposition}[{\cite[Corollary~1.10]{deligne2002categories}}]\label{prop:Deligne1.10}
Let $p,q,r,s\geq 0$ be integers, and $\lambda$, $\mu$, and $\nu$ be partitions with $|\lambda|=|\mu|+|\nu|$.  If $(p+r+1,q+s+1)\in [\lambda]$ and $[\lambda\colon \mu,\nu]\neq 0$, then $(p+1,q+1)\in [\mu]$ or $(r+1,s+1)\in [\nu]$.
\end{proposition}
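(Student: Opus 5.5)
The plan is to argue with Schur functors on super vector spaces, not with Littlewood--Richardson combinatorics. The key input is the classical hook criterion of Berele--Regev and Sergeev. For a super vector space $V$ over $\bb{Q}$ (or $\bb{C}$) of superdimension $(a|b)$ and a partition $\lambda$, the Schur functor $S_\lambda(V)\coloneqq (V^{\otimes|\lambda|}\otimes V_\lambda)^{\Sigma_{|\lambda|}}$, formed with the Koszul sign rule, satisfies
\[ S_\lambda(V)\neq 0 \iff (a+1,b+1)\notin[\lambda]. \]
In words, $S_\lambda(V)$ is nonzero exactly when $[\lambda]$ fits inside the $(a,b)$-hook. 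I will quote this from the literature (e.g.\ Deligne's paper) and not reprove it. It is a standard computation with supersymmetric Schur polynomials.

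The second input is the additivity formula for Schur functors, valid in any $\bb{Q}$-linear symmetric monoidal category:
\[ S_\lambda(V\oplus V')\;\simeq\;\bigoplus_{|\mu|+|\nu|=|\lambda|}\big(S_\mu(V)\otimes S_\nu(V')\big)^{\oplus[\lambda\colon\mu,\nu]}. \]
To derive it, expand $(V\oplus V')^{\otimes n}$ as $\bigoplus_{m}\mo{Ind}_{\Sigma_m\times\Sigma_{n-m}}^{\Sigma_n}(V^{\otimes m}\otimes V'^{\otimes n-m})$. Then decompose $V^{\otimes m}$ and $V'^{\otimes n-m}$ isotypically, as $\bigoplus_\mu S_\mu(V)\otimes V_\mu$ and similarly for $V'$. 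Finally, Frobenius reciprocity extracts the $V_\lambda$-isotypic part, and the multiplicities $[\lambda\colon\mu,\nu]$ defined above appear.

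The argument then runs as follows. Take $V$ of superdimension $(p|q)$ and $V'$ of superdimension $(r|s)$, so that $V\oplus V'$ has superdimension $(p+r|q+s)$. Since $(p+r+1,q+s+1)\in[\lambda]$ by hypothesis, the hook criterion gives $S_\lambda(V\oplus V')=0$. By the additivity formula, every summand $S_\mu(V)\otimes S_\nu(V')$ with $[\lambda\colon\mu,\nu]\neq0$ therefore vanishes. In super vector spaces over a field, a tensor product of two objects is zero only if one factor is zero. Hence $S_\mu(V)=0$ or $S_\nu(V')=0$. Applying the hook criterion once more, this says $(p+1,q+1)\in[\mu]$ or $(r+1,s+1)\in[\nu]$, which is the claim.

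The main obstacle is the hook criterion itself. If we insist on proving it rather than citing it, I would use the supersymmetric Schur polynomial $s_\lambda(x_1,\dots,x_a/y_1,\dots,y_b)$, which computes the super-character of $S_\lambda(V)$. The vanishing direction holds because every (semi)standard supertableau of shape $\lambda$ needs a box $(a+1,b+1)$ that cannot be filled. The nonvanishing direction comes from exhibiting an explicit nonzero monomial.

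As a purely combinatorial fallback, one could instead prove the Weyl-type inequality $\lambda_{i+j-1}\le\mu_i+\nu_j$ whenever $[\lambda\colon\mu,\nu]\neq0$. For example, this follows via Horn's problem and Weyl's eigenvalue inequality. One then applies it with $i=p+1$ and $j=r+1$. This version is less self-contained, so I would prefer the super vector space route.
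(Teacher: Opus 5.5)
Your proof is correct and follows the same route as the cited source. The paper gives no argument of its own, only the reference to Deligne's Corollary~1.10, which Deligne obtains exactly as you do: he combines his hook criterion for super vector spaces (his Corollary~1.9, recorded here as \Cref{prop:SumsofUnitShift} in the form $\cSl(\bb{1}^{\oplus p}\oplus\Sigma\bb{1}^{\oplus q})\simeq 0\iff(p+1,q+1)\in[\lambda]$) with the sum formula of \Cref{prop:omnibusSchur}, applied to a direct sum of objects of superdimensions $(p|q)$ and $(r|s)$.
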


\subsection{Recollection of Schur functors}
Throughout, fix an idempotent complete $\bb{Q}$-linearly symmetric monoidal additive $\infty$-category $\cat{C}$.  We review the theory of Schur functors on $\cat{C}$ from a representable perspective.

\begin{proposition}
The free $\bb{Q}$-linear, idempotent complete additive symmetric monoidal $\infty$-category on an object is given by
\[\coprod_{n\geq 0}\mo{Fun}(\mo{B}\!\Sigma_n,\mo{Vect}_{\bb{Q}})^{\omega}\simeq \coprod_{n\geq 0}\mo{mod}(\bb{Q}[\Sigma_n]),\]
where the tensor product is given by, if $V$ is a $\Sigma_n$-representation and $W$ a $\Sigma_m$-representation, then the tensor product in this category $V\otimes W$ is the induced $\Sigma_{n+m}$-representation $\mo{Ind}^{\Sigma_{n+m}}_{\Sigma_n\times \Sigma_m}(V\otimes_{\bb{Q}}W)$.
\end{proposition}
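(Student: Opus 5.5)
The proof has three steps: identify the free symmetric monoidal additive $\infty$-category on one generator, pass to $\bb{Q}$-linearizations, and then to idempotent completions.

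First, the free symmetric monoidal $\infty$-category on one object is the groupoid $\mathrm{Fin}^{\simeq}\simeq\coprod_{n\geq 0}\mo{B}\Sigma_n$, with tensor product given by disjoint union. Equivalently, the induced monoidal structure sends $(\Sigma_n,\Sigma_m)$ to $\Sigma_{n+m}$ via the block inclusion $\Sigma_n\times\Sigma_m\hookrightarrow\Sigma_{n+m}$. This is the standard statement that $\mathrm{Fin}^{\simeq}$ is the free $\bb{E}_\infty$-space on a point, and I would cite it from \cite{HA}.

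Second, freely adjoining finite direct sums and making the result $\bb{Q}$-linear is a symmetric monoidal left adjoint. I would model it as taking the $\bb{Q}$-linear "free module" on mapping spaces, i.e.\ the category whose objects are finite formal sums of objects of $\coprod_n \mo{B}\Sigma_n$ and whose mapping spectra are $\bb{Q}[\mo{Map}]$. The key observation is that $\bb{Q}[\Sigma_n]=C_*(\Sigma_n;\bb{Q})$ is discrete, because $\mo{B}\Sigma_n$ has only one nonvanishing homotopy group and $\mo{Map}(*,*)=\Sigma_n$ is discrete. So the result is the ordinary additive category of free finitely generated $\bb{Q}[\Sigma_n]$-modules, summed over $n$. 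Since Hom-sets between different $n$ vanish, the category splits as a coproduct. Universality of this step follows by composing the universal properties: symmetric monoidal functors out of the free $\bb{Q}$-linear additive category on a symmetric monoidal $\infty$-category $\cat{S}$ are the same as symmetric monoidal functors out of $\cat{S}$. I would take this as the standard fact that $\bb{Q}$-linearization is compatible with Day convolution.

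Third, idempotent completion is also symmetric monoidal and left adjoint to the inclusion of idempotent complete categories. By Maschke's theorem, $\bb{Q}[\Sigma_n]$ is semisimple, so the idempotent completion of the category of free modules is $\mo{mod}(\bb{Q}[\Sigma_n])$. This category is equivalently $\mo{Fun}(\mo{B}\Sigma_n,\mo{Vect}_{\bb{Q}})^{\omega}$, the compact objects of $\mo{Fun}(\mo{B}\Sigma_n,\mo{Vect}_{\bb{Q}})$. No higher homotopy enters, because rational group homology of a finite group is concentrated in degree $0$, so the $\infty$-categorical and $1$-categorical descriptions agree.

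Finally, I would identify the tensor product. On representables, the product of the generators in degrees $n$ and $m$ is the generator in degree $n+m$ with $\Sigma_n\times\Sigma_m$ acting through the block inclusion. Extending by colimits (left Kan extension, i.e.\ Day convolution along the monoidal structure of $\coprod_n \mo{B}\Sigma_n$) gives exactly
\[V\otimes W\simeq \mo{Ind}_{\Sigma_n\times\Sigma_m}^{\Sigma_{n+m}}(V\otimes_{\bb{Q}}W).\]

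The main obstacle is the second step, namely ensuring that the $\infty$-categorical free construction does not introduce higher homotopy. This reduces to the vanishing of $H_{>0}(\Sigma_n;\bb{Q})$ together with semisimplicity, which makes the relevant endomorphism spectra discrete.
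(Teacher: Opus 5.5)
Your proposal is correct and follows essentially the paper's route: identify the free symmetric monoidal $\infty$-category on an object with $\mo{FinSet}^{\simeq}$, pass to its free $\bb{Q}$-linear idempotent complete additive envelope with Day convolution, and unwind this to $\coprod_{n}\mo{mod}(\bb{Q}[\Sigma_n])$ with induction as the tensor product. You split the envelope into linearization followed by idempotent completion (using Maschke), whereas the paper states it in one step as the compact objects of $\mo{Fun}((\mo{FinSet}^{\simeq})^{op},\mo{Vect}_{\bb{Q}})$; one minor point is that your appeal to $H_{>0}(\Sigma_n;\bb{Q})=0$ is unnecessary, since discreteness of $\bb{Q}[\Sigma_n]$ already follows from the mapping spaces of $\mo{FinSet}^{\simeq}$ being the discrete sets $\Sigma_n$, as you yourself observe in your second step.
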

\begin{proof}
One can use that the free symmetric monoidal $\infty$-category on an object is the category $\mo{FinSet}^{\simeq}$ of finite sets and bijective maps, combined with an argument analogous to \cite[Lemma~2.2]{hyslop2025towards} (which is really just expanding details from \cite{HA}) to see that the free idempotent complete $\bb{Q}$-linearly additive symmetric monoidal category on an object is the compact objects in $\mo{Fun}\left(\left(\mo{FinSet}^{\simeq}\right)^{op},\mo{Vect}_{\bb{Q}}\right)$, equipped with the Day convolution symmetric monoidal structure.  Translating this explicitly yields the result, with the category of finite type $\bb{Q}[\Sigma_n]$-modules, $\mo{mod}(\bb{Q}[\Sigma_n])$, corresponding to those functors $F\colon\left(\mo{FinSet}^{\simeq}\right)^{op}\to \mo{Vect}_{\bb{Q}}$ with $F(S)\simeq 0$ for $S$ a finite set with $|S|\neq n$, and when $|S|=n$, $F(S)$ is some finite dimensional $\bb{Q}$-vector space.
\end{proof}
\begin{remark}
The free category described above is nothing but the category of finite degree strict polynomial functors valued in $\bb{Q}$-vector spaces as defined in \cite{friedlander1997cohomology}.
\end{remark}
In particular, the free category is semi-simple, and the simple objects correspond to the irreducible representations over $\bb{Q}$ of symmetric groups $\Sigma_n$ over various $n$.  This leads us to
\begin{corollary}
The natural endomorphisms of the 2-functor
\[\mo{Forget}\colon \AddQ\to \mo{Cat}_{\infty}\]
are in bijection with objects in $\coprod_{n\geq 0}\mo{mod}(\bb{Q}[\Sigma_n])$.  For a fixed $\cat{C}\in\AddQ$, the endofunctor attached to an object $Y$ is explicitly described by sending $X\in\cat{C}$ to the image of $Y$ under the induced functor 
\[X\colon\coprod_{n\geq 0}\mo{mod}(\bb{Q}[\Sigma_n])\to \cat{C}.\]
\end{corollary}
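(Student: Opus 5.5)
This corollary is a Yoneda-type statement, and I would prove it by reducing the 2-functor $\mo{Forget}$ to a corepresentable one. Write $\cat{F}\coloneqq \coprod_{n\geq 0}\mo{mod}(\bb{Q}[\Sigma_n])$ for the free object of $\AddQ$ on one generator $x$ (the trivial $\Sigma_1$-representation), as in the preceding proposition.

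The first step is to make the freeness precise. For every $\cat{C}\in\AddQ$, evaluation at $x$,
\[\mo{Fun}^{\otimes}(\cat{F},\cat{C})\isoto \cat{C},\]
should be an equivalence of $\infty$-categories, natural in $\cat{C}$. Here $\mo{Fun}^{\otimes}$ denotes $\bb{Q}$-linear symmetric monoidal functors. The proposition gives essential surjectivity. For full faithfulness, the point is that a symmetric monoidal natural transformation between functors out of $\cat{F}$ is determined by its component at $x$. This holds because every object of $\cat{F}$ is a retract of a sum of the objects $x^{\otimes n}$, and freeness on $\mo{FinSet}^{\simeq}$ already gives the statement there. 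The outcome is an equivalence $\mo{Forget}\simeq \mo{Map}_{\AddQ}(\cat{F},-)$, where the right side is viewed as valued in $\mo{Cat}_{\infty}$ through the functor categories $\mo{Fun}^{\otimes}(\cat{F},-)$.

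The second step is the enriched Yoneda lemma for this corepresentable 2-functor. It should give
\[\mo{End}(\mo{Forget})\simeq \mo{Fun}^{\otimes}(\cat{F},\cat{F})^{\text{underlying}}\simeq \mo{Forget}(\cat{F})=\cat{F}.\]
Concretely, a natural endomorphism $\Phi$ is sent to $Y\coloneqq\Phi_{\cat{F}}(x)\in\cat{F}$. Conversely, an object $Y$ is sent to the transformation whose component at $\cat{C}$ is
\[X\longmapsto X_*(Y),\]
where $X_*\colon\cat{F}\to\cat{C}$ is the symmetric monoidal functor classified by $X$. This is exactly the explicit description asserted in the statement.

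It remains to check that the two assignments are inverse, and this is a naturality argument. Naturality of $\Phi$ along the morphism $X_*\colon\cat{F}\to\cat{C}$ in $\AddQ$ gives
\[\Phi_{\cat{C}}(X)=\Phi_{\cat{C}}(X_*(x))\simeq X_*(\Phi_{\cat{F}}(x))=X_*(Y).\]
In the other direction, $x_*=\id$, so starting from $Y$ and evaluating the resulting transformation at $x$ returns $Y$.

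I expect the main obstacle to be the $\infty$-categorical bookkeeping rather than the algebra. One must say in which $(\infty,2)$-category the natural endomorphisms live. One must also ensure that $\mo{Forget}$ is genuinely corepresented, which needs the $\mo{Fun}^{\otimes}$-level equivalence and not merely a bijection on objects. I would handle this by appealing to the universal property of Day convolution on presheaves over $\mo{FinSet}^{\simeq}$ (as in \cite{HA}) followed by $\bb{Q}$-linearization and idempotent completion. Each of these is a left adjoint on the relevant categories of symmetric monoidal categories, so freeness holds at the level of mapping $\infty$-categories. The bijection in the statement is then the statement on equivalence classes of objects.
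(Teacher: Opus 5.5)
Your proposal is correct and follows the paper's own argument. The paper's proof is the one-line observation that $\coprod_{n\geq 0}\mo{mod}(\bb{Q}[\Sigma_n])$ corepresents $\mo{Forget}$, so the 2-Yoneda lemma (cited from Macpherson) identifies its natural endomorphisms with this category, with the formula $X\mapsto X_*(Y)$ coming from naturality just as you check; your care in proving corepresentability at the level of functor $\infty$-categories, through the enriched universal properties of Day convolution, $\bb{Q}$-linearization and idempotent completion, supplies the bookkeeping the paper leaves implicit.
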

\begin{proof}
This follows from the 2-Yoneda lemma (see e.g. \cite{macpherson2022bivariant}) applied to the representable functor $\mo{Forget}$, using that $\coprod_{n\geq 0}\mo{mod}(\bb{Q}[\Sigma_n])$ is a representing object for this functor.
\end{proof}
\begin{definition}
For a partition $\lambda$ of $n$, we say that the \textit{Schur functor} $\cSl$ attached to $\lambda$ is the natural endomorphism of $\mo{Forget}$ attached to the Specht module indexed by $\lambda$, \[V_{\lambda}\in \coprod_{n\geq 0}\mo{mod}(\bb{Q}[\Sigma_n]).\]  We will often abusively identify $\cSl$ with its evaluation at a given category $\cat{C}$, where we will call it the Schur functor $\cSl\colon \cat{C}\to \cat{C}$.
\end{definition}
\begin{remark}
The Schur functor $\cSl$ has a slightly more explicit presentation using the map $\bb{Q}[\Sigma_n]\to \mo{End}_{\cat{C}}(X^{\otimes n})$ for an object $X\in\cat{C}$.  The Schur functor evaluated at $X$, $\cSl(X)$, is the summand of $X^{\otimes n}$ arising from the idempotent in $\mo{End}_{\cat{C}}(X^{\otimes n})$ which is the image of a minimal idempotent of $\bb{Q}[\Sigma_n]$ picking out the irreducible representation attached to $\lambda$.
\end{remark}
\begin{example}
The Schur functor attached to the partition $(n)$ is called the $n$th symmetric power $\mo{Sym}^n(X)=X^{\otimes n}_{h\Sigma_n}$.  The Schur functor attached to the transposed partition $(n)^t$ is the $n$th exterior power $\bigwedge^n(X)$.  These are the two main classes of examples that people use in their day to day life.
\end{example}
\begin{proposition}\label{prop:omnibusSchur}
Let $X,Y\in\cat{C}$ be objects.  Then, we have that 
\[\cSm(X)\otimes \cS[\nu](X)\simeq \bigoplus_{\lambda} \cSl(X)^{[\lambda\colon \mu,\nu]},\]
with the sum over partitions $\lambda$ of $|\nu|+|\mu|$.  Furthermore, we have
\[\cSl(X\oplus Y)\simeq \bigoplus \left(\cSm(X)\otimes\cS[\nu](Y)\right)^{[\lambda\colon \mu,\nu]},\]
with the sum over all partitions $\mu, \nu$ with $|\lambda|=|\mu|+|\nu|$.  Finally,
\[\cSl(X\otimes Y)\simeq \bigoplus \left(\cSm(X)\otimes \cS[\nu](Y)\right)^{[V_{\mu}\otimes V_{\nu}\colon V_{\lambda}]},\]
with the sum over all partitions $\mu$ and $\nu$ of $|\lambda|$.
\end{proposition}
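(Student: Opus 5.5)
The strategy is to reduce all three formulas to identities in the free category $\coprod_{n\geq 0}\mo{mod}(\bb{Q}[\Sigma_n])$ and then push them forward. By the corollary above, $\cSl$ is the natural endomorphism of $\mo{Forget}$ represented by $V_\lambda$. Hence for $X\in\cat{C}$, the object $\cSl(X)$ is the image of $V_\lambda$ under the essentially unique symmetric monoidal additive functor $F_X\colon\coprod_n\mo{mod}(\bb{Q}[\Sigma_n])\to\cat{C}$ sending the generator (the trivial $\Sigma_1$-module) to $X$. Since $F_X$ is symmetric monoidal and additive, each formula follows once the corresponding decomposition is established universally.

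For the first formula, I would apply $F_X$ to $V_\mu\otimes V_\nu$. In the free category this tensor product is $\mo{Ind}^{\Sigma_{|\mu|+|\nu|}}_{\Sigma_{|\mu|}\times\Sigma_{|\nu|}}(V_\mu\otimes_{\bb{Q}}V_\nu)$, by the description of the tensor product there. By semisimplicity, this induced module splits as $\bigoplus_\lambda V_\lambda^{[\lambda\colon\mu,\nu]}$, with multiplicities given by definition. Since $F_X$ is additive and monoidal, the output is $\cSm(X)\otimes\cS[\nu](X)\simeq\bigoplus_\lambda\cSl(X)^{[\lambda\colon\mu,\nu]}$.

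For the second and third formulas, one object no longer suffices, so I would use the free category on two objects. Running the same Day-convolution argument over $\mo{FinSet}^{\simeq}\times\mo{FinSet}^{\simeq}$ identifies it with $\coprod_{n,m}\mo{mod}(\bb{Q}[\Sigma_n\times\Sigma_m])$. Let $G_{X,Y}$ be the functor out of it sending the two generators to $X$ and $Y$. The Schur functor applied to $X\oplus Y$ is then $G_{X,Y}$ applied to $\cSl$ of the sum of the two generators.

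In the universal case, the $n$-th tensor power of the sum of the generators decomposes according to which factors come from each summand. This gives $\bigoplus_{a+b=n}\mo{Ind}_{\Sigma_a\times\Sigma_b}^{\Sigma_n}$ of the bigraded piece, compatibly with the $\Sigma_n$-action. Taking the $V_\lambda$-isotypic part and using Frobenius reciprocity, namely $\Res^{\Sigma_n}_{\Sigma_a\times\Sigma_b}V_\lambda\simeq\bigoplus_{\mu,\nu}(V_\mu\boxtimes V_\nu)^{[\lambda\colon\mu,\nu]}$, yields the second formula.

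For $X\otimes Y$, the $n$-th tensor power of the tensor product of the generators is the $(n,n)$-graded piece $\bb{Q}[\Sigma_n]\boxtimes\bb{Q}[\Sigma_n]$. Here $\Sigma_n$ acts diagonally, so it is restricted along $\Sigma_n\to\Sigma_n\times\Sigma_n$. Decomposing as $\bigoplus_{\mu,\nu}V_\mu\boxtimes V_\nu\otimes(V_\mu\otimes V_\nu)$ and extracting $V_\lambda$-isotypic components gives the multiplicities $[V_\mu\otimes V_\nu\colon V_\lambda]$. This uses that Specht modules are absolutely irreducible and self-dual over $\bb{Q}$.

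The main obstacle is bookkeeping in the third formula and in the two-variable free category. One must check that the isotypic projection of $V_\lambda$ commutes with the functor to $\cat{C}$; this holds because idempotents go to idempotents in an idempotent complete category. One must also check that the identifications are natural rather than only objectwise; naturality follows since everything is computed in the representing object.
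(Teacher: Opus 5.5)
Your argument is correct, but it is not what the paper does. The paper gives no argument of its own and simply cites Deligne's Propositions 1.6, 1.8 and 1.11. Those are proved by direct computations with the $\Sigma_n$-action on tensor powers inside a $\bb{Q}$-linear Karoubian tensor $1$-category. You instead do the representation theory once in the universal example and transport it along the symmetric monoidal additive functor $F_X$ (resp.\ $G_{X,Y}$).

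The representation-theoretic input is the same in both: induction and restriction, Frobenius reciprocity, and split semisimplicity of $\bb{Q}[\Sigma_n]$ with absolutely irreducible, self-dual Specht modules. So the difference is organizational, but yours has two advantages in this paper's setting. It uses exactly the paper's definition of $\cSl$ as the natural endomorphism represented by $V_\lambda$. It also works natively for $\infty$-categories, whereas invoking Deligne's $1$-categorical results tacitly requires passing to $h\cat{C}$, as the paper does in the proof of \Cref{thm:genimpliesall}. The cost is that you need the free category on two objects, which the paper does not record. Your Day convolution sketch over $\mo{FinSet}^{\simeq}\times\mo{FinSet}^{\simeq}$ supplies it; equivalently, take the tensor product of two copies of the one-variable free category.

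One point of wording to fix: $\cSl(Z)$ is the summand of $Z^{\otimes n}$ cut out by a primitive idempotent of $\bb{Q}[\Sigma_n]$, not by the central one. The full $V_\lambda$-isotypic part is $\cSl(Z)^{\oplus \dim V_\lambda}$. When you say ``extract the isotypic component'' you must mean the primitive-idempotent (multiplicity-space) version. That is what yields exactly the multiplicities $[\lambda\colon \mu,\nu]$ and $[V_\mu\otimes V_\nu\colon V_\lambda]$ in your second and third computations.

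Finally, the naturality check you flag is unnecessary, since the statement only asserts isomorphisms of objects.
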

\begin{proof}
This is standard, see for example \cite[Proposition~1.6, Proposition~1.8, Proposition~1.11]{deligne2002categories}.
\end{proof}
\begin{lemma}\label{lem:SchurToShifts}
If $\cat{C}$ is a rational 2-ring, then $\cSl(\Sigma \bb{1})=0$ for all partitions $\lambda$ of $n$ except for $\lambda = (n)^t=(1,1,\ldots,1)$, in which case $\cS[(1)^n](\Sigma \bb{1})\simeq \Sigma^n \bb{1}$.
\end{lemma}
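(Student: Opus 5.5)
The plan is to compute the $\Sigma_n$-action on $(\Sigma\bb{1})^{\otimes n}\simeq \Sigma^n\bb{1}$ and then read off the isotypic components. By the Remark following the definition of Schur functors, $\cSl(\Sigma\bb{1})$ is the summand of $(\Sigma\bb{1})^{\otimes n}$ cut out by the image of a minimal idempotent $e_\lambda\in\bb{Q}[\Sigma_n]$ under the representation map
\[\rho\colon\bb{Q}[\Sigma_n]\to \mo{End}_{\cat{C}}\big((\Sigma\bb{1})^{\otimes n}\big).\]
So everything reduces to identifying $\rho$.

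The first step is to show that $\rho$ factors through the sign character. Concretely, we want $\rho(\sigma)=\mo{sgn}(\sigma)\cdot\id$ under $(\Sigma\bb{1})^{\otimes n}\simeq\Sigma^n\bb{1}$. It suffices to treat a transposition, which by naturality reduces to $n=2$: the swap map on $\Sigma\bb{1}\otimes\Sigma\bb{1}\simeq\Sigma^2\bb{1}$ should be $-1$. I would argue this via the symmetric monoidal functor $\mo{Sp}^{\omega}\to\cat{C}$ (exact and symmetric monoidal, since $\cat{C}$ is stable symmetric monoidal), which sends $S^1$ to $\Sigma\bb{1}$. Hence it suffices to know that the swap on $S^1\wedge S^1=S^2$ has degree $-1$ in spectra, which is classical: the swap on $\bb{R}^2$ is a reflection. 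Because $\cat{C}$ is rational, the action is determined by its image in $\pi_0\mo{End}$, so the homotopy-coherence of the action is not an issue.

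Given this, $\rho$ factors as $\bb{Q}[\Sigma_n]\to\bb{Q}\to\pi_0\mo{End}(\Sigma^n\bb{1})$, the first map being the sign character. Since $\bb{Q}[\Sigma_n]\simeq\prod_\lambda \mo{End}(V_\lambda)$, the sign character kills every matrix block except the one for $V_{(1)^n}=\mo{sgn}$, on which it is the identity. Therefore $\rho(e_\lambda)=0$ for $\lambda\neq(1)^n$, giving $\cSl(\Sigma\bb{1})=0$. For $\lambda=(1)^n$ we get $\rho(e_\lambda)=\id$, so $\cS[(1)^n](\Sigma\bb{1})\simeq\Sigma^n\bb{1}$.

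Alternatively, one could say $\Sigma\bb{1}$ is the image of the free generator under a functor from the free category, and in that category the generator goes to the odd line, so $\rho$ is the sign twist of the usual action. Either way, the main obstacle is the sign computation for the swap on $\Sigma\bb{1}\otimes\Sigma\bb{1}$. Everything else is bookkeeping in the semisimple algebra $\bb{Q}[\Sigma_n]$.
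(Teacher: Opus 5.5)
Your proposal is correct and follows essentially the same route as the paper. Both arguments reduce by naturality along the unit functor (you use $\mo{Sp}^{\omega}\to\cat{C}$, the paper uses $\mo{Perf}(\bb{Q})\to\cat{C}$) to the fact that $\Sigma_n$ acts on $(\Sigma\bb{1})^{\otimes n}\simeq\Sigma^n\bb{1}$ through the sign character, and then observe that only the sign-isotypic idempotent survives. Your degree computation for the swap on $S^2$ and your Wedderburn-block argument are slightly more explicit versions of the paper's appeal to the Koszul sign rule and to the simplicity of $\Sigma^n\bb{1}$ in $\mo{Perf}(\bb{Q})$.
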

\begin{proof}
By naturality of Schur functors with respect to symmetric monoidal additive functors, it suffices to assume that $\cat{C}=\mo{Perf}(\bb{Q})$.  In this case, $(\Sigma \bb{1})^{\otimes n}\simeq \Sigma^n \bb{1}$ is simple, so has non-zero image under at most one Schur functor.  Since the symmetric group acts on $\Sigma^n\bb{1}$ through the sign representation (coming from the Koszul sign rule and the fact that $\Sigma\bb{1}$ is in degree 1), the irreducible representation corresponding to this summand is exactly the sign representation, which corresponds to the partition $(n)^t$, as claimed.
\end{proof}
\begin{corollary}\label{Cor:Shifts}
If $\cat{C}$ is a rational 2-ring, $X\in\cat{C}$ any object, and $\lambda$ is any partition of a positive integer $n$, then $\cSl(\Sigma X)\simeq \Sigma^n \cS[\lambda^{t}](X)$.
\end{corollary}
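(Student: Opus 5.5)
Write $\Sigma X\simeq \Sigma\bb{1}\otimes X$ and apply the last formula of \Cref{prop:omnibusSchur}, the one for Schur functors of a tensor product:
\[\cSl(\Sigma\bb{1}\otimes X)\simeq \bigoplus_{\mu,\nu}\left(\cSm(\Sigma\bb{1})\otimes \cS[\nu](X)\right)^{[V_\mu\otimes V_\nu\colon V_\lambda]},\]
where $\mu$ and $\nu$ range over partitions of $n$.

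By \Cref{lem:SchurToShifts}, every term with $\mu\neq (1)^n$ vanishes, and for $\mu=(1)^n$ we have $\cS[(1)^n](\Sigma\bb{1})\simeq\Sigma^n\bb{1}$. The sum therefore collapses to
\[\bigoplus_{\nu}\left(\Sigma^n \cS[\nu](X)\right)^{[\mathrm{sgn}\otimes V_\nu\colon V_\lambda]}.\]

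By the Fact quoted from \cite[Theorem~6.7]{james2006representation}, $\mathrm{sgn}\otimes V_\nu\simeq V_{\nu^t}$, which is irreducible. Hence the multiplicity $[\mathrm{sgn}\otimes V_\nu\colon V_\lambda]$ equals $1$ if $\nu^t=\lambda$, that is if $\nu=\lambda^t$, and $0$ otherwise. Only the term $\nu=\lambda^t$ survives, with multiplicity one, which gives $\cSl(\Sigma X)\simeq\Sigma^n\cS[\lambda^t](X)$.

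The only real point of care is the first step: we are using the tensor-product formula of \Cref{prop:omnibusSchur} in a stable rational 2-ring, where $\Sigma\bb{1}$ is an object with nontrivial Koszul signs. This is legitimate because that formula is a statement in $\AddQ$ about natural endomorphisms of $\mo{Forget}$, and so it holds for any pair of objects of an idempotent complete $\bb{Q}$-linear symmetric monoidal additive $\infty$-category; the signs are entirely accounted for by \Cref{lem:SchurToShifts}.

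If one wants to double-check this, compute directly. The $\Sigma_n$-action on $(\Sigma X)^{\otimes n}\simeq \Sigma^n\bb{1}\otimes X^{\otimes n}$ is the diagonal action, with $\Sigma_n$ acting on the factor $\Sigma^n\bb{1}$ through the sign representation. The $V_\lambda$-isotypic part of this object is then the $V_\lambda\otimes\mathrm{sgn}=V_{\lambda^t}$-isotypic part of $X^{\otimes n}$, shifted by $n$, which is exactly the claim.
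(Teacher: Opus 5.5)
Your proposal is correct and is essentially the paper's proof. In both, you write $\Sigma X\simeq\Sigma\bb{1}\otimes X$, apply the tensor-product formula of \Cref{prop:omnibusSchur}, use \Cref{lem:SchurToShifts} to discard every $\mu\neq(1)^n$, and use $V_{(1)^n}\otimes V_{\nu}\simeq V_{\nu^t}$ to isolate the single term $\nu=\lambda^t$ with multiplicity one; you simply spell out the multiplicity bookkeeping that the paper compresses into one line.
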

\begin{proof}
We have that $\Sigma X\simeq \Sigma\bb{1}\otimes X$.  By \Cref{lem:SchurToShifts}, the only Schur functor which doesn't vanish on $\Sigma \bb{1}$ is $\cS[(1)^n]$, so using that $V_{(1)^n}$ is invertible under tensor product and $V_{(1)^n}\otimes V_{\lambda^{t}}=V_{\lambda}$, \Cref{prop:omnibusSchur} tells us that 
\[\cSl(\Sigma \bb{1}\otimes X)\simeq \cS[(1)^n](\Sigma \bb{1})\otimes \cS[\lambda^{t}](X)\simeq \Sigma^n(\bb{1})\otimes \cS[\lambda^t](X)\simeq \Sigma^n\cS[\lambda^t](X),\]
as claimed.
\end{proof}
We will use the following several times in the sequel, so we record it here for good measure.
\begin{proposition}\label{prop:SumsofUnitShift}
Let $\cat{C}$ be a non-zero rational 2-ring, and fix some $p,q\geq 0$ integers, with at least one of them non-zero.  Then, we have that 
\[\cSl\left(\bb{1}^{\oplus p}\oplus \Sigma \bb{1}^{\oplus q}\right)\simeq 0\]
if and only if $[(q+1)^{p+1}]\subseteq [\lambda]$ (which itself holds if and only if $(p+1,q+1)\in [\lambda]$).
\end{proposition}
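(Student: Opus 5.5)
The plan is to reduce to $\mo{Perf}(\bb{Q})$ and then do a purely combinatorial computation with \Cref{prop:omnibusSchur}, \Cref{Cor:Shifts} and \Cref{prop:Deligne1.10}.

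\emph{Reduction to $\mo{Perf}(\bb{Q})$.} Use the unique exact symmetric monoidal functor $\mo{Perf}(\bb{Q})\to\cat{C}$ and naturality of Schur functors. Then $\cSl(\bb{1}^{\oplus p}\oplus\Sigma\bb{1}^{\oplus q})$ is the image of $\cSl(\bb{Q}^{p}\oplus\Sigma\bb{Q}^{q})$. In $\mo{Perf}(\bb{Q})$ this object is a finite sum of shifts of $\bb{Q}$, so its image is a finite sum of shifts of $\bb{1}_{\cat{C}}$. Since $\cat{C}\neq 0$, we have $\bb{1}_{\cat{C}}\neq 0$, so the image vanishes if and only if the object vanishes in $\mo{Perf}(\bb{Q})$. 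In $\mo{Perf}(\bb{Q})$, a direct sum is nonzero iff some summand is nonzero, and a tensor product of nonzero objects is nonzero. The equivalence $[(q+1)^{p+1}]\subseteq[\lambda]\iff(p+1,q+1)\in[\lambda]$ is immediate, because diagrams are closed downward in both coordinates.

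\emph{The purely even case.} First show that $\cSm(\bb{Q}^{p})\neq 0$ iff $(p+1,1)\notin[\mu]$, i.e.\ iff $\mu$ has length $\le p$. For $p=1$: $\bb{Q}^{\otimes n}=\bb{Q}$ with trivial $\Sigma_n$-action, so only $\mu=(n)$ survives. For general $p$, write $\bb{Q}^{p}=\bb{Q}^{p-1}\oplus\bb{Q}$ and apply the sum formula of \Cref{prop:omnibusSchur}.
\begin{itemize}
\item If $\mu$ has length $\ge p+1$, \Cref{prop:Deligne1.10} applied with $(p-1)+0+1=p$ rows, together with induction, kills every summand.
\item If $\mu$ has length $\le p$, take $\mu'$ to be the first $p-1$ rows of $\mu$ and $\nu'$ the single row that remains. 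Then $\mu/\mu'$ is a translate of $\nu'$, so $[\mu\colon\mu',\nu']\ge 1$ by the Littlewood--Richardson rule, and this summand is nonzero.
\end{itemize}
Via \Cref{Cor:Shifts}, $\cS[\nu](\Sigma\bb{Q}^{q})\simeq\Sigma^{|\nu|}\cS[\nu^t](\bb{Q}^q)$. Hence $\cS[\nu](\Sigma\bb{Q}^q)\neq0$ iff $\nu^t$ has length $\le q$, i.e.\ iff $\nu_1\le q$, i.e.\ iff $(1,q+1)\notin[\nu]$.

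\emph{The main step.} By \Cref{prop:omnibusSchur},
\[\cSl(\bb{Q}^p\oplus\Sigma\bb{Q}^q)\simeq\bigoplus_{\mu,\nu}\left(\cSm(\bb{Q}^p)\otimes\cS[\nu](\Sigma\bb{Q}^q)\right)^{[\lambda\colon\mu,\nu]}.\]
\begin{itemize}
\item If $(p+1,q+1)\in[\lambda]$, apply \Cref{prop:Deligne1.10} with the quadruple $(p,0,0,q)$. Whenever $[\lambda\colon\mu,\nu]\ne0$, it gives $(p+1,1)\in[\mu]$ or $(1,q+1)\in[\nu]$, so every summand vanishes.
\item Conversely, suppose $(p+1,q+1)\notin[\lambda]$. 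Let $\mu$ be the first $p$ rows of $\lambda$ and $\nu$ the remaining rows $(\lambda_{p+1},\lambda_{p+2},\dots)$. Then $\nu_1=\lambda_{p+1}\le q$ and $\mu$ has length $\le p$, so both factors are nonzero. Also $[\lambda\colon\mu,\nu]\ge1$, since the skew shape $\lambda/\mu$ is a translate of $\nu$ and Littlewood--Richardson gives coefficient $1$.
\end{itemize}
The degenerate cases $p=0$ or $q=0$ are covered by the same argument, with the empty factor being $\bb{1}$ in degree $0$.

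The main obstacle is justifying the nonvanishing $[\lambda\colon\mu,\nu]\ne0$ in the converse direction. I would cite the Littlewood--Richardson rule for a skew shape that is a translated straight shape. Alternatively, one can argue via Frobenius reciprocity and restriction of the Specht module $V_\lambda$ to $\Sigma_{|\mu|}\times\Sigma_{|\nu|}$, using the standard filtration by skew Specht modules.
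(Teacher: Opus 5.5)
Your proof is correct and follows the paper's route. You make the identical reduction to $\mo{Perf}(\bb{Q})$, using that the image is a finite sum of shifts of $\bb{1}_{\cat{C}}\neq 0$; the paper then simply defers to Deligne's super-vector-space computation, which you carry out explicitly via the sum formula, \Cref{Cor:Shifts}, \Cref{prop:Deligne1.10} for vanishing, and the Littlewood--Richardson rule for non-vanishing.

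One small slip: in the even-case induction, \Cref{prop:Deligne1.10} must be applied with parameters $(p-1,0,1,0)$, so that $(p+1,1)\in[\mu]$ forces $(p,1)\in[\mu']$ or $(2,1)\in[\nu']$. The choice suggested by ``$(p-1)+0+1=p$'' would only yield $(1,1)\in[\nu']$, which kills nothing.
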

\begin{proof}
It suffices to prove the claim when $\cat{C}=\mo{Perf}(\bb{Q})$ by naturality of Schur functors, and since the object of interest is always in the image of the canonical functor $\mo{Perf}(\bb{Q})\to \cat{C}$ (which is conservative when $\cat{C}$ is not zero).  The result now follows by e.g. the exact same argument as in \cite[Corollary~1.9]{deligne}.
\end{proof}

\newpage
\section{Schur-finite Rational Rigid 2-Rings}

We begin by defining a condition which roughly tracks ``moderate growth'' for rational rigid 2-rings.
\begin{definition}\label{def:SchurFiniteness}
An object $X$ in a rational rigid 2-ring $\cat{C}$ is said to be \textit{Schur-finite} if there exists a partition $\lambda$ such that the Schur functor $\cSl$ annihilates $X$, that is $\cSl(X)\simeq 0$.  We say that a rational rigid 2-ring $\cat{C}$ is \textit{Schur-finite} if every (rigid) object $X\in\cat{C}$ is Schur-finite.
\end{definition}
Before proving the main theorem that generates examples for us, we take a short detour which will be useful when dealing with fiber sequences.
\subsection{A brief digression on filtered objects}  In this subsection, we give a lightning recollection on filtered and graded objects, referring the reader to \cite[Appendix~B]{burklund2022galoisreconstructionartintatemathbbrmotivic} and \cite[\textsection~1.2.2]{HA} for more details.
\begin{definition}
Let $\cat{C}$ be a rational rigid 2-ring.  Then the category $\cat{C}^{\mo{fil}}$ of \textit{filtered objects in $\cat{C}$} is the rational rigid 2-ring consisting of the compact objects in the presheaf category $\mo{Fun}(\bb{Z}_{\leq},\mo{Ind}(\cat{C}))$ of $\mo{Ind}(\cat{C})$-valued functors from the preorder category on $\bb{Z}$, with the Day convolution symmetric monoidal structure.  The category $\cat{C}^{\mo{gr}}$ of \textit{graded objects in $\cat{C}$} is similarly described as the compact objects in the presheaf category $\mo{Fun}(\bb{Z}^{op},\mo{Ind}(\cat{C}))$ on the discrete $\infty$-category $\bb{Z}$, with the Day convolution symmetric monoidal structure.
\end{definition}
\begin{remark}
To be explicit, we think of a filtered object in $\cat{C}$ as a diagram 
\[\ldots \to Z_i\to Z_{i+1}\to \ldots\]
with $Z_i\in\cat{C}$ for all $i$, $Z_i=0$ for all $i\ll 0$, and such that the maps $Z_i\to Z_{i+1}$ are equivalences for all $i\gg 0$.  Similarly, we think of a graded object as a collection $(Z_i)_{i\in\bb{Z}}$ of objects of $\cat{C}$ with $Z_i=0$ for almost all $i$.
\end{remark}
In the case of either graded or filtered objects, there is a shift functor taking an object $Z$ to the new object $Z(1)$, where $(Z(1))_{i}=Z_{i-1}$.  We will often abusively identify objects of $\cat{C}$ with graded objects in degree 0, so that $X(1)$ is the graded object $(Z_i)$ with $Z_1=X$, and $Z_i=0$ for $i\neq 1$.  Keeping with conventions set in \cite[Appendix~B]{burklund2022galoisreconstructionartintatemathbbrmotivic}, we denote the canonical map $\bb{1}_{\cat{C}^{\mo{fil}}}(1)\to \bb{1}_{\cat{C}^{\mo{fil}}}$ by $\tau$, and record the following proposition.
\begin{proposition}\label{prop:filtered objects}
There is a symmetric monoidal functor associated graded functor 
\[\mo{cofib}(\tau)\colon \cat{C}^{\mo{fil}}\to \cat{C}^{\mo{gr}}\] 
which is conservative on objects.
\end{proposition}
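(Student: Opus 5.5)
We build the functor from the universal property of Day convolution together with the cofiber of $\tau$, and then reduce conservativity to a completeness argument for filtered objects.

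\textbf{Step 1: the functor.} The object $\mo{cofib}(\tau)=\bb{1}_{\cat{C}^{\mo{fil}}}/\tau$ is an $\bb{E}_\infty$-algebra in $\mo{Ind}(\cat{C}^{\mo{fil}})$; this is standard, see \cite[Appendix~B]{burklund2022galoisreconstructionartintatemathbbrmotivic}. We identify the category of $\bb{1}/\tau$-modules in $\mo{Fun}(\bb{Z}_{\leq},\mo{Ind}(\cat{C}))$ with $\mo{Fun}(\bb{Z}^{op},\mo{Ind}(\cat{C}))$, symmetric monoidally for the Day convolution. Under this identification the base change $Z\mapsto Z\otimes \bb{1}/\tau$ sends a filtered object $(Z_i)$ to its associated graded $(\mo{cofib}(Z_{i-1}\to Z_i))_i$. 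Base change along a commutative algebra is symmetric monoidal and preserves compact objects. Hence it restricts to a symmetric monoidal functor $\cat{C}^{\mo{fil}}\to\cat{C}^{\mo{gr}}$.

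\textbf{Step 2: conservativity.} Since the functor is exact, it suffices to show that if $Z$ is a compact filtered object whose associated graded vanishes, then $Z\simeq 0$. Vanishing of the associated graded means every transition map $Z_{i-1}\to Z_i$ is an equivalence. Compact objects have $Z_i=0$ for $i\ll 0$, as in the description of filtered objects above. Inducting upward along the equivalences then gives $Z_i\simeq 0$ for all $i$. Note that without the boundedness condition this would fail: a constant filtration has vanishing associated graded but is nonzero.

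\textbf{Main obstacle.} The delicate point is the identification of $\bb{1}/\tau$-modules with graded objects, and checking that compact filtered objects are exactly the ones described above. The description of compact objects needs a small argument: the compact objects are generated under finite colimits and retracts by the shifted representables $X(n)$, $X\in\cat{C}$, each of which is bounded below and eventually constant. For the module identification we invoke \cite[Appendix~B]{burklund2022galoisreconstructionartintatemathbbrmotivic} rather than reproving it.
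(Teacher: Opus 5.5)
Your proposal is correct and follows essentially the same route as the paper. Symmetric monoidality of the associated graded is taken from Burklund--Hahn--Senger (Lemma~B.5). Conservativity is the paper's own argument run contrapositively: a compact filtered object has $Z_j\simeq 0$ for $j\ll 0$, so the vanishing of every $\mo{cofib}(Z_{i-1}\to Z_i)$ forces $Z\simeq 0$.

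Your justification that compact objects are bounded below, via the generators $X(n)$, is an accurate addition that the paper leaves implicit. So is your observation that a constant filtration shows this boundedness is essential.
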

\begin{proof}
That taking associated graded/cofiber of $\tau$ is symmetric monoidal is proven in \cite[Lemma~B.5]{burklund2022galoisreconstructionartintatemathbbrmotivic}.

To see that $\mo{cofib}(\tau)$ is conservative, suppose that $Z_{\bdot}$ is a non-zero filtered object.  Then $Z_i$ is non-zero for some $i$, and as $Z_j\simeq 0$ for $j\ll 0$, there must be a point $k$ between a small enough $j$ and our given $i$ such that $Z_k\to Z_{k+1}$ is not an equivalence, in which case the $k+1$st term of the associated graded complex of $Z_{\bdot}$ is non-zero as well.
\end{proof}
\begin{fact}[{\cite[Lemma~B.5]{burklund2022galoisreconstructionartintatemathbbrmotivic}}]
There is a symmetric monoidal functor ``underlying object functor''
\[(-)[\tau^{-1}]\colon \cat{C}^{\mo{fil}}\to \cat{C}\]
taking a filtered object $Z_{\bdot}$ to the ``$\tau$-inversion'' $Z_{\bdot}[\tau^{-1}]\coloneqq \varinjlim_{i}Z_i$ (which by assumption agrees with $Z_j$ for $j\gg 0$).
\end{fact}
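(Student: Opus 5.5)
The plan is to build $(-)[\tau^{-1}]$ first on the large presheaf categories, where it is a left Kan extension, and only afterwards restrict it to compact objects. Let $\pi\colon \bb{Z}_{\leq}\to \ast$ be the projection to the terminal category. It is a symmetric monoidal functor, where $\bb{Z}_{\leq}$ carries the addition monoidal structure that underlies the Day convolution defining $\cat{C}^{\mo{fil}}$. Left Kan extension along $\pi$ is the functor
\[\pi_!\colon \mo{Fun}(\bb{Z}_{\leq},\mo{Ind}(\cat{C}))\to \mo{Ind}(\cat{C}),\qquad Z_{\bdot}\mapsto \varinjlim_i Z_i.\]
I would invoke the general fact that, for a symmetric monoidal functor between small symmetric monoidal $\infty$-categories, left Kan extension is canonically symmetric monoidal for the Day convolution structures (\cf \cite{HA}, and the argument in \cite[Lemma~B.5]{burklund2022galoisreconstructionartintatemathbbrmotivic}). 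The reason is that Day convolution is itself the left Kan extension of the external tensor product along $\otimes$, and left Kan extensions compose. This supplies all the coherence data at once, so no separate check of higher compatibilities is needed.

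As a sanity check on objects, I would verify the formula directly. By definition $(Z\otimes W)_n\simeq \varinjlim_{i+j\leq n} Z_i\otimes W_j$. Taking $\varinjlim_n$ turns this into the colimit over all of $\bb{Z}\times\bb{Z}$ of $Z_i\otimes W_j$. Since $\otimes$ preserves colimits separately in each variable in $\mo{Ind}(\cat{C})$, this colimit is $(\varinjlim Z_i)\otimes(\varinjlim W_j)$. The unit of the Day convolution structure is the constant filtered object $\bb{1}$ in degrees $\geq 0$ and $0$ below, and its colimit is $\bb{1}$. This also identifies the functor as the localization inverting $\tau$: the colimit of $Z\xto{\tau}Z(-1)\xto{\tau}\cdots$ is $Z$ with $\tau$ inverted, matching the notation $Z_{\bdot}[\tau^{-1}]$.

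Next I would show that $\pi_!$ carries $\cat{C}^{\mo{fil}}$ into $\cat{C}$. The compact objects of the presheaf category are generated under finite colimits and retracts by the objects $c(k)$, namely the left Kan extension of $c\in\cat{C}$ from degree $k$. Such an object is $c$ in degrees $\geq k$ and $0$ below, so its colimit is $c\in\cat{C}$. Since $\pi_!$ is exact and $\cat{C}$ is thick in $\mo{Ind}(\cat{C})$, every compact filtered object is sent into $\cat{C}$. Moreover, each compact filtered object is bounded below and eventually constant, so its colimit agrees with $Z_j$ for $j\gg 0$.

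The step I expect to be the main obstacle is making the symmetric monoidal structure on $\pi_!$ fully coherent, rather than just checking the formula on objects. My approach is to cite the general Day convolution and left Kan extension result and not attempt any hand construction.
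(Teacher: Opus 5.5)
Your proposal is correct and follows the paper's route. The paper does not prove this fact itself; it cites \cite[Lemma~B.5]{burklund2022galoisreconstructionartintatemathbbrmotivic}, but its proof of the graded analogue \Cref{lem:grading} is exactly your argument: left Kan extension along the symmetric monoidal functor to the point is symmetric monoidal for Day convolution. Your extra check that $\pi_!$ sends compact filtered objects into $\cat{C}$, via the generators $c(k)$ and thickness of $\cat{C}\subseteq\mo{Ind}(\cat{C})$, correctly supplies the restriction step the paper leaves implicit.
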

One final standard fact we will need is:
\begin{lemma}\label{lem:grading}
The natural functor forgetting the grading 
\[(Z_i)_{i\in\bb{Z}}\mapsto \bigoplus_i Z_i\colon \cat{C}^{\mo{gr}}\to\cat{C}\]
is symmetric monoidal and conservative.
\end{lemma}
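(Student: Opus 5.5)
We need to prove that the functor $\cat{C}^{\mo{gr}}\to\cat{C}$, $(Z_i)_{i\in\bb{Z}}\mapsto\bigoplus_i Z_i$, is symmetric monoidal and conservative. The plan is to obtain it as the left Kan extension along the map $\bb{Z}\to\ast$ and then check the two properties separately.

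\emph{Monoidality.} Regard $\bb{Z}$ as a discrete symmetric monoidal $\infty$-category under addition. The terminal map $\bb{Z}\to\ast$ is symmetric monoidal, so left Kan extension along it,
\[
\mo{Fun}(\bb{Z},\mo{Ind}(\cat{C}))\to\mo{Ind}(\cat{C}),
\]
is symmetric monoidal for the Day convolution structures. This is the standard fact that left Kan extension along a symmetric monoidal functor is symmetric monoidal, since $\otimes$ commutes with colimits in each variable in $\mo{Ind}(\cat{C})$; see \cite[\textsection~2.2.6]{HA}. Because $\bb{Z}$ is discrete, this left Kan extension is exactly $(Z_i)\mapsto\bigoplus_i Z_i$.

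Concretely, the Day convolution is $(Z\otimes W)_n=\bigoplus_{i+j=n}Z_i\otimes W_j$. Hence $\bigoplus_n (Z\otimes W)_n\simeq\bigl(\bigoplus_i Z_i\bigr)\otimes\bigl(\bigoplus_j W_j\bigr)$, with the unit $\bb{1}(0)$ going to $\bb{1}$. This is compatible with the symmetry: the symmetric monoidal structure on $\bb{Z}$ is strictly commutative and carries no extra signs, so the swap on $Z_i\otimes W_j$ maps to the swap on the total tensor product.

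The functor restricts to compact objects. A compact graded object has $Z_i\in\cat{C}$ with $Z_i=0$ for almost all $i$, so the sum is finite and lands in $\cat{C}$.

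\emph{Conservativity.} For stable categories, conservativity on morphisms reduces to detecting zero objects, because the functor is exact and cofibers are computed degreewise. Suppose $\bigoplus_i Z_i\simeq 0$. Then each $Z_i$ is a retract of $\bigoplus_i Z_i$, so each $Z_i\simeq 0$, and therefore $Z\simeq 0$. More generally, if $f\colon Z\to W$ becomes an equivalence after summing, then the cofiber of $f$, computed degreewise, sums to zero. By the previous sentence this cofiber vanishes, so $f$ is an equivalence.

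The only subtle point is the symmetric monoidal coherence, and I would settle it by citing the Day convolution and Kan extension result rather than by checking it by hand.
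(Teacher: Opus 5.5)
Your proposal is correct and matches the paper's proof: both get symmetric monoidality by identifying the functor with left Kan extension along the symmetric monoidal map $\bb{Z}\to\ast$, which is symmetric monoidal for Day convolution (\cite[\textsection~2.2.6]{HA}). Your argument that each $Z_i$ is a retract of $\bigoplus_i Z_i$ simply spells out the conservativity that the paper calls clear.
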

\begin{proof}
Conservativity is clear.  Symmetric monoidality follows e.g. from the fact that this functor is given by left Kan extension along the symmetric monoidal functor $\bb{Z}\to *$, and left Kan extension is symmetric monoidal for the Day convolution symmetric monoidal structure (e.g. by the definition of the Day convolution via left Kan extension from \cite[\textsection~2.2.6]{HA} and commutation properties of left Kan extension from the universal property).

\end{proof}

\subsection{Back to the main story}  We can now prove the main theorem of this section.  A special case of the following result was proven in \cite[Lemma~3.6]{mazza2004schur}, namely the case when $\cat{C}$ is the derived category of a $\bb{Q}$-linear abelian $\otimes$-category.

\begin{theorem}\label{thm:genimpliesall}
Suppose that a rational rigid 2-ring $\cat{C}$ is generated as an idempotent complete rigid symmetric monoidal stable $\infty$-category by a set of objects $\{X_i\}$ such that each $X_i$ is Schur-finite.  Then $\cat{C}$ is Schur-finite.
\end{theorem}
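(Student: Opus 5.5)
The plan is to show that the full subcategory $\cat{S}\subseteq\cat{C}$ of Schur-finite objects contains the $X_i$ and is closed under the operations that generate $\cat{C}$: shifts, direct sums, retracts, tensor products, duals, and cofibers. It then contains all of $\cat{C}$.

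\emph{Easy closure properties.} Retracts are handled by naturality, since $\cSl(Y)$ is a retract of $\cSl(Y\oplus Y')$. Shifts follow from \Cref{Cor:Shifts}: if $\cSl(X)=0$ then $\cS[\lambda^t](\Sigma X)\simeq\Sigma^n\cSl(X)=0$, and similarly for $\Sigma^{-1}$. For duals, $\cSl(X^\vee)\simeq\cSl(X)^\vee$, because duality is a symmetric monoidal equivalence $\cat{C}\op\simeq\cat{C}$ and Schur functors are natural. For sums and tensor products, suppose $\cSm(X)=0$ and $\cS[\nu](Y)=0$. By \Cref{prop:Inclusions} and the first formula of \Cref{prop:omnibusSchur}, $\cS[\lambda](X)=0$ whenever $[\mu]\subseteq[\lambda]$. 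Fixing $(p+1,q+1)\in[\mu]$ and $(r+1,s+1)\in[\nu]$ (for example the corner boxes), \Cref{prop:Deligne1.10} together with the second formula of \Cref{prop:omnibusSchur} shows that $\cSl(X\oplus Y)=0$ whenever $(p+r+1,q+s+1)\in[\lambda]$. Tensor products I would handle as in Deligne. Every Schur-finite $X$ satisfies $\cS[(p+1)^{q+1}](X)=0$ for some rectangle. One then uses the third formula of \Cref{prop:omnibusSchur} and the fact that $V_\mu\otimes V_\nu\supseteq V_\lambda$ forces $\mu,\nu$ to be large once $\lambda$ contains a large rectangle. Concretely, a partition avoiding the box $(a,b)$ has at most roughly $(a+b)^n$-type dimension counts, and one can compare dimensions of $V_\lambda$. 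Alternatively, I would cite \cite[Proposition~1.19]{deligne2002categories}, which gives this purely combinatorially in any $\bb{Q}$-linear additive symmetric monoidal category.

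\emph{The main obstacle: cofibers.} Let $X\xto{f}Y\to Z$ be a cofiber sequence with $X,Y$ Schur-finite. Schur functors are not exact, so the fiber sequence cannot be used directly. I would use the filtered objects of the previous subsection instead. Form the filtered object $W$ with $W_0=Y$, $W_1=Z$ (zero below $0$, constant above $1$), whose transition map is $Y\to Z$. Then $W[\tau^{-1}]\simeq Z$, and its associated graded is $Y$ in degree $0$ and $\Sigma X$ in degree $1$, i.e.\ $Y\oplus(\Sigma X)(1)$. One should check that $W$ is compact, i.e.\ lies in $\cat{C}^{\mo{fil}}$. By the sum and shift cases, applied in the rational 2-ring $\cat{C}^{\mo{gr}}$ (the grading shift $(1)$ is tensoring with an invertible even object, so the tensor case applies), some $\cSl$ kills $\mo{cofib}(\tau)(W)$. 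Since $\mo{cofib}(\tau)$ is symmetric monoidal, it commutes with $\cSl$, and by \Cref{prop:filtered objects} it is conservative, so $\cSl(W)=0$. Applying the symmetric monoidal functor $(-)[\tau^{-1}]$ then gives $\cSl(Z)=0$.

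The only delicate point is whether conservativity on objects suffices. It does, since we only need to detect that $\cSl(W)\simeq0$. The rest is the bookkeeping above, which also yields the explicit bounds used later.
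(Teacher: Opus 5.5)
Your proposal follows essentially the same route as the paper: closure under summands, shifts via \Cref{Cor:Shifts}, sums/tensors/duals via Deligne, and cofibers via the filtered object $\ldots\to 0\to Y\to Z\to Z\to\ldots$ with associated graded $Y\oplus(\Sigma X)(1)$. The differences are minor. You stay in $\cat{C}^{\mo{gr}}$ using that $\bb{1}(1)$ is even invertible, where the paper forgets the grading via \Cref{lem:grading}. Also, in your direct-sum argument you must first enlarge $\mu$ and $\nu$ to rectangles, since $(p+1,q+1)\in[\alpha]$ only forces $[\mu]\subseteq[\alpha]$ when $\mu=(q+1)^{p+1}$.
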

\begin{proof}
It suffices to show that the property of being Schur-finite is closed under taking tensor products, duals, summands, suspensions and cofibers.  

For summands, this is clear since $\cSl(X)$ is a summand of $\cSl(X\oplus Y)$, so the vanishing of the latter implies the vanishing of the former.

For tensor products, sums and duals, this essentially follows from \cite{deligne2002categories}.  For instance, one can use that an object being zero is detected at the level of the homotopy category, and that $\cat{C} \to h\cat{C}$ is symmetric monoidal and additive, so that it commutes with Schur functors, allowing us to reduce to the case of a $\bb{Q}$-linear symmetric monoidal additive 1-category, in which case the result follows from \cite[Corollary~1.13]{deligne2002categories} and \cite[Proposition~1.18]{deligne2002categories}.

For suspensions, recall from \Cref{Cor:Shifts} that $\cSl(\Sigma X)\simeq \Sigma^{|\lambda|}\cS[\lambda^{t}](X)$, from which the result follows. 

Finally, suppose we are given a fiber sequence $X\to Y\to Z$ such that $X$ and $Y$ are Schur-finite.  We can define a filtered object $\mo{Fil}(Z)$ in $\cat{C}$ attached to this fiber sequence by
\[\mo{Fil}(Z)\coloneqq \ldots\to 0\to Y\to Z\to Z\to\ldots,\]
whose associated graded object is $Y\oplus \Sigma(X)(1)$.  By \Cref{prop:filtered objects}, to see that there is a Schur functor annihilating $\mo{Fil}(Z)$, it suffices to find a Schur functor which annihilates the associated graded, and further by \Cref{lem:grading}, it suffices to show that there is a Schur functor which annihilates $Y\oplus \Sigma X$.  Since $Y$ and $\Sigma(X)$ are Schur-finite, we know already that their sum is too.  Fixing any Schur functor which annihilates $Y\oplus \Sigma X$, we see that it also annihilates $\mo{Fil}(Z)$, and inverting $\tau$ to get back to $\cat{C}$, we find that the chosen Schur functor must also annihilate $Z$, so $Z$ is Schur-finite as well.
\end{proof}
One can be explicit about the argument involving fiber sequences to give the following.
\begin{corollary}\label{cor:explicitpresentation}
Let
\[Y\to\bb{1}\to X\]
be a fiber sequence in a rational rigid 2-ring $\cat{C}$, and suppose that $\cS[(p)^q](X)\simeq 0$.  Then, we must have $\cS[(q)^{p+1}](Y)\simeq 0$.
\end{corollary}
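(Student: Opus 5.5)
The plan is to run the filtration argument from the proof of \Cref{thm:genimpliesall} explicitly, keeping track of which Schur functor is produced. Rotating the fiber sequence gives a cofiber sequence $\bb{1}\to X\to \Sigma Y$. As in that proof, consider the filtered object
\[\ldots\to 0\to X\to \Sigma Y\to \Sigma Y\to\ldots,\]
whose associated graded is $X\oplus \Sigma\bb{1}(1)$. By \Cref{prop:filtered objects} and \Cref{lem:grading}, followed by inverting $\tau$, any Schur functor that annihilates $X\oplus\Sigma\bb{1}$ also annihilates $\Sigma Y$. It therefore suffices to find a partition $\lambda$ with $\cSl(X\oplus\Sigma\bb{1})\simeq 0$, chosen so that \Cref{Cor:Shifts} converts this vanishing into the claimed one for $Y$.

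To find $\lambda$, expand with \Cref{prop:omnibusSchur}:
\[\cSl(X\oplus\Sigma\bb{1})\simeq\bigoplus_{\mu,\nu}\left(\cSm(X)\otimes\cS[\nu](\Sigma\bb{1})\right)^{[\lambda\colon\mu,\nu]}.\]
We need every summand with $[\lambda\colon\mu,\nu]\neq 0$ to vanish. Two facts control the factors.
\begin{enumerate}
\item Since $\cS[(p)^q](X)\simeq 0$, \Cref{prop:Inclusions} together with the product formula of \Cref{prop:omnibusSchur} shows that $\cSm(X)\simeq 0$ whenever $[(p)^q]\subseteq[\mu]$, that is, whenever $(q,p)\in[\mu]$.
\item By \Cref{lem:SchurToShifts}, $\cS[\nu](\Sigma\bb{1})\simeq 0$ whenever $\nu$ is not a single column, that is, whenever $(1,2)\in[\nu]$.
\end{enumerate}

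Now apply \Cref{prop:Deligne1.10} with $(p'+1,q'+1)=(q,p)$ and $(r+1,s+1)=(1,2)$. Its hypothesis becomes $(q,p+1)\in[\lambda]$, so the rectangular partition $\lambda=(p+1)^q$ qualifies. For this $\lambda$, every pair $(\mu,\nu)$ with $[\lambda\colon\mu,\nu]\neq0$ satisfies $(q,p)\in[\mu]$ or $(1,2)\in[\nu]$, so every summand vanishes and $\cS[(p+1)^q](X\oplus\Sigma\bb{1})\simeq 0$. Hence $\cS[(p+1)^q](\Sigma Y)\simeq 0$.

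Finally, \Cref{Cor:Shifts} gives $\cS[(p+1)^q](\Sigma Y)\simeq\Sigma^{n}\cS[(q)^{p+1}](Y)$, since the transpose of $(p+1)^q$ is $(q)^{p+1}$. Therefore $\cS[(q)^{p+1}](Y)\simeq 0$.

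The step I expect to need the most care is the index bookkeeping in \Cref{prop:Deligne1.10}. Its parameters must be nonnegative, which requires $p,q\geq 1$; if $p=0$ or $q=0$ the hypothesis is degenerate and should be handled separately. One must also check the row/column convention (whether the condition reads $(q,p)$ or $(p,q)\in[\mu]$) against \Cref{prop:SumsofUnitShift}, so that the transpose lands exactly on $(q)^{p+1}$.
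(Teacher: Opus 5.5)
Your proof is correct and is the paper's argument up to a suspension: the paper filters $Y$ by $\Sigma^{-1}X$ (associated graded $\Sigma^{-1}X\oplus\bb{1}(1)$), applies \Cref{Cor:Shifts} at the start, and uses \Cref{prop:Deligne1.10} with $(q)^{p+1}$ directly, whereas you filter $\Sigma Y$ by $X$, use $(p+1)^q$, and transpose back at the end. The degenerate case you flag is vacuous: for the empty partition the Schur functor is $\bb{1}$, so the hypothesis forces $\cat{C}\simeq 0$.
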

\begin{proof}
Consider the filtered object 
\[\mo{Fil}(Y)\coloneqq \ldots\to 0\to \Sigma^{-1}X\to Y\to Y\to\ldots,\]
with associated graded $\Sigma^{-1}X\oplus \bb{1}(1)$.  By \Cref{prop:filtered objects} and \Cref{lem:grading}, it suffices to prove that $\cS[(q)^{p+1}]$ annihilates the object $\Sigma^{-1}X\oplus \bb{1}$ in $\cat{C}$.  Using \Cref{Cor:Shifts}, we know that $\cS[(q)^p]$ annihilates $\Sigma^{-1}(X)$.  Now, by \Cref{prop:omnibusofExamples}, we have that 
\[\cS[(q)^{p+1}](\Sigma^{-1}X\oplus\bb{1})=\bigoplus \left(\cSm(\Sigma^{-1}X)\otimes \cS[\nu](\bb{1})\right)^{[(q)^{p+1}\colon\!\mu,\nu]}.\]
Now, applying \Cref{prop:Deligne1.10}, since $(p+1,q)\in [(q)^{p+1}]$, any partitions $\mu$, $\nu$ with $[(q)^{p+1}\colon \mu,\nu]\neq 0$ must have either $(p,q)\in [\mu]$, in which case $\cSm(\Sigma^{-1}X)\simeq 0$, or else $(2,1)\in [\nu]$, in which case $\cS[\nu](\bb{1})\simeq 0$.  In either case, the summands on the right hand side are all zero, so that $\cS[(q)^{p+1}](\Sigma^{-1}X\oplus \bb{1})\simeq 0$, as desired.
\end{proof}

\subsection{Examples of Schur-finite rational rigid 2-rings}
We now explain how to use \Cref{thm:genimpliesall} to construct new examples of Schur-finite rational rigid 2-rings from old ones, as well as list a number of examples of Schur-finite rational rigid 2-rings appearing in the literature.
\begin{proposition}\label{prop:omnibusofExamples}
Let $\cat{C}$ be a Schur-finite rational rigid 2-ring.  Then the following are examples of Schur-finite rational rigid 2-rings.
\begin{enumerate}
\item  Any localization of $\cat{C}$ is Schur-finite.
\item  The category $\mo{Perf}(R)$ for a rational $\bb{E}_{\infty}$-ring spectrum $R$ is Schur-finite.
\item  More generally, if we are given any commutative algebra object $A\in\mo{CAlg}(\mo{Ind}(\cat{C}))$, then the category of $A$-modules in $\cat{C}$, $\mo{Perf}_{\cat{C}}(A)$, is Schur-finite.
\end{enumerate}
\end{proposition}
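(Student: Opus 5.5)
The common mechanism for all three items is naturality of Schur functors. Any symmetric monoidal additive functor $F\colon\cat{C}\to\cat{D}$ between rational 2-rings commutes with Schur functors, so $F(\cSl(X))\simeq\cSl(F(X))$. In particular, if $\cSl(X)\simeq 0$ then $\cSl(F(X))\simeq 0$, and Schur-finiteness of an object is preserved under such functors. Combined with \Cref{thm:genimpliesall}, it therefore suffices in each case to exhibit the target category as generated, as an idempotent complete rigid symmetric monoidal stable $\infty$-category, by the image of a Schur-finite category under such a functor.

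For (a), a localization $\cat{C}\to\cat{C}/\cat{J}$ (Verdier quotient by a thick tensor ideal, followed by idempotent completion) is essentially surjective up to retracts. Every object of the target is a summand of the image of an object of $\cat{C}$, so it is Schur-finite, using the summand closure already checked in the proof of \Cref{thm:genimpliesall}. Alternatively, one can simply note that the image generates and invoke \Cref{thm:genimpliesall}.

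For (c), the base change functor $A\otimes-\colon\cat{C}\to\mo{Perf}_{\cat{C}}(A)$ is symmetric monoidal and exact. Its image, namely the free modules $A\otimes X$ for $X\in\cat{C}$, generates $\mo{Perf}_{\cat{C}}(A)$ under finite colimits, shifts and retracts, essentially by definition of perfect modules. Each $A\otimes X$ is Schur-finite by naturality, so \Cref{thm:genimpliesall} gives the claim. I also need $\mo{Perf}_{\cat{C}}(A)$ to be a rational rigid 2-ring; rigidity holds because it is generated by the dualizable objects $A\otimes X$ and dualizable objects form a thick subcategory.

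Item (b) is the special case $\cat{C}=\mo{Perf}(\bb{Q})$ with $A=R$, once I check that $\mo{Perf}(\bb{Q})$ is Schur-finite. Since $\mo{Perf}(\bb{Q})$ is generated by $\bb{1}$, I only need $\cS[(1)^2](\bb{1})=\bigwedge^2\bb{1}\simeq 0$. This follows from \Cref{prop:SumsofUnitShift} with $p=1$, $q=0$, because $(2,1)\in[(1)^2]$; it also follows directly from \Cref{lem:SchurToShifts} together with \Cref{Cor:Shifts}. Alternatively, I can apply \Cref{thm:genimpliesall} to $\mo{Perf}(R)$ directly, since it is generated by the unit $R$ and $\bigwedge^2 R=0$ by naturality from $\mo{Perf}(\bb{Q})$.

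The step I expect to need the most care is the generation claim in (c): the precise meaning of $\mo{Perf}_{\cat{C}}(A)$ (compact $A$-modules in $\mo{Ind}(\cat{C})$) and the fact that these are exactly the thick closure of the free modules on objects of $\cat{C}$. This is standard, because $\mo{Ind}(\cat{C})$ is compactly generated by $\cat{C}$, so $\mo{Mod}_A$ is compactly generated by the $A\otimes X$. Everything else is formal.
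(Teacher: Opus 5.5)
Your proposal is correct and follows the paper's argument. Everything reduces to \Cref{thm:genimpliesall} through the fact that $\mo{Perf}_{\cat{C}}(A)$ is generated by the image of $\cat{C}$, and (b) is obtained from (c); you usefully make explicit two checks the paper leaves implicit, namely that Schur functors commute with symmetric monoidal exact functors and that $\mo{Perf}(\bb{Q})$ is itself Schur-finite. The only cosmetic difference is that you handle (a) directly by the retract/summand argument, whereas the paper treats it as the instance of (c) given by the idempotent algebra of the localization.
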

\begin{proof}
All of the points follow from \Cref{thm:genimpliesall}, with (a) and (b) being special cases of (c), and (c) following from the fact that $\mo{Perf}_{\cat{C}}(A)$ is generated by the image of $\cat{C}\to \mo{Perf}_{\cat{C}}(A)$.
\end{proof}

It remains conjectural whether or not Voevodsky's triangulated category $\mo{DM}_{\bb{Q}}$ of $\bb{Q}$-linear mixed motives over a field is Schur-finite, but this would follow from other standard conjectures such as the existence of a motivic t-structure.  As far as applications to categories arising from motivic homotopy theory go, we obtain the following
\begin{corollary}
Suppose that $X$ is a scheme such that the rational (Morel-Voevodsky) motivic stable homotopy category of $X$, $\mo{SH}(X)_{\bb{Q}}^{\omega}$, is Schur-finite (e.g., $X=\mo{Spec}(k)$ for $k$ an algebrically closed field, assuming Schur-finiteness of $\mo{DM}_{\bb{Q}}$).  Then for any rational motivic $\bb{E}_{\infty}$-algebra $A\in\mo{CAlg}(\mo{SH}(X)_{\bb{Q}})$, the category $\mo{Perf}_{\mo{SH}(X)_{\bb{Q}}}(A)$ is Schur-finite.
\end{corollary}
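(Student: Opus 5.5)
The plan is to reduce directly to \Cref{prop:omnibusofExamples}(c), with $\mo{SH}(X)_{\bb{Q}}^{\omega}$ in the role of the Schur-finite rational rigid 2-ring $\cat{C}$. I would proceed in three steps.

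First, I would check that $\mo{SH}(X)_{\bb{Q}}^{\omega}$ is a rational rigid 2-ring, so that the hypotheses of the proposition make sense. It is rational and stable. Rigidity needs slightly more care: compact and dualizable objects need not coincide over an arbitrary base. If that happens, I would replace $\mo{SH}(X)_{\bb{Q}}^{\omega}$ by its full subcategory of dualizable objects, which is what the hypothesis should be read as referring to. Its Ind-category maps to $\mo{SH}(X)_{\bb{Q}}$ compatibly with commutative algebras, since any $A \in \mo{CAlg}(\mo{SH}(X)_{\bb{Q}})$ defines a commutative algebra in $\mo{Ind}$ of the rigid part by restricting the Yoneda embedding.

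Second, I would identify $\mo{Perf}_{\mo{SH}(X)_{\bb{Q}}}(A)$ with $\mo{Perf}_{\cat{C}}(A)$ as used in \Cref{prop:omnibusofExamples}(c). That is, it should be the idempotent-complete stable subcategory of $\mo{Mod}_A(\mo{SH}(X)_{\bb{Q}})$ generated by the free modules $A\otimes c$ with $c\in\cat{C}$. This is the definition adopted in the paper, under which these free modules generate the category.

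Third, I would invoke the proposition. The base-change functor $A\otimes - \colon \cat{C}\to \mo{Perf}_{\cat{C}}(A)$ is symmetric monoidal and additive, so it commutes with Schur functors. Each generator $A\otimes c$ is therefore annihilated by whatever Schur functor annihilates $c$. \Cref{thm:genimpliesall} then gives Schur-finiteness of the whole category.

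For the parenthetical example, I would use that over an algebraically closed field $k$ there is an equivalence $\mo{SH}(k)_{\bb{Q}}\simeq \mo{DM}(k)_{\bb{Q}}$, by Morel's splitting: the minus part vanishes since $-1$ is a square. Hence Schur-finiteness of $\mo{DM}_{\bb{Q}}$ transfers across this equivalence.

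The main obstacle is purely definitional. It lies in the first step's rigidity issue: making sure $\mo{Perf}_{\mo{SH}(X)_{\bb{Q}}}(A)$ is generated by images of Schur-finite dualizable objects, and is not merely built from compact ones. I would handle this by taking $\cat{C}$ to be the dualizable objects throughout.
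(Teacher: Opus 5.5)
Your proposal is correct and takes the paper's approach: the paper's entire proof is the appeal to \Cref{prop:omnibusofExamples}(c), which is \Cref{thm:genimpliesall} applied to the generating free modules $A\otimes c$ of $\mo{Perf}_{\cat{C}}(A)$. Your rigidity detour is harmless but not needed on the paper's reading, since Schur-finiteness is only defined for rational rigid 2-rings and the hypothesis therefore already presupposes that $\mo{SH}(X)_{\bb{Q}}^{\omega}$ is rigid; your justification of the parenthetical example via Morel's splitting is a reasonable supplement that the paper leaves implicit.
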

\begin{proof}
This follows from \Cref{prop:omnibusofExamples}(c).
\end{proof}
Another prominent class of examples is given by rigid 2-rings attached to $\bb{Q}$-linear symmetric tensor categories of moderate growth, which Deligne proves in the paper \cite{deligne2002categories} (which motivates most of the techniques in this paper) to be super-Tannakian.
\begin{example}
If $\cat{A}$ is an abelian $\otimes$-category of moderate growth, then both the bounded homotopy category $K^b(\cat{A})$ and the bounded derived category $\mc{D}^b(\cat{A})$ are both Schur-finite.  Indeed, this follows from the fact that the image of $\cat{A}$ under the canonical inclusion generates $K^b(\cat{A})$ (resp. $\mc{D}^b(\cat{A})$), with the inclusion functor being symmetric monoidal.
\end{example}

Of course, there are many more examples that can be made by starting with various Schur-finite bases, such as graded or filtered objects, or symmetric monoidal deformations of Schur-finite rational rigid 2-rings a la \cite{patchkoria2025adams}, etc.

\newpage
\section{Homological Spectra in the Schur-finite Case}
In this section, we will prove nice properties about the homological spectrum of a Schur-finite rational rigid 2-ring.  We make the following definition, which is motivated by ideas of Deligne from \cite{deligne2002categories}.
\begin{definition}
For a rational rigid 2-ring $\cat{C}$, and an object $M\in\cat{C}$, say that $\bb{1}$ \textit{locally splits off of $M$} if there exists a non-zero compact commutative algebra $A\in \left(\mo{CAlg}(\mo{Ind}(\cat{C}))\right)^{\omega}$ such that $A\otimes M\simeq A\oplus M^{\prime}$ for some $A$-module $M^{\prime}$.

If $M$ and $N$ are two objects in $\cat{C}$, we say that $M$ and $N$ are \textit{locally isomorphic} if there exists a non-zero compact algebra $A$ and an $A$-module isomorphism $A\otimes M\simeq A\otimes N$.
\end{definition}
\begin{warning}
The use of the word ``locally'' here is a bit misleading.  Indeed, the compact algebra $A$ can miss a great many points of the (homological or Balmer) spectrum of $\cat{C}$, since ``locally'' really means that the property holds on some open subset of the homological spectrum with respect to the constructible topology.  Nevertheless, this notion will be of great use to us, where we will focus on categories which are points with respect to this topology.
\end{warning}

\begin{theorem}\label{thm:CompactCAlgLocallySplit}
Let $\cat{C}$ be a rational rigid 2-ring, and take some object $M\in\cat{C}$.  Then $\bb{1}$ locally splits off of $M$ if and only if $\mo{Sym}^n(M)\neq 0$ for all $n\geq 0$.
\end{theorem}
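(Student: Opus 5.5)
For the forward direction, suppose $A\neq 0$ is a compact commutative algebra with $A\otimes M\simeq A\oplus M'$ as $A$-modules. I will apply the symmetric power $\mo{Sym}^n_A$ in the symmetric monoidal category of $A$-modules. Schur functors commute with the symmetric monoidal base change $-\otimes A$, so $\mo{Sym}^n_A(A\otimes M)\simeq A\otimes \mo{Sym}^n(M)$. On the other hand, \Cref{prop:omnibusSchur} applied to $A\oplus M'$ exhibits $\mo{Sym}^n_A(A)\otimes_A \mo{Sym}^0_A(M')\simeq A$ as a summand. Since $A\neq 0$, this forces $A\otimes\mo{Sym}^n(M)\neq 0$, hence $\mo{Sym}^n(M)\neq 0$ for all $n$.

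For the converse I will follow Deligne's trick and build the universal algebra over which $\bb{1}$ splits off $M$. Let
\[B\coloneqq \mo{Sym}(M^{\vee}\oplus M)=\bigoplus_{a,b\geq 0}\mo{Sym}^a(M^{\vee})\otimes\mo{Sym}^b(M),\]
the free commutative algebra in $\mo{Ind}(\cat{C})$. It carries two tautological maps. The first is $x\colon \bb{1}\to B\otimes M$, adjoint to $M^\vee\to B$. The second is $y\colon M\to B$, which extends to a $B$-linear map $B\otimes M\to B$. Their composite is a class $f\in\pi_0\mo{Map}(\bb{1},B)$, namely $\bb{1}\xrightarrow{\mathrm{coev}}M^{\vee}\otimes M\to B$, which sits in bidegree $(1,1)$. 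Set $A\coloneqq B[f^{-1}]$. Over $A$, the maps $x$ and $f^{-1}y$ exhibit $A$ as a retract of $A\otimes M$ in $A$-modules. Idempotent completeness then gives $A\otimes M\simeq A\oplus M'$.

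Compactness of $A$ in $\mo{CAlg}(\mo{Ind}(\cat{C}))$ holds for two reasons. Free algebras on compact objects are compact. Also, $A$ is the pushout of $B\leftarrow \mo{Sym}(\bb{1})\to \mo{Sym}(\bb{1})[t^{-1}]$ along $t\mapsto f$, which is a finite colimit of compact algebras, since $\bb{Q}[t^{\pm1}]$ is compact over $\bb{Q}[t]$.

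It remains to show $A\neq 0$. Since $A=\varinjlim(B\xrightarrow{f}B\xrightarrow{f}\cdots)$ and $\bb{1}$ is compact, $A=0$ iff the unit vanishes at a finite stage, i.e.\ iff $f^n=0$ as a map $\bb{1}\to B$ for some $n$. The map $f^n$ lands in the summand $\mo{Sym}^n(M^\vee)\otimes\mo{Sym}^n(M)$. It is the image of $\mathrm{coev}_{M^{\otimes n}}$ under the symmetrization projection $(M^\vee)^{\otimes n}\otimes M^{\otimes n}\to \mo{Sym}^n(M^\vee)\otimes\mo{Sym}^n(M)$; the two symmetrizations are compatible because the $\Sigma_n$-actions on $M^{\otimes n}$ and its dual are contragredient.

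The step I expect to be the main obstacle is identifying this class, up to the nonzero rational scalar $n!$ or an equivalent normalization, with $\mathrm{coev}_{\mo{Sym}^n(M)}$. This uses $\mo{Sym}^n(M^\vee)\simeq\mo{Sym}^n(M)^\vee$ rationally, via the idempotent $e=\frac1{n!}\sum\sigma$ and its transpose. Granting this, $f^n=0$ iff $\mathrm{coev}_{\mo{Sym}^n(M)}=0$. By the triangle identities, that holds iff $\mathrm{id}_{\mo{Sym}^n(M)}=0$, i.e.\ iff $\mo{Sym}^n(M)=0$. So under the hypothesis that all symmetric powers are nonzero, $f$ is non-nilpotent and $A\neq 0$.
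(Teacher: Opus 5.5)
Your proposal is correct and follows the paper's proof essentially step for step. You use the same universal algebra $B=\mo{Sym}(M\oplus M^{\vee})$, localize at the same class $\bb{1}\xrightarrow{\mo{coev}} M^{\vee}\otimes M\to B$, and deduce non-vanishing from compactness of $\bb{1}$ together with the identification of $f^n$ with $\mo{coev}_{\mo{Sym}^n(M)}$. You also supply details the paper leaves implicit, namely the pushout argument for compactness of $A$ and the nonzero rational normalization scalar in that identification.
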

\begin{proof}
If $\bb{1}$ locally splits off of $M$, then for some non-zero compact algebra $A$ witnessing this splitting, $A$ splits off of $\mo{Sym}^n(M\otimes A)\simeq A\otimes \mo{Sym}^n(M)$, and thus $\mo{Sym}^n(M)\neq 0$, giving the forward direction.

For the converse, we begin with the universal algebra $B$ which admits morphisms $B\to M\otimes B$ and $M\otimes B \to B$.  That is to say, we have $B\coloneqq\Sym(M)\otimes \Sym(M^{\vee})\simeq \Sym(M\oplus M^{\vee})$.  To be explicit, the map $M\otimes B\to B$ is given by 
\[M\otimes B \simeq \bigoplus_{n,m}M\otimes \Symn(M)\otimes \Symm(M^{\vee})\to \bigoplus_{n,m}\Sym[n+1](M)\otimes\Symm(M^{\vee})\to B,\]
and the map $B\to B\otimes M$ is defined analogously using the duality with $M^{\vee}$, namely via
\[B\xrightarrow{\mo{coev}_{M}}M\otimes M^{\vee}\otimes B\to M\otimes B.\]

In total, the composite $B\to B\otimes M \to B$ is the endomorphism of $B$ induced by $\mo{incl}\circ\mo{coev}_{M}\colon \bb{1}\to M\otimes M^{\vee}\to B$, call this endomorphism $\delta\colon B \to B$.  In order to arrive at some compact commutative algebra which splits $A$, it suffices to localize $B$ at $\delta$, setting $A\coloneqq B[\delta^{-1}]$.

It is clear that $A$ splits off of $A\otimes M$, and that $A$ is compact, so it remains to see that $A$ is non-zero.  For this, it suffices to see that the unit map $\bb{1}\to A$ is non-zero.  This map is given as the filtered colimit \[\varinjlim_n (\mo{coev}_{\Symn(M)}\colon \bb{1}\to \Symn(M)\otimes \Symn(M^{\vee})).\]  If this filtered colimit were the zero map, then by compactness of the unit it would be zero at a finite stage, which can only happen if $\Symn(M)\simeq 0$ for some $n$.  Thus, the algebra $A$ is a non-zero compact algebra witnessing that $\bb{1}$ locally splits off of $M$.
\end{proof}
Before moving one, we include one quick helper lemma.
\begin{lemma}\label{lem:KillSymandAlt}
Let $\cat{C}$ be a rational rigid 2-ring, and suppose we are given an object $M\in\cat{C}$ such that $\cS[(n)](M)\simeq 0$ and $\cS[(m)^{t}](M)\simeq 0$ for some $n,m$.  Then $M\simeq 0$.
\end{lemma}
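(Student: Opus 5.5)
The plan is to reduce to a single nonvanishing Schur functor via Deligne's Littlewood--Richardson combinatorics. The hypotheses say $\mathrm{Sym}^n(M)\simeq 0$, so $(1,n+1)\in[\lambda]$ implies $\cSl(M)\simeq 0$ once we know annihilation propagates to larger diagrams. Likewise $\bigwedge^m(M)\simeq 0$, so $(m+1,1)\in[\lambda]$ implies $\cSl(M)\simeq 0$.

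First I will establish the propagation. If $\cSm(M)\simeq 0$ and $[\mu]\subseteq[\lambda]$, then \Cref{prop:Inclusions} gives a partition $\nu$ with $[\lambda\colon\mu,\nu]\neq 0$. By \Cref{prop:omnibusSchur}, $\cSl(M)$ is then a summand of $\cSm(M)\otimes\cS[\nu](M)\simeq 0$, so $\cSl(M)\simeq 0$.

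Next I will take the rectangular partition $\lambda=(n)^m$, which has $n$ columns and $m$ rows. I then enlarge it to $\lambda'=(n)^{m}$ with an extra row and column, or more simply to $\rho=(n+1)^{m+1}$. Then $\rho$ contains $(n+1)$, so $\cS[\rho](M)\simeq 0$. The real goal is a statement about $M^{\otimes N}$, which I obtain as follows. Take $N$ with $N\ge nm+1$, or more generally large. The tensor power decomposes as $M^{\otimes N}\simeq\bigoplus_{|\lambda|=N}\cSl(M)^{\dim V_\lambda}$. Every partition of $N$ with $N>(n)(m)$, or more carefully $N\ge (n+1)(m+1)$ to be safe, has either first row of length $>n$ or first column of length $>m$. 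Indeed, if $\lambda_1\le n$ and the length is at most $m$, then $|\lambda|\le nm$. So with $N=nm+1$ every $\lambda$ satisfies $(1,n+1)\in[\lambda]$ or $(m+1,1)\in[\lambda]$. The propagation step then gives $\cSl(M)\simeq 0$ for all such $\lambda$, and hence $M^{\otimes N}\simeq 0$.

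Finally, I will conclude using rigidity: $M^{\otimes N}\simeq 0$ with $M$ dualizable forces $M\simeq 0$. I expect this to be the main obstacle, since in a general rigid 2-ring $\otimes$-nilpotent objects need not vanish. For instance, with $M=\mathrm{cofib}$ of a nilpotent element, $M\otimes M$ need not be zero, but nilpotence alone does not give zero.

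So I would instead apply the argument in the homotopy category. There $M\otimes M^\vee$ is an algebra with unit $\bb{1}\to M\otimes M^\vee$ whose identity $\mathrm{id}_M$ corresponds to the coevaluation. Then $\mathrm{id}_M^{\otimes N}=\mathrm{id}_{M^{\otimes N}}=0$. I will also exploit that Schur functors are natural on morphisms: $\mathrm{Sym}^n$ and $\bigwedge^m$ of $\mathrm{id}_M$ vanish, so by the same decomposition the $N$-fold tensor power of the endomorphism $\mathrm{id}_M$ is zero. Identifying $\mathrm{End}(M)$ with $\pi_0\,\mathrm{Hom}(\bb{1},M\otimes M^\vee)$, I will attempt to show that $\mathrm{id}_M$ generates a nilpotent ideal of the commutative ring $\pi_0(M\otimes M^\vee)$-type algebra whose unit is $\mathrm{id}_M$, so that the unit is nilpotent and hence $M\otimes M^\vee\simeq 0$. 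That yields $M\simeq 0$, since $M$ is a retract of $M\otimes M^\vee\otimes M$.

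If that route stalls, the fallback is \Cref{thm:CompactCAlgLocallySplit} combined with detection on homological residue fields via \Cref{thm:GeoPoints}. In an NS-like target the object becomes a sum of shifts of the unit, where \Cref{prop:SumsofUnitShift} with $p<m$ and $q<n$ shows that vanishing of both $\mathrm{Sym}^n$ and $\bigwedge^m$ forces $p=q=0$. This is because $(1,n)$ and $(m,1)$ lie in $[\lambda]$ only if $\lambda$ is contained in the rectangle.
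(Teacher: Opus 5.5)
Your reduction to $M^{\otimes N}\simeq 0$ is the paper's argument. Vanishing propagates to larger diagrams by \Cref{prop:Inclusions} and \Cref{prop:omnibusSchur}. Every partition of $N=nm+1$ has $\lambda_1>n$ or length $>m$, so every summand $\cSl(M)$ of $M^{\otimes N}$ vanishes. (The detour through $(n+1)^{m+1}$ is not needed.)

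The gap is the final step. The paper closes it in one line: $M$ is dualizable, so $M\simeq 0$. That line is correct, and your assertion that $\otimes$-nilpotent objects in a rigid 2-ring need not vanish is false. The triangle identity makes $M\to M\otimes M^\vee\otimes M\to M$ the identity. Iterating exhibits $M$ as a retract of $M^{\otimes N}\otimes(M^\vee)^{\otimes(N-1)}\simeq 0$. Your example conflates $\otimes$-nilpotent morphisms with $\otimes$-nilpotent objects: the cofiber of a nilpotent element of $\pi_*\mathbb{1}$ is not a $\otimes$-nilpotent object at all.

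Having rejected the correct step, you never supply a working substitute. Your first route can be completed, but not as stated, since $\mathrm{End}(M)$ is not commutative. The missing observation is that the unit $\eta\colon\mathbb{1}\to A=M\otimes M^\vee$ equals $\mu^{(N)}\circ\eta^{\otimes N}$. This factors through $A^{\otimes N}\simeq M^{\otimes N}\otimes(M^\vee)^{\otimes N}\simeq 0$, forcing $\eta=0$ and hence $\mathrm{id}_M=0$. That is the same retract argument in disguise.

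Your fallback is circular. It uses the fact that a Schur-finite object in an NS-like target is a sum of copies of $\mathbb{1}$ and $\Sigma\mathbb{1}$ (\Cref{cor:ResidueFieldSchurFinite}). That fact is derived from \Cref{prop:LocallyIsoSumsUnit}, whose proof invokes \Cref{lem:KillSymandAlt} precisely to kill the leftover summand $M'$. Moreover, vanishing of $M$ at every homological residue field only yields, via Balmer's nilpotence theorem, that $\mathrm{id}_M$ is $\otimes$-nilpotent, i.e.\ $M^{\otimes k}\simeq 0$. So you would again need exactly the fact about $\otimes$-nilpotent dualizable objects that you doubted.
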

\begin{proof}
Indeed, by \Cref{prop:omnibusSchur} and \Cref{prop:Inclusions}, we have that $\cSl(M)\simeq 0$ for any partition $\lambda$ with $[(n)]\subseteq [\lambda]$ or $[(m)^t]\subseteq [\lambda]$.  

Pick any integer $k\geq mn$.  Then any partition $\lambda$ of $k$ has either $\lambda_1\geq n$, or $\lambda_m\geq 0$, forcing either that $[(n)]\subseteq [\lambda]$ or $[(m)^t]\subseteq [\lambda]$.  In particular, $\cSl(M)\simeq 0$ for any partition $\lambda$ of $k$, and as $M^{\otimes k}$ is a sum of objects of the form $\cSl(M)$ across all partitions $\lambda$ of $k$, we find that $M^{\otimes k}\simeq 0$.  Since $M$ is dualizable, this forces $M\simeq 0$, proving the claim.
\end{proof}

With this in hand, we may prove:
\begin{proposition}\label{prop:LocallyIsoSumsUnit}
Let $\cat{C}$ be a rational rigid 2-ring, and let $M\in\cat{C}$ be a Schur-finite object.  Then for some $p,q$, $M$ is locally isomorphic to $\bb{1}^{\oplus p}\oplus \Sigma\bb{1}^{\oplus q}$.

Furthermore, if $\cS[(a)^b](X)\simeq 0$, then $p$ and $q$ may be chosen with $p\leq b-1$ and $q\leq a-1$.  Furthermore, restricting our algebra $A$ to live in the homological support of the object $\cS[(a-1)^b](X)\otimes \cS[(a)^{b-1}](X)$, we may choose $p=b-1$ and $q=a-1$.
\end{proposition}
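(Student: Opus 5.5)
We argue by induction on the size of a rectangle $(a)^b$ with $\cS[(a)^b](M)\simeq 0$, peeling off copies of $\bb{1}$ and $\Sigma\bb{1}$ one at a time via \Cref{thm:CompactCAlgLocallySplit}, in the spirit of Deligne's argument in \cite{deligne2002categories}. Since $M$ is Schur-finite, \Cref{prop:Inclusions} and \Cref{prop:omnibusSchur} let us enlarge the annihilating partition to a rectangle $(a)^b$. The induction concerns the statement with general $p\le b-1$, $q\le a-1$. Throughout we use that base change along a non-zero compact algebra $A$ commutes with Schur functors, and that a composite of two such base changes is again a base change along a non-zero compact algebra, because a compact $A$-algebra is compact in $\mo{CAlg}(\mo{Ind}(\cat{C}))$. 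This lets us compose local isomorphisms, working in $\mo{Perf}_{\cat{C}}(A)$ at each stage.

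\textbf{Inductive step.} If $M\simeq 0$ after base change, we are done with $p=q=0$. Otherwise, by \Cref{lem:KillSymandAlt}, either $\mo{Sym}^n(M)\neq 0$ for all $n$, or $\bigwedge^m(M)=\cS[(m)^t](M)\neq 0$ for all $m$. Using \Cref{Cor:Shifts}, $\bigwedge^m(M)\simeq\Sigma^{-m}\mo{Sym}^m(\Sigma M)$, so the second case is the first applied to $\Sigma M$. Suppose first that $\mo{Sym}^n(M)\neq0$ for all $n$. \Cref{thm:CompactCAlgLocallySplit} gives a non-zero compact $A$ with $A\otimes M\simeq A\oplus M'$. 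We now need to show that $\cS[(a)^{b-1}](M')\simeq 0$ in $\mo{Perf}_{\cat{C}}(A)$. For this we would use the sum formula of \Cref{prop:omnibusSchur} on $\cS[\lambda](\bb{1}\oplus M')$ together with \Cref{prop:Deligne1.10}, or, more cleanly, the fiber sequence version \Cref{cor:explicitpresentation} after dualizing and shifting. The point is that $\cS[\nu](\bb{1})\neq0$ only for $\nu$ a single row, and a single row can occupy at most one row of the rectangle. The $\Sigma\bb{1}$ case is symmetric: $\cS[(b)^a]$ kills $\Sigma^{-1}(A\otimes M)$, so we split off $\Sigma\bb{1}$ and reduce $a$ to $a-1$. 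The induction terminates because $(a)^0$ or $(0)^b$ is the empty partition, whose Schur functor is the identity on $\bb{1}$ and is zero only when the category is zero. This gives the bounds $p\le b-1$ and $q\le a-1$.

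\textbf{Sharp statement.} For $p=b-1$ and $q=a-1$, we restrict to $A$ in the homological support of $N\coloneqq\cS[(a-1)^b](M)\otimes\cS[(a)^{b-1}](M)$, so that both factors remain non-zero after every base change. First take the final local model $\bb{1}^{p}\oplus\Sigma\bb{1}^{q}$ with $p\le b-1$ and $q\le a-1$. By \Cref{prop:SumsofUnitShift}, $\cS[(a-1)^b]$ of this model vanishes iff $(p+1,q+1)\in[(a-1)^b]$, i.e. iff $q\le a-2$. Similarly, $\cS[(a)^{b-1}]$ of the model vanishes iff $p\le b-2$. Non-vanishing of both therefore forces $q=a-1$ and $p=b-1$. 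It remains to arrange that the base changes along the way avoid $\supph(N)$-killing. Here we pick $A$ as $A_0\otimes B$, where $B$ is a compact algebra detecting a homological prime in $\supph(N)$, which exists via \Cref{thm:GeoPoints}. Equivalently, we run the whole argument inside a category $\cat{L}$ given by such a prime, where $N$ survives, and then descend by compactness to a finite stage.

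\textbf{Main obstacle.} The main obstacle is the inductive step's claim that splitting off one $\bb{1}$ lowers the rectangle height by exactly one. This needs a careful Littlewood–Richardson bookkeeping with \Cref{prop:Deligne1.10}, taking $p,q,r,s$ so that the single-row factor $\nu$ accounts for one row. I would try first to deduce it directly from \Cref{cor:explicitpresentation} applied to the cofiber sequence $\bb{1}\to A\otimes M\to M'$, after dualizing.
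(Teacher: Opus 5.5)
Your skeleton matches the paper's: split off $\bb{1}$ or $\Sigma\bb{1}$ via \Cref{thm:CompactCAlgLocallySplit}, use \Cref{lem:KillSymandAlt} to guarantee one splitting is available while the remainder is non-zero, and use \Cref{prop:SumsofUnitShift} for the sharp statement. But the step you flag as the main obstacle is not established, and the tools you propose for it point the wrong way.

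\Cref{cor:explicitpresentation}, and \Cref{prop:Deligne1.10} on which it rests, only transport vanishing of Schur functors from pieces to a sum or extension. They always enlarge the rectangle. For example, applied to $\Sigma^{-1}M'\to A\to A\otimes M$ it gives merely $\cS[(a+1)^b](M')\simeq 0$. This is weaker than what you already know, since $M'$ is a summand of $A\otimes M$. No arrangement of that corollary can shrink the rectangle.

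What you need is the opposite kind of statement: $\cS[(a)^{b-1}](M')$ must actually \emph{occur} as a summand of $\cS[(a)^b](A\oplus M')\simeq 0$. In the decomposition of \Cref{prop:omnibusSchur}, only $\nu=(k)$ contributes, since $\cS[\nu](\bb{1})\simeq 0$ once $\nu$ has two non-zero rows. So the required input is the nonvanishing $[(a)^b\colon (a)^{b-1},(a)]=1$. This is Pieri's rule (the only horizontal strip of length $a$ removable from a rectangle is its bottom row), or \Cref{prop:LRrule} applied to $(a)^{b-1}$ and $(a)^1$. \Cref{prop:Inclusions} alone does not suffice, since it does not let you take $\nu$ to be a single row. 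With this one-line input your induction goes through. Note that $b\geq 2$ is automatic in the $\bb{1}$-case, since $b=1$ would mean $\mo{Sym}^a(M)\simeq 0$. The $\Sigma\bb{1}$-case follows by transposing via \Cref{Cor:Shifts}.

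The paper sidesteps this computation by never shrinking the rectangle. It keeps $\cS[(a)^b]$ fixed. After locally splitting off $\bb{1}^{\oplus p}\oplus\Sigma\bb{1}^{\oplus q}$, this object is a summand of $A\otimes M$ (the trivial case $\nu=\emptyset$ of the sum formula), hence killed by $\cS[(a)^b]$. \Cref{prop:SumsofUnitShift} then forces $p\leq b-1$ and $q\leq a-1$. That single computation gives the bounds and limits the number of splitting steps, so the iteration terminates; \Cref{lem:KillSymandAlt} then kills the remainder. Your repaired route carries a finer invariant, namely a rectangle annihilating the remainder that loses a row or column at each step, at the price of a Littlewood--Richardson input.

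Two smaller points:
\begin{enumerate}
\item Applying \Cref{thm:CompactCAlgLocallySplit} to $\Sigma M$ splits $\Sigma^{-1}\bb{1}$, not $\Sigma\bb{1}$, off $M$. Either use $\Sigma^{-1}M$, as you do later, or first adjoin $x_2^{\pm1}$ as the paper does.
\item In the sharp part, a compact algebra does not ``detect a homological prime''. Your second formulation is the right one: work over an NS-like $L$ representing a point of $\supph(N)$, where the local model is an honest isomorphism by \Cref{lem:NSlikeLocIsoIsIso}. Then descend along $L\simeq\colim A_i$ to a compact $A_i$ with $A_i\otimes N\neq 0$. This is in fact more explicit about existence than the paper, which only shows that a realizing $A$ supported on $N$ must have $p=b-1$ and $q=a-1$.
\end{enumerate}
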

\begin{proof}
First, note that $\bb{1}_{\cat{C}}\otimes_{\bb{Q}}\bb{Q}[x_2^{\pm 1}]$ is a non-zero compact algebra over $\bb{1}_{\cat{C}}$ (which is even a cover of $\bb{1}_{\cat{C}}$ for the constructible topology), so up to replacing $\cat{C}$ by modules over this algebra, we may assume that there is an equivalence $\Sigma^2\bb{1}_{\cat{C}}\simeq \bb{1}_{\cat{C}}$.  By \Cref{thm:CompactCAlgLocallySplit}, if $\Symn(M)\neq 0$ for all $n$, we may locally split $\bb{1}$ off of $M$, replacing $\cat{C}$ by the category of modules over a non-zero compact algebra $A$ witnessing this splitting, and replacing $M$ by the summand $M^{\prime}$ of $M\otimes A$ orthogonal to $A$, we may iteratively repeat this process.  Similarly, replacing $M$ by $\Sigma M$, we use that
\[\Symn(\Sigma M)\simeq \Sigma^n\bigwedge^n(M),\] 
so we can apply \Cref{thm:CompactCAlgLocallySplit} to $\Sigma M$ to iteratively split $\Sigma \bb{1}$ off of $M$ locally.

Suppose that we can repeat this process finitely many times to locally write $M\simeq \bb{1}^{\oplus p}\oplus \Sigma\bb{1}^{\oplus q}\oplus M^{\prime}$.  If $\cS[(a)^b](M)\simeq 0$, and either $p\geq b$ or $q\geq a$, then 
\[\cS[(a)^b]\left(\bb{1}^{\oplus p}\oplus \Sigma\bb{1}^{\oplus q}\right)\neq 0,\]
contradicting the fact that $\cS[(a)^b](M)\simeq 0$, and therefore the process must terminate at a finite stage.  Suppose the process has terminated, so that, locally, $M\simeq \bb{1}^{\oplus p}\oplus \Sigma\bb{1}^{\oplus q}\oplus M^{\prime}$, with $\Symn(M^{\prime})=0$ for some $n$ and $\bigwedge^m(M^{\prime})\simeq 0$ for some $m$.  By \Cref{lem:KillSymandAlt}, this forces $M^{\prime}$ to be zero, so $M\simeq \bb{1}^{\oplus p}\oplus \Sigma\bb{1}^{\oplus q}$ locally, as desired.

The final claim follows from noting that if $p<b-1$ or $q<a-1$, then either 
\[\cS[(a)^{b-1}]\left(\bb{1}^{\oplus p}\oplus \Sigma\bb{1}^{\oplus q}\right)\simeq 0\] 
or 
\[\cS[(a-1)^{b}]\left(\bb{1}^{\oplus p}\oplus \Sigma\bb{1}^{\oplus q}\right)\simeq 0,\]
so that any algebra $A$ for which $M\otimes A$ is isomorphic to $\bb{1}^{\oplus p}\oplus \Sigma\bb{1}^{\oplus q}$ lives away from the support of $\cS[(a)^{b-1}](M)\otimes \cS[(a-1)^b](M)$.
\end{proof}
We include a quick observation which, when combined with the previous proposition, will allow us to completely describe Schur-finite objects in NS-like rational rigid 2-rings.
\begin{lemma}\label{lem:NSlikeLocIsoIsIso}
If $\cat{L}$ is a NS-like rational rigid 2-ring, then any two objects which are locally isomorphic are in fact isomorphic.
\end{lemma}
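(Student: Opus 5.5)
The plan is to use the Nullstellensatzian property of $\bb{1}_{\cat{L}}$ in $\mo{CAlg}(\mo{Ind}(\cat{L}))$ directly. Recall that an object $R$ in a category like $\mo{CAlg}(\mo{Ind}(\cat{L}))$ is Nullstellensatzian if every compact object under $R$ (i.e. every compact $R$-algebra) which is non-zero admits a map back to $R$ over $R$; equivalently, every non-zero compact algebra $A$ admits an algebra map $A\to\bb{1}_{\cat{L}}$ retracting the unit.

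So let $M,N\in\cat{L}$ be locally isomorphic. Choose a non-zero compact algebra $A\in\left(\mo{CAlg}(\mo{Ind}(\cat{L}))\right)^{\omega}$ together with an $A$-module equivalence $\phi\colon A\otimes M\simeq A\otimes N$. By the NS-like hypothesis, there is a morphism of commutative algebras $s\colon A\to\bb{1}_{\cat{L}}$, necessarily compatible with the units. Base change along $s$ is a symmetric monoidal functor $\bb{1}\otimes_A(-)\colon \mo{Mod}_A(\mo{Ind}(\cat{L}))\to\mo{Ind}(\cat{L})$, and it sends free modules $A\otimes M$ to $\bb{1}\otimes_A A\otimes M\simeq M$, and likewise for $N$. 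Since any functor preserves equivalences, $\bb{1}\otimes_A\phi$ gives an equivalence $M\simeq N$ in $\mo{Ind}(\cat{L})$, and hence in $\cat{L}$ because $\cat{L}\to\mo{Ind}(\cat{L})$ is fully faithful.

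The only point requiring care, and what I expect to be the main (mild) obstacle, is making sure the definition of Nullstellensatzian being used really provides a section $A\to\bb{1}$ for every non-zero compact algebra $A$ under the unit (rather than, say, only a map to some further algebra, or only for algebras in some restricted class). I would check this against the definition in \cite[Appendix~A]{2022arXiv220709929B} as used in \cite[Part~II]{barthel2026geometric}: there an object $R$ is Nullstellensatzian if it is non-terminal and every compact $R$-algebra that is non-terminal admits a map to $R$ under $R$. Since $\bb{1}_{\cat{L}}$ is initial in $\mo{CAlg}(\mo{Ind}(\cat{L}))$, compact objects under it are just compact algebras, and non-terminal means non-zero, which is exactly what we need. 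Then the identification $\bb{1}\otimes_A(A\otimes M)\simeq M$ is the standard composite of free and base-change functors, finishing the proof.
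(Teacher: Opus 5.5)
Your proof is correct and follows the paper's argument. The NS-like hypothesis gives an algebra map $A\to\bb{1}$ retracting the unit, for the non-zero compact algebra $A$ that witnesses the local isomorphism; base-changing the $A$-module equivalence $A\otimes M\simeq A\otimes N$ along this map yields $M\simeq N$, and your check that $\bb{1}$ is initial in $\mo{CAlg}(\mo{Ind}(\cat{L}))$ makes explicit what the paper uses implicitly.
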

\begin{proof}
If $M$ and $N$ are locally isomorphic in $\cat{L}$, there is a non-zero compact algebra $A$ with $M\otimes A \simeq N\otimes A$.  Since $\cat{L}$ is NS-like, there exists a splitting of algebras $\bb{1}\to A \to\bb{1}$.  Basechanging the isomorphism $M\otimes A\simeq N\otimes A$ along $A\to \bb{1}$ yields an isomorphism $M\simeq N$.
\end{proof}

\begin{corollary}\label{cor:ResidueFieldSchurFinite}
Let $\cat{L}$ be a NS-like rational rigid 2-ring.  Then any Schur-finite object $M\in\cat{L}$ is of the form $M\simeq \bb{1}^{\oplus p}\oplus \Sigma\bb{1}^{\oplus q}$ for some $p,q$.  

In particular, if $\cat{L}$ is Schur-finite, then $\cat{L}$ is monogenic, which is to say, generated by the unit $\bb{1}$.  In this case, there exists an $\bb{E}_{\infty}$-ring spectrum $L$ with $\pi_0(L)$ an algebraically closed field, a unit in $\pi_2(L)$ and with $\pi_1(L)=0$, such that $\cat{L}=\mo{Perf}(L)$.
\end{corollary}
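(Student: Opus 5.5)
The first claim comes from combining \Cref{prop:LocallyIsoSumsUnit} with \Cref{lem:NSlikeLocIsoIsIso}. Let $M\in\cat{L}$ be Schur-finite. By \Cref{prop:LocallyIsoSumsUnit} there are $p,q$ and a non-zero compact algebra $A$ with $A\otimes M\simeq A\otimes(\bb{1}^{\oplus p}\oplus\Sigma\bb{1}^{\oplus q})$. One point needs checking here. The proof of that proposition first adjoins a $2$-periodic unit $\bb{Q}[x_2^{\pm1}]$ and then iterates several compact localizations. The result is still a single non-zero compact algebra over $\bb{1}_{\cat{L}}$, because compact algebras over compact algebras are compact and pushouts compose. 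So $M$ and $\bb{1}^{\oplus p}\oplus\Sigma\bb{1}^{\oplus q}$ are locally isomorphic. Since $\cat{L}$ is NS-like, \Cref{lem:NSlikeLocIsoIsIso} upgrades this to a genuine isomorphism.

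Now suppose $\cat{L}$ is Schur-finite. Then every object is a finite sum of copies of $\bb{1}$ and $\Sigma\bb{1}$, so $\cat{L}$ is the thick subcategory generated by $\bb{1}$. By Schwede--Shipley, or Lurie's monogenic recognition \cite{HA}, the functor $\mo{Hom}(\bb{1},-)$ gives a symmetric monoidal equivalence $\cat{L}\simeq\mo{Perf}(L)$ with $L=\mo{End}(\bb{1})$, which is a rational $\bb{E}_\infty$-ring.

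It remains to read off the homotopy of $L$. Apply the first part to the objects $\Sigma^{2}\bb{1}$ and $\Sigma^{-1}\bb{1}$.
\begin{itemize}
\item $\Sigma^2\bb{1}\simeq\bb{1}^{\oplus p}\oplus\Sigma\bb{1}^{\oplus q}$. Since $\Sigma^2\bb{1}$ is invertible, its dimension and $\otimes$-invertibility force $(p,q)=(1,0)$, because $(\bb{1}^{\oplus p}\oplus \Sigma\bb{1}^{\oplus q})^{\otimes k}$ grows unless $p+q=1$. The case $\Sigma^2\bb{1}\simeq\Sigma\bb{1}$ would give $\Sigma\bb{1}\simeq\bb{1}$. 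Then $\bb{1}\simeq \bb{1}\oplus\bb{1}$ after applying $\wedge^2$ (or using $\cS[(1)^n](\Sigma\bb1)$ versus $\mo{Sym}^n$), so $\bb{1}=0$, and we exclude the zero category. Hence $\Sigma^2\bb{1}\simeq\bb{1}$, which gives a unit in $\pi_2(L)$.
\item Decompose the cofiber of any non-zero map $\bb{1}\to\bb{1}$ as $\bb{1}^{\oplus p}\oplus\Sigma\bb{1}^{\oplus q}$.
\end{itemize}

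The main obstacle is showing that $\pi_0(L)$ is a field and $\pi_1(L)=0$. I would instead argue via NS-likeness directly. For a non-zero $x\in\pi_0 L$, the algebra $L[x^{-1}]$ is compact and, if non-zero, admits a retraction back to $L$ by NS-likeness, so $x$ is a unit. If $L[x^{-1}]=0$, then $x$ is nilpotent. Nilpotents $x$ are excluded because $L/\!/x$ for the free $\bb{E}_\infty$-quotient is compact and non-zero, hence retracts onto $L$ over $L$, forcing $x=0$. The same trick with free $\bb{E}_\infty$-algebras killing a class in $\pi_1$ kills $\pi_1$. Rationally these free quotients are $L\otimes\mo{Sym}$ of a cell, so they are compact as algebras. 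Thus $\pi_0L$ is a field and $\pi_1L=0$, and $L$ is even $2$-periodic.

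For algebraic closedness of $\pi_0L$, take a monic polynomial $f$. The algebra $L[t]/f(t)$, formed rationally as the pushout of free algebras, is compact and non-zero. NS-likeness gives a section $L[t]/f\to L$, which is a root of $f$ in $\pi_0L$.
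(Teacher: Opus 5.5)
Your argument is correct, and its first two steps are the paper's: \Cref{prop:LocallyIsoSumsUnit} together with \Cref{lem:NSlikeLocIsoIsIso} gives $M\simeq\bb{1}^{\oplus p}\oplus\Sigma\bb{1}^{\oplus q}$. Then $\cat{L}$ is the thick subcategory generated by $\bb{1}$, so $\cat{L}\simeq\mo{Perf}(L)$ with $L=\bb{1}_{\cat{L}}$. You diverge in identifying $\pi_*L$. The paper notes that $L$ is a Nullstellensatzian rational $\bb{E}_\infty$-ring and quotes the classification \cite[Theorem~A]{2022arXiv220709929B}. You instead reprove the needed direction by hand. You apply NS-likeness to explicit finitely presented $L$-algebras ($L/\!/x$ for $x\in\pi_0L$ or $x\in\pi_1L$, and $L[t]/f$ for monic $f$) and read off what the retraction says. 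This is self-contained and uses the same retraction trick as \Cref{lem:NSlikeLocIsoIsIso}. The citation buys brevity and the converse (every such $L$ is Nullstellensatzian), which this statement does not need.

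A few steps need tightening.
\begin{enumerate}
\item Your ``growth'' argument for $\Sigma^2\bb{1}\simeq\bb{1}$ has no invariant behind it as written, and your second bullet leads nowhere and can be dropped. Either apply NS-likeness to the compact non-zero algebra $L\otimes_{\bb{Q}}\bb{Q}[u^{\pm1}]$, as the paper does at the start of the proof of \Cref{prop:LocallyIsoSumsUnit}. Or note that $\bigwedge^2(\Sigma^2\bb{1})\simeq\Sigma^4\bigwedge^2(\bb{1})\simeq 0$, so \Cref{prop:SumsofUnitShift} forces $q=0$ and $p\le 1$.
\item For a non-unit $x\in\pi_0L$, $L/\!/x$ is non-zero because $\pi_0(L/\!/x)$ contains $\pi_0L/x\neq 0$. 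So the detour through $L[x^{-1}]$ is unnecessary: NS-likeness kills every non-unit directly.
\item For $x\in\pi_1L$, let $\bb{Q}\{t_1\}$ be the free $\bb{E}_\infty$-$\bb{Q}$-algebra on a degree-one class. Its cofiber sequence of modules $\Sigma\bb{Q}\to\bb{Q}\{t_1\}\to\bb{Q}$ base-changes to $\Sigma(L/\!/x)\to L\to L/\!/x$. Hence $L/\!/x\simeq 0$ would force $L\simeq 0$.
\item For monic $f$ of positive degree, $\pi_*(L[t]/f)\cong\pi_*L[t]/(f)$ is free of rank $\deg f$ over $\pi_*L$, so $L[t]/f$ is non-zero.
\end{enumerate}
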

\begin{proof}
If $\cat{L}$ is NS-like, then \Cref{lem:NSlikeLocIsoIsIso} tells us that locally isomorphic objects are in fact isomorphic.  For any Schur-finite object $M$, \Cref{prop:LocallyIsoSumsUnit} tells us that $M$ is locally isomorphic to a direct sum of copies of the unit and shifts of copies of the unit, thus $M\simeq \bb{1}^{\oplus p}\oplus \Sigma\bb{1}^{\oplus q}$.

If $\cat{L}$ is Schur-finite, this means that every object is Schur-finite, so every object of $\cat{L}$ is a sum of shifts of copies of the unit, forcing $\cat{L}\simeq\mo{Perf}(\bb{1}_{\cat{L}})$.  The final claim follows from the description of Nullstellensatzian rational commutative ring spectra from \cite[Theorem~A]{2022arXiv220709929B} as those rational $\bb{E}_{\infty}$-rings $L$ with $\pi_0(L)$ an algebraically closed field, $\pi_1(L)=0$, and such that there is a unit in $\pi_2(L)$.
\end{proof}
In \cite[Corollary~4.22]{ramzi2026chromaticnoshift}, Ramzi poves that Nullstellensatzian rational rigid 2-rings are \textit{not} tt-fields in the sense of \cite{balmer2019tensor}, as they fail to be Krull-Schmidt.  Using the analysis of this section, we can slightly improve upon this result to classify all NS-like rational rigid 2-rings which are tt-fields.
\begin{corollary}\label{cor:NSLikeFields}
If $\cat{L}$ is an NS-like rational rigid 2-ring, then $\cat{C}$ is a tt-field in the sense of \cite{balmer2019tensor} if and only if $\cat{L}$ is Schur-finite.  In particular, the only NS-like rational rigid 2-rings which are classical tt-fields are those which are generated by the unit.
\end{corollary}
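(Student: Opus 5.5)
We prove the two implications separately; both go through \Cref{cor:ResidueFieldSchurFinite}, which identifies the Schur-finite NS-like rational rigid 2-rings with the categories $\mo{Perf}(L)$.

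\emph{Schur-finite implies tt-field.} Suppose $\cat{L}$ is NS-like and Schur-finite. By \Cref{cor:ResidueFieldSchurFinite}, $\cat{L}\simeq\mo{Perf}(L)$, where $L$ is an $\bb{E}_{\infty}$-ring with $\pi_0(L)=k$ algebraically closed, $\pi_1(L)=0$ and a unit in $\pi_2(L)$. Then $\pi_*(L)\simeq k[u^{\pm1}]$ with $|u|=2$, which is a graded field. We check the conditions of \cite{balmer2019tensor}, namely that $\mo{Ind}(\cat{L})$ is a tt-field in the sense that every object is a sum of shifts of $\bb{1}$ and every nonzero object is $\otimes$-faithful.
\begin{itemize}
\item Every $L$-module is free over the graded field $\pi_*L$. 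Hence every object of $\mo{Ind}(\cat{L})$ splits as a sum of $L$ and $\Sigma L$; this is the standard argument for modules over an even periodic field spectrum.
\item Since every object is such a sum, tensoring with a nonzero object is faithful.
\end{itemize}
This direction is essentially formal.

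\emph{tt-field implies Schur-finite.} Suppose $\cat{L}$ is a tt-field. Then every compact object is a finite sum of (shifts of) the indecomposable summands of the unit. The unit's endomorphism ring spectrum $\bb{1}_{\cat{L}}$ satisfies $\pi_0=\mo{End}(\bb{1})$, and this is a field, because a tt-field's $\pi_*$ of the unit is a graded field. Consequently every object $M$ is of the form $\bb{1}^{\oplus p}\oplus\Sigma\bb{1}^{\oplus q}$ (after rewriting the other shifts using the periodicity, if any), or more generally a finite sum of $\Sigma^i\bb{1}$. Each $\Sigma^i\bb{1}$ is Schur-finite: $\bb{1}$ is killed by $\cS[(1)^2]$, and shifts are handled by \Cref{Cor:Shifts}. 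Finite sums of Schur-finite objects are Schur-finite, as in the proof of \Cref{thm:genimpliesall}. Hence $M$ is Schur-finite, and so $\cat{L}$ is Schur-finite. Note that this direction does not even use the NS-like hypothesis.

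The final sentence, that the NS-like classical tt-fields are exactly those generated by the unit, follows from the equivalence just proved together with the monogenicity statement in \Cref{cor:ResidueFieldSchurFinite}.

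The main obstacle is in the first direction: pinning down exactly which definition of tt-field from \cite{balmer2019tensor} is meant, and verifying it for $\mo{Perf}(L)$ when $L$ is merely an $\bb{E}_{\infty}$-ring with those homotopy groups. One needs that every $L$-module splits as a sum of shifts of $L$. I would prove this by lifting a homogeneous basis of $\pi_*M$ to a map $\bigoplus\Sigma^{n_i}L\to M$, which is a $\pi_*$-isomorphism because $\pi_*L$ is a graded field.
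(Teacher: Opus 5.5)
Your first direction (Schur-finite $\Rightarrow$ tt-field, via \Cref{cor:ResidueFieldSchurFinite} and the splitting of modules over the even $2$-periodic field $L$) is correct and is what the paper does.

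The converse has a genuine gap. The definition in \cite{balmer2019tensor} asks only two things: that every object be a coproduct of \emph{compact} objects, and that nonzero objects be $\otimes$-faithful. It does not ask for coproducts of shifts of $\bb{1}$. So your key step, ``every compact object is a finite sum of (shifts of) the indecomposable summands of the unit,'' is unjustified, and it is false for tt-fields in general. Consider the derived category of $\mathrm{Ind}(\mathrm{sVect}_{\bb{Q}})$, or of $\mathrm{Ind}(\mathrm{Rep}(G))$ for $G$ reductive in characteristic $0$. There the odd line (resp.\ a nontrivial irreducible) is a compact indecomposable that is not a shift of $\bb{1}$, even though $\pi_*\bb{1}$ is a graded field. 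Knowing $\pi_*\bb{1}$ tells you nothing about the other indecomposables.

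In particular, your remark that this direction does not use the NS-like hypothesis is wrong. Take Deligne's category $\mathrm{Rep}(S_t)$ with $t$ transcendental; it is semisimple with simple unit. The derived category of its ind-completion is a rational tt-field: every object is a sum of shifts of simples, and $\bb{1}$ is a retract of $S\otimes S^{\vee}$ for each simple $S$. But its generating object $V$ is not Schur-finite. Indeed, $\dim \cSl(V)$ is the value at $t=\dim V$ of a nonzero polynomial with integer roots, so it is nonzero for every $\lambda$.

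What is missing is exactly the use of NS-like, which the paper deploys contrapositively:
\begin{enumerate}
\item If $M\in\cat{L}$ is not Schur-finite, then $\mo{Sym}^n(M)\neq 0$ for all $n$. So $\bb{1}$ locally splits off $M$ by \Cref{thm:CompactCAlgLocallySplit}.
\item The argument of \Cref{lem:NSlikeLocIsoIsIso} (base change along a retraction $A\to\bb{1}$, which exists because $\cat{L}$ is NS-like) upgrades this to an honest splitting $M\simeq \bb{1}\oplus M_1'$.
\item The complement $M_1'$ is again not Schur-finite, since a sum of Schur-finite objects is Schur-finite. Iterating gives $M\simeq\bb{1}^{\oplus n}\oplus M_n'$ for every $n$.
\item Tt-fields are Krull--Schmidt \cite[Theorem~5.7]{balmer2019tensor}, so a compact object has a fixed finite number of indecomposable summands. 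This is a contradiction.
\end{enumerate}
Your deduction of the final sentence from the equivalence and monogenicity is fine once this direction is repaired.
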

\begin{proof}
Suppose that $\cat{L}$ is an NS-like rational rigid 2-ring.  If $\cat{L}$ is Schur-finite, then \Cref{cor:ResidueFieldSchurFinite} tells us that $\cat{L}\simeq \mo{Perf}(L)$ for an even 2-periodic field $L$, and is in particular semi-simple with simple objects $L$, $\Sigma L$.

Suppose now that $\cat{L}$ contains an object $M$ which is not Schur-finite.  Arguing as in \Cref{prop:LocallyIsoSumsUnit}, we can locally split $\bb{1}$ off of $M$ as many times as we want, which by \Cref{lem:NSlikeLocIsoIsIso} shows that for any $n$, there exists some $M_n^{\prime}\in\cat{L}$ such that 
\[M\simeq \bb{1}^{\oplus n} \oplus M_n^{\prime}.\]
Since tt-fields are Krull-Schmidt (and even semi-simple) by \cite[Theorem~5.7]{balmer2019tensor}, any object in a tt-field is a sum of finitely many indecomposable objects in an essentially unique way.  On the other hand, we have shown that any non-Schur-finite object $M$ in $\cat{L}$ admits an arbitrarily large number of indecomposable summands, and thus $\cat{L}$ cannot be a tt-field.
\end{proof}

\Cref{cor:ResidueFieldSchurFinite} is already enough to show that Schur-finiteness puts rather strict conditions on what the dimension of a dualizable object can be, including the circle action on said object.  Namely,
\begin{proposition}\label{prop:Dimensions}
Let $\cat{C}$ be a rational rigid 2-ring.  If $X\in\cat{C}$ is Schur-finite, then the generalized dimension invariants $\mo{dim}(X)_{2i}\in\pi_{2i}(\bb{1}_{\cat{C}})$ are nilpotent for $i>0$.  Moreover, $X$ splits up into a finite sum $X\simeq \bigoplus_{i=1}^{r} Z_i$ with $Z_i\otimes Z_j\simeq 0$ for $i\neq j$, and where for each $Z_i$, there exists an integer $m_i$ such that $\mo{dim}(Z_i)_0-m_i$ is nilpotent in $\pi_0(\bb{1}_{\cat{C}})$.

If $X$ is annihilated by the Schur functor $\cS[(q)^p]$, then the possible integers $m_i$ appearing here are such that $1\!-\!q\leq m_i\leq p\!-\!1$.
\end{proposition}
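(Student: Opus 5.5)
We will detect everything on geometric points. By \Cref{thm:GeoPoints}, every point of the homological spectrum of $\cat{C}$ is witnessed by a symmetric monoidal exact functor $F\colon\cat{C}\to\cat{L}$ with $\cat{L}$ NS-like. Schur functors commute with $F$, so $F(X)$ is Schur-finite in $\cat{L}$, and if $\cS[(q)^p](X)\simeq 0$ then $\cS[(q)^p](F(X))\simeq 0$. By \Cref{cor:ResidueFieldSchurFinite} we get $F(X)\simeq \bb{1}^{\oplus a}\oplus\Sigma\bb{1}^{\oplus b}$. By \Cref{prop:SumsofUnitShift}, $a\le p-1$ and $b\le q-1$. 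The circle-equivariant dimension is symmetric monoidal and natural, and it is additive. On $\bb{1}$ the $S^1$-action on $\mo{dim}$ is trivial, so $\mo{dim}(\bb{1})_{2i}=0$ for $i>0$; likewise for $\Sigma\bb{1}$, with $\mo{dim}(\Sigma\bb{1})_0=-1$. Hence
\[F(\mo{dim}(X)_{2i})=0 \ (i>0),\qquad F(\mo{dim}(X)_0)=a-b\in[1-q,\,p-1].\]

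Next we pass from pointwise vanishing to nilpotence. For a homogeneous element $x\in\pi_*(\bb{1}_{\cat{C}})$, the algebra $\bb{1}[x^{-1}]$ is a compact commutative algebra. If $x$ is not nilpotent, it is non-zero. A non-zero compact algebra has non-empty homological support: in the constructible-spectrum description of \cite{barthel2026geometric}, it admits a map to a Nullstellensatzian algebra. That gives a point $F$ at which $x$ becomes invertible, which contradicts $F(x)=0$. Applying this to $x=\mo{dim}(X)_{2i}$ proves the nilpotence claim. The same argument applies to $\prod_{m=1-q}^{p-1}(\mo{dim}(X)_0-m)$, which vanishes at every point and is therefore nilpotent.

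Now we decompose. Since this product is nilpotent, $\pi_0(\bb{1}_{\cat{C}})$ (rational) modulo its nilradical sees the polynomial with distinct integer roots $m$ split into coprime factors. Idempotents lift along nilpotent thickenings, so the Chinese remainder theorem gives orthogonal idempotents $e_m\in\pi_0(\bb{1}_{\cat{C}})$ with $\sum e_m=1$ and $e_m(\mo{dim}(X)_0-m)$ nilpotent. Idempotents of $\pi_0\mo{End}(\bb{1})$ split $\cat{C}$ as a product, because $\cat{C}$ is idempotent complete. Hence $\bb{1}\simeq\bigoplus \bb{1}_m$ with $\bb{1}_m\otimes\bb{1}_{m'}=0$ for $m\neq m'$. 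Set $Z_m=\bb{1}_m\otimes X$ and discard the zero ones. Then $\mo{dim}(Z_m)_0=e_m\mo{dim}(X)_0$, which, read in the factor ring $e_m\pi_0$, differs from $m$ by a nilpotent. The range $1-q\le m\le p-1$ is automatic from the construction.

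The main obstacle is the step ``non-nilpotent implies invertible at some NS-like point.'' We need the existence of Nullstellensatzian algebras under any non-zero algebra, which holds in $\mo{CAlg}(\mo{Ind}(\cat{C}))$ by the theory cited in the conventions. We also need that compactness of $\bb{1}[x^{-1}]$ is unnecessary there. A second point to check carefully is that the generalized dimensions of $\bb{1}$ and $\Sigma\bb{1}$ vanish in positive degrees. This follows by naturality from $\mo{Perf}(\bb{Q})$, where $\pi_{2i}(\bb{Q})=0$ for $i>0$.
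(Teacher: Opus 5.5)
Your proposal is correct and follows essentially the same route as the paper. At every NS-like point $F(X)\simeq\bb{1}^{\oplus a}\oplus\Sigma\bb{1}^{\oplus b}$ by \Cref{cor:ResidueFieldSchurFinite}, so $\mo{dim}(X)_{2i}$ for $i>0$ and $\prod_m(\mo{dim}(X)_0-m)$ vanish pointwise and are therefore nilpotent, and the splitting comes from the Chinese remainder theorem. You additionally spell out the nilpotence detection (via $\bb{1}[x^{-1}]$ mapping to a Nullstellensatzian algebra) and the idempotent splitting of $\bb{1}$, which the paper leaves implicit. Your reading that $\mo{dim}(Z_m)_0-m$ is nilpotent in the factor ring $e_m\pi_0(\bb{1}_{\cat{C}})=\pi_0\mo{End}(\bb{1}_m)$ is the right way to interpret the statement: read in all of $\pi_0(\bb{1}_{\cat{C}})$, it already fails for $X=\bb{Q}\times\bb{Q}^{2}$ in $\mo{Perf}(\bb{Q}\times\bb{Q})$.
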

\begin{proof}
Let $\cat{C}$ be a rational rigid 2-ring, and let $X$ be Schur-finite.  Then, for any map $F\colon \cat{C}\to\cat{L}$ to a NS-like rational rigid 2-ring $\cat{L}$, $F(X)$ is still Schur-finite, so by \Cref{cor:ResidueFieldSchurFinite}, is isomorphic to a sum of shifts of the unit.  If we choose some rectangular partition $\lambda=(q)^p$ large enough so that $\cSl(X)\simeq 0$, we force $F(X)\simeq  \bb{1}^{\oplus r}\oplus \Sigma\bb{1}^{\oplus s}$ for some $r\!\leq\!p\!-\!1$ and $s\!\leq\!q\!-\!1$ (where $r$ and $s$ possibly depend on the choice of residue field).

In any case, we find that $\mo{dim}(F(X))\!=\!r\!-\!s\in \pi_0(\bb{1}_{\cat{L}}^{\mo{hS}^1})$, since the circle acts trivially on the dimension of the unit and its shifts.  Looking now at the element 
\[\prod_{r=0}^{p-1}\prod_{s=0}^{q-1}(\mo{dim}(X)_0-(r-s))\in \pi_0(\bb{1}_{\cat{C}}),\]
the image of this element vanishes after applying any functor from $\cat{C}$ to a NS-like rational rigid 2-ring.  Since functors to NS-like rational rigid 2-rings detect nilpotence, we must have that $\prod_{r=0}^{p-1}\prod_{s=0}^{q-1}(\mo{dim}(X)_0-(r-s))$ is nilpotent, giving the description for the usual dimension of $X$ (without tracking the circle action).  Similarly, the image of $\mo{dim}(X)_{2i}\in \pi_{2i}(\bb{1}_{\cat{C}})$ vanishes in every NS-like rigid 2-ring $\cat{L}$ with a map from $\cat{C}$, so again must be nilpotent.

The splitting claim follows from using the Chinese remainder theorem applied to the co-maximal elements $(\mo{dim}(X)_0-m)$ for $1\!-\!q\leq m\leq p\!-\!1$.
\end{proof}
\begin{corollary}\label{cor:EnoughttFields}
Let $\cat{C}$ be a Schur-finite rational rigid 2-ring.  Then $\cat{C}$ has enough tt-fields, which are all given by maps $\cat{C}\to \mo{Perf}(L)$, for $L$ a Nullstellensatzian rational commutative ring spectrum.
\end{corollary}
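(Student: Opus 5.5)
We deduce the statement from \Cref{thm:GeoPoints} together with \Cref{cor:ResidueFieldSchurFinite} and \Cref{cor:NSLikeFields}. Recall that ``enough tt-fields'' in the sense of \cite{balmer2019tensor} means that the family of tt-functors $\cat{C}\to\cat{F}$ to tt-fields detects $\otimes$-nilpotence of morphisms; equivalently, every morphism $f$ which becomes zero in every such $\cat{F}$ is $\otimes$-nilpotent. By the homological nilpotence theorem \cite{balmer2020nilpotence}, homological residue fields always detect $\otimes$-nilpotence. By \Cref{thm:GeoPoints}, every point of $\Spech(\cat{C})$ is represented by a tt-functor $F\colon\cat{C}\to\cat{L}$ with $\cat{L}$ NS-like and with the image of $F$ generating $\cat{L}$. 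The plan is therefore to show that each such $\cat{L}$ is of the form $\mo{Perf}(L)$ with $L$ Nullstellensatzian, hence a tt-field.

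\textbf{Step one: $\cat{L}$ is Schur-finite.} Since $\cat{C}$ is Schur-finite and $F$ is symmetric monoidal and additive, $F$ commutes with Schur functors. Hence every object in the image of $F$ is Schur-finite. Because the image generates $\cat{L}$ as an idempotent complete rigid stable symmetric monoidal category, \Cref{thm:genimpliesall} shows that $\cat{L}$ is Schur-finite.

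\textbf{Step two: identify $\cat{L}$.} By \Cref{cor:ResidueFieldSchurFinite}, $\cat{L}\simeq\mo{Perf}(L)$, where $\pi_0(L)$ is an algebraically closed field, $\pi_1(L)=0$, and $\pi_2(L)$ contains a unit. By \cite[Theorem~A]{2022arXiv220709929B}, such an $L$ is Nullstellensatzian. \Cref{cor:NSLikeFields} then shows that $\cat{L}$ is a tt-field; concretely, it is semisimple with simple objects $L$ and $\Sigma L$.

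\textbf{Step three: detection of nilpotence.} Let $f\colon X\to Y$ be a morphism in $\cat{C}$ with $F(f)=0$ for every $F$ as above. We need $f^{\otimes n}=0$ for some $n$. Homological residue fields $\cat{C}\to\bar{\mathcal{A}}_{\mathcal B}$ detect this by \cite{balmer2020nilpotence}, and by \Cref{thm:GeoPoints} each homological residue field factors through, or is detected by, the corresponding NS-like $\cat{L}$. So vanishing in all the $\cat{L}$'s implies vanishing in all homological residue fields, and hence $f$ is $\otimes$-nilpotent.

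The main obstacle is this last compatibility. One must check that the identification $\Spech(\cat{C})\simeq\Speccons(\bb{1})$ from \cite{barthel2026geometric} really transports detection of nilpotence: each homological residue field must be the heart-level shadow of a map to $\mo{Ind}(\cat{L})$, so that $F(f)=0$ forces the image of $f$ in the residue field to vanish. If this is awkward, the fallback is to use that the maps $\bb{1}\to L$ to Nullstellensatzian algebras in $\mo{Ind}(\cat{C})$ are jointly conservative on the constructible spectrum. One then reduces the question via the adjoint $\bb{1}\to\mo{hom}(X,Y)$ to nilpotence of a map out of the unit, using compactness exactly as in the proof of \Cref{thm:CompactCAlgLocallySplit}.
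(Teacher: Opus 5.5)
Your proposal is correct and is essentially the paper's proof: the paper likewise takes NS-like representatives with generating image from \Cref{thm:GeoPoints}, observes that the targets are Schur-finite (via \Cref{thm:genimpliesall}), and concludes with \Cref{cor:ResidueFieldSchurFinite}. Your Step three just makes explicit the detection of $\otimes$-nilpotence that the paper leaves implicit here. The compatibility you flag is not a real obstacle: it is exactly the input the paper uses in the proof of \Cref{thm:NosSchurFinite}, namely \Cref{thm:GeoPoints} together with \cite[Theorem~4.1]{balmer2020nilpotence} and \cite[Theorem~2.9]{barthel2024surjectivity}, and your fallback via compact algebras mapping to Nullstellensatzian algebras would also work.
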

\begin{proof}
By \Cref{thm:GeoPoints}, points in the homological spectrum of $\cat{C}$ are determined by maps into NS-like rational rigid 2-rings, which can be chosen to have image generating the target.  In particular, if $\cat{C}$ is Schur-finite, any such choice of representative of a point in its homological spectrum will provide a map from $\cat{C}$ to a Schur-finite NS-like rational rigid 2-ring, which must have the form $\mo{Perf}(L)$ as in \Cref{cor:ResidueFieldSchurFinite}.
\end{proof}

Before proving the main theorem of the section, we need one last helper lemma to see which Schur functors will annihilate objects in a local rational rigid 2-ring:
\begin{lemma}\label{lem:dimvsPart}
Let $\cat{C}$ be a local rational rigid 2-ring, and let $M\in\cat{C}$ be Schur-finite.  Then the smallest partition $\lambda$ with $\cSl(M)\simeq 0$ is a rectangular partition of the form $(q)^p$, and moreover, $\mo{dim}(M)_0-(p-q)\in\pi_0(\bb{1}_{\cat{C}})$ is nilpotent.
\end{lemma}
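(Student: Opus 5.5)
We need to show that the vanishing of Schur functors on $M$ is controlled by a single rectangle, and then compute the dimension of $M$ pointwise.

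First, pass to NS-like residue fields. By \Cref{thm:GeoPoints}, the homological spectrum of $\cat{C}$ is detected by functors $F\colon \cat{C}\to \cat{L}$ with $\cat{L}$ NS-like. We may choose $\cat{L}$ generated by the image of $\cat{C}$, so $\cat{L}$ is Schur-finite by \Cref{thm:genimpliesall}. Then \Cref{cor:ResidueFieldSchurFinite} gives $F(M)\simeq \bb{1}^{\oplus r_F}\oplus \Sigma\bb{1}^{\oplus s_F}$. By \Cref{prop:SumsofUnitShift}, $\cSl(F(M))\simeq 0$ if and only if $(r_F+1,s_F+1)\in[\lambda]$.

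The key claim is that since $\cat{C}$ is local, the pair $(r_F,s_F)$ is independent of $F$. Here is how I would argue it. Fix a rectangle $(a)^b$ killing $M$; such a rectangle exists because any $\lambda$ is contained in a rectangle, and we can use \Cref{prop:Inclusions} together with \Cref{prop:omnibusSchur}. The objects $\cS[\mu](M)$ for $\mu\subseteq(a)^b$ then have homological supports recording exactly the set $\{F : (r_F+1,s_F+1)\in[\mu]\}$. By \Cref{prop:LocallyIsoSumsUnit}, the locus where $(r_F,s_F)=(r,s)$ is the homological support of $\cS[(s)^{r}](M)\otimes\cS[(s+1)^{r+1}]$-complement data. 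More usefully, the values $(r,s)$ split $M$ into pieces:
\begin{itemize}
\item Consider the element $\prod(\mo{dim}(M)_0-(r-s))$ from \Cref{prop:Dimensions}.
\item Its Chinese-remainder idempotents in $\pi_0(\bb{1}_{\cat{C}})$ produce a decomposition of the unit.
\item Since $\cat{C}$ is local, $\pi_0$ has no nontrivial idempotents, so only one value $m=r-s$ of the dimension occurs.
\end{itemize}
This handles $r-s$, but not $(r,s)$ themselves. For the pair, I would instead use tt-primes. The Balmer support of $\cS[\mu](M)$ is closed, and its complement is the open locus of points $F$ with $(r_F+1,s_F+1)\in[\mu]$. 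For the rectangle $\mu=(s+1)^{r+1}$, the locus $\{(r_F,s_F)\ge(r,s)\}$ is both closed (a support, given by the vanishing of a smaller rectangle) and open. Using the nerves of steel comparison, or at least the surjectivity of $\Spech\to\Spc$, these statements transfer to the Balmer spectrum, which is connected for local $\cat{C}$. One still has to check that the supports of the two Schur objects are complementary. This is the step I expect to be the main obstacle. I would try the following: the homological support of a Schur-finite object is constructible, and its comparison image is both closed and specialization-stable in a local space with a unique closed point. Pushing the unique closed point through forces every $F$ to specialize to one with the same pair. The pair $(r_F,s_F)$ is upper semicontinuous under specialization, since a vanishing Schur functor is inherited. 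It is also constant on the fibres of a specialization, since the dimension $r-s$ is fixed and the rank cannot drop while $r-s$ stays fixed without violating Schur vanishing in the opposite direction.

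Granting constancy $(r_F,s_F)=(p-1,q-1)$, we conclude. Detection of nilpotence, or more precisely conservativity on compact objects detected by homological residue fields, gives $\cSl(M)\simeq0$ if and only if $(p,q)\in[\lambda]$, if and only if $[(q)^p]\subseteq[\lambda]$. So $(q)^p$ is the smallest annihilating partition. The dimension statement then follows exactly as in \Cref{prop:Dimensions}. The image of $\mo{dim}(M)_0$ in every such $\cat{L}$ is $r-s=p-q$, so $\mo{dim}(M)_0-(p-q)$ is nilpotent.
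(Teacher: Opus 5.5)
Your proposal has a genuine gap. Its key claim, that $(r_F,s_F)$ is independent of $F$ when $\cat{C}$ is local, is false, so the clopen/specialization argument you sketch cannot be completed.

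\textbf{Counterexample.} Take $\cat{C}=\mo{Perf}(\bb{Q}[t]_{(t)})$. It is local, since its spectrum is $\Spec(\bb{Q}[t]_{(t)})$, which has a unique closed point. Take $M=\cone(t)$.
\begin{enumerate}
\item At the closed point, $M$ becomes $\bb{1}\oplus\Sigma\bb{1}$, so $(r_F,s_F)=(1,1)$.
\item At the generic point, $t$ is invertible and $M$ becomes $0$, so $(r_F,s_F)=(0,0)$.
\end{enumerate}
So the locus $\{(r_F,s_F)\geq(1,1)\}$ is the closed point, which is closed but not open. Your assertion that ``the rank cannot drop while $r-s$ stays fixed'' is exactly what fails: both ranks drop by one under generalization, and no Schur vanishing is violated. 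The lemma itself still holds here, with minimal annihilating partition $(2)^2$ and $\mo{dim}(M)_0=0$.

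\textbf{Side issues.} Invoking nerves of steel would be circular, because \Cref{thm:NosSchurFinite} uses this lemma. Also, $\cat{L}$ need not be Schur-finite, since only $M$ is assumed Schur-finite. This one is harmless: you only need $F(M)$ to be Schur-finite, which is automatic.

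\textbf{The fix is already in your bullet points.} Because $\cat{C}$ is local and rigid, $\pi_0(\bb{1}_{\cat{C}})$ has no nontrivial idempotents. The Chinese remainder argument then gives a single integer $n$ with $\mo{dim}(M)_0-n$ nilpotent, so $r_F-s_F=n$ at every residue field. Now argue as follows.
\begin{enumerate}
\item The pairs $(r_F,s_F)$ all lie on one diagonal, so they are totally ordered componentwise.
\item They are bounded by $(b-1,a-1)$ for any rectangle $(a)^b$ killing $M$, so they have a maximum $(p-1,q-1)$, attained at some $F_0$.
\item Vanishing of compact objects is detected by residue fields, and $[\lambda]$ is a down-set. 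Hence $\cSl(M)\simeq 0$ if and only if $(r_F+1,s_F+1)\in[\lambda]$ for all $F$, if and only if $(p,q)\in[\lambda]$, if and only if $[(q)^p]\subseteq[\lambda]$.
\item Finally, $n=r_{F_0}-s_{F_0}=p-q$.
\end{enumerate}

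\textbf{Comparison with the paper.} This corrected route differs from the paper's. The paper gets rectangularity from the classification of primes of $\An$ (\Cref{thm:nonrigidGrandLine}), applied to the kernel of $X\mapsto M$, which is prime because $\cat{C}$ is local and rigid. It then uses the last part of \Cref{prop:LocallyIsoSumsUnit} only to produce one residue field where $M$ becomes $\bb{1}^{\oplus p-1}\oplus\Sigma\bb{1}^{\oplus q-1}$, which is precisely your maximal point. The paper never asserts constancy of the pair. Your pointwise argument, once corrected, avoids the appendix altogether.
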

\begin{proof}
That the smallest partition annihilating $M$ is a rectangular partition follows from \Cref{thm:nonrigidGrandLine}.  Since $\cat{C}$ is local, its unit is a local ring, so there is at most one integer $n$ such that $\mo{dim}(X)_0-n$ is nilpotent.  By \Cref{prop:Dimensions}, there exists some integer such that this is the case, and since $\cS[(q-1)^p](X)\otimes \cS[(q)^{p-1}](X)\neq 0$ by choice of $(q)^p$ (and using that $\cat{C}$ is local), \Cref{prop:LocallyIsoSumsUnit} guarantees there exists some residue field of $\cat{C}$ where $M$ has image $\bb{1}^{\oplus p-1}\oplus \Sigma \bb{1}^{\oplus q-1}$, which has dimension $p-q$.  Combining these facts tells us that we must have $n\!=\!p\!-\!q$, as claimed.
\end{proof}
\begin{theorem}\label{thm:NosSchurFinite}
Let $\cat{C}$ be a Schur-finite rational rigid 2-ring.  Then the nerves of steel condition holds for $\cat{C}$.
\end{theorem}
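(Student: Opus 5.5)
Since the comparison map $\phi\colon\Spech(\cat{C})\to\Spc(\cat{C})$ is always surjective for rigid $\cat{C}$, the task is injectivity. I would verify it through the exact-nilpotence criterion of \cite[Theorem~1.6]{hyslop2025towards} and \cite[Theorem~A.1]{balmerhomological}, which reduces the problem to the local case. Concretely, fix a prime $\gp\in\Spc(\cat{C})$ and replace $\cat{C}$ by the local category $\cat{C}_{\gp}$, meaning the idempotent completion of the Verdier quotient $\cat{C}/\gp$. This is still Schur-finite by \Cref{prop:omnibusofExamples}(a). The goal becomes: the homological spectrum of a local Schur-finite rational rigid 2-ring has exactly one point over the closed point. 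Equivalently, any two homological residue fields $F_1\colon\cat{C}\to\cat{L}_1$ and $F_2\colon\cat{C}\to\cat{L}_2$ lying over the unique closed point agree.

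By \Cref{thm:GeoPoints} and \Cref{cor:EnoughttFields}, each $F_j$ can be taken to land in $\mo{Perf}(L_j)$ with $L_j$ an even $2$-periodic algebraically closed field. By \Cref{cor:ResidueFieldSchurFinite}, every object $M$ then has image $F_j(M)\simeq\bb{1}^{\oplus r_j}\oplus\Sigma\bb{1}^{\oplus s_j}$. The key claim is that $(r_j,s_j)$ does not depend on $j$ for any object $M$ of the local ring, and in fact equals $(p-1,q-1)$, where $(q)^p$ is the minimal annihilating rectangle supplied by \Cref{lem:dimvsPart}. To prove it, note that $\cS[(q)^p](F_j M)=0$ forces $r_j\le p-1$ and $s_j\le q-1$ by \Cref{prop:SumsofUnitShift}. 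Next, $\dim(F_jM)_0=r_j-s_j$, and this must equal $p-q$ in $L_j$, because $\dim(M)_0-(p-q)$ is nilpotent and $\pi_0 L_j$ is a field of characteristic zero. Together these inequalities give $r_j=p-1$ and $s_j=q-1$.

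It remains to show that two maps to such tt-fields that agree on object ranks define the same homological prime. The homological prime of $F_j$ is determined by the maximal Serre ideal, namely the kernel of $\hat h(-)\otimes\hat h(L_j)$ on the module category. Since $\mo{Perf}(L_j)$ is semisimple, the composite functor is detected by ranks of images of morphisms, not just of objects. So I would also compare morphisms. Given $f\colon X\to Y$, apply the rank computation to the cofiber $\cone(f)$. The ranks of $F_j X$, $F_j Y$ and $F_j\cone(f)$ are independent of $j$, and they determine the rank of $F_j(f)$ in each degree. In particular, whether $F_j(f)=0$ is independent of $j$. Because the homological primes are the kernels $\ker(\hat F_j)$ restricted to $\mo{mod}\text{-}\cat{C}$, which are generated by images of morphisms killed by $F_j$, this shows the two primes coincide.

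I expect the main obstacle to be this last step. Two issues need care. The first is making precise that equal vanishing behaviour of $F_j(f)$ over all morphisms $f$ forces equality of the Serre ideals, which requires handling morphisms in the Freyd envelope and not just in $\cat{C}$. The second is the degree bookkeeping: the ranks of $F_j(f)$ must be recovered from the long exact sequence and the parity-split ranks of the cofiber. If comparing morphisms directly proves messy, my fallback is to verify exact-nilpotence directly. For a map $f$ with $F_j(f)=0$ for one $j$, the rank computation above shows $F(f)=0$ for every residue field $F$ over the closed point. I would then conclude that $f$ is $\otimes$-nilpotent using the nilpotence theorem for detection by NS-like points, in the same way the proof of \Cref{prop:Dimensions} uses that functors to NS-like rational rigid 2-rings detect nilpotence.
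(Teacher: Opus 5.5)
Your key claim is true for residue fields over the closed point, but your argument for it does not establish it. From $r_j\le p-1$, $s_j\le q-1$ and $r_j-s_j=p-q$ nothing forces equality: every pair $(p-1-k,\,q-1-k)$ with $0\le k<\min(p,q)$ satisfies all three constraints.

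Your derivation never uses that $F_j$ lies over the closed point, and without that hypothesis the conclusion is false. Take $R=\bb{Q}[x]_{(x)}$ and $M=\cone(x\colon R\to R)$ in $\mo{Perf}(R)$. The minimal annihilating rectangle is $(2)^2$ and $\mo{dim}(M)_0=0$. Yet $M$ has ranks $(1,1)$ at the closed residue field and $(0,0)$ at the generic one.

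The missing input is the reverse inequality. As $\cat{C}$ is local and rigid, its closed point is $\gm=0$, so any $F_j$ over it kills no non-zero object. Since $F_j$ commutes with Schur functors, $\cSl(F_jM)\simeq 0$ if and only if $\cSl(M)\simeq 0$. By \Cref{prop:SumsofUnitShift}, the annihilator set of $\bb{1}^{\oplus r}\oplus\Sigma\bb{1}^{\oplus s}$ has minimal element $(s+1)^{r+1}$. So $(r_j,s_j)$ is determined by the annihilator set of $M$ alone, independently of $j$. Concretely, minimality of $(q)^p$ gives $\cS[(q-1)^p](F_jM)\not\simeq 0$ and $\cS[(q)^{p-1}](F_jM)\not\simeq 0$, which force $s_j\ge q-1$ and $r_j\ge p-1$. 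This is the mechanism behind the last clause of \Cref{prop:LocallyIsoSumsUnit}. With this, the dimension input is unnecessary.

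Once repaired, your route works and genuinely differs from the paper's, with one further correction. Object ranks do not determine the rank of $F_j(f)$ in each degree. They determine only the total rank $r(Y)+s(X)-r(\cone f)$, writing $r,s$ for the multiplicities of $\bb{1}$ and $\Sigma\bb{1}$. This suffices, since $F_j(f)=0$ if and only if the total rank vanishes.

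Your Freyd-envelope worry is harmless. Every finitely presented $\cat{C}$-module is $\mathrm{im}\,\hat h(g)$ for a morphism $g$ of $\cat{C}$. By exactness, $\hat F_j(\mathrm{im}\,\hat h(g))=\mathrm{im}\,\hat h(F_jg)$, which vanishes if and only if $F_j(g)=0$.

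The paper instead verifies exact-nilpotence on a triangle $Y\xrightarrow{g}\bb{1}\xrightarrow{f}X$. Using \Cref{cor:explicitpresentation} and a dimension/duality case analysis, it shows that $\cS[(q)^p](Y)\otimes\cS[(q-1)^{p+1}](Y)\otimes f$ is $\otimes$-nilpotent. It tests $f$ only at NS-like points in the support of that object, where $Y$ has the maximal ranks. Your direct comparison of the points over $\gm$ avoids that bookkeeping, and with the fix above it also avoids the rectangularity result from the appendix.

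Your fallback, however, fails as stated. Vanishing of $F(f)$ at all points over $\gm$ does not give $\otimes$-nilpotence, which requires vanishing at every homological prime. The map $x\colon R\to R$ above vanishes at the closed point but is not nilpotent. This is exactly why the paper tensors $f$ with an object whose support it controls.
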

\begin{proof}
It suffices to prove that the exact-nilpotence condition holds for every local Schur-finite rational rigid 2-ring.  Towards this end, fix such a local category $\cat{C}$, and consider a fiber sequence 
\[Y\xrightarrow{g}\bb{1}\xrightarrow{f}X.\]
Since $\cat{C}$ is local, \Cref{thm:nonrigidGrandLine} ensures that there is a smallest partition $\lambda=(p)^q$ such that $\cS[(p)^q](X)\simeq 0$, which by \Cref{cor:explicitpresentation} forces $\cS[(q)^{p+1}](Y)\simeq 0$.  Using \Cref{lem:dimvsPart}, we find that smallest partition $\mu$ such that $\cSm(Y)\simeq 0$ is either $(q)^{p+1}$ or $(q-1)^p$.\footnote{Indeed, we need that $\mo{dim}(Y)_0=1-\mo{dim}(X)_0$, and if $(r)^s$ annihilates $Y$, then $(s)^{r+1}$ annihilates $X$, so we need that $s\geq p$ and $r\geq q-1$, forcing these to be the only two possibilities.}  Up to dualizing to switch the roles of $X$ and $Y$, we may assume that the smallest partition $\mu$ with $\cSm(Y)\simeq 0$ is $(q)^{p+1}$.

We claim now that $\cS[(q)^p](Y)\otimes \cS[(q-1)^{p+1}](Y)\otimes f$ is $\otimes$-nilpotent.  By \Cref{thm:GeoPoints}, combined with \cite[Theorem~4.1]{balmer2020nilpotence} (see also \cite[Theorem~2.9]{barthel2024surjectivity}), it suffices to show that the image of $f$ is the zero map in $\cat{L}$ for some choice of NS-like representative $\cat{L}$ for every point in the homological spectrum living in $\supph(\cS[(q)^{p}](Y)\otimes \cS[(q-1)^{p+1}](Y))$.  

For a given homological residue field on which $\cS[(q)^p](Y)\otimes \cS[(q-1)^{p+1}](Y)$ is supported, choose a NS-like representative $Q\colon\cat{C}\to \cat{L}$ for the point with the image of this map generating $\cat{L}$.  By \Cref{cor:EnoughttFields}, we have that $\cat{L}\simeq \mo{Perf}(L)$ for some Nullstellensatzian rational commutative ring spectrum $L$, and since $\cat{L}$ is contained in the support of $\cS[(q)^p](Y)\otimes \cS[(q-1)^{p+1}](Y)$, the last statement of \Cref{prop:LocallyIsoSumsUnit} (combined with \Cref{lem:NSlikeLocIsoIsIso}) tells us that the image of $Y$ in $\cat{L}$ is
\[Q(Y)\simeq \bb{1}^{\oplus p}\oplus \Sigma\bb{1}^{\oplus q-1}. \]
Similarly, as $\cS[(p)^q](X)\simeq 0$, we must have that
\[Q(X)\simeq \bb{1}^{\oplus a}\oplus \Sigma\bb{1}^{\oplus b},\]
with $a\!\leq\!q\!-\!1$ and $b\!\leq\!p\!-\!1$.  Examining the image of the fiber sequence 
\[\Sigma^{-1}X\to Y \to \bb{1}\to X\]
under $Q$, we obtain
\[\bb{1}^{\oplus b}\oplus \Sigma \bb{1}^{\oplus a}\to \bb{1}^{\oplus p}\oplus\Sigma\bb{1}^{\oplus q-1}\to \bb{1}\to  \bb{1}^{\oplus a}\oplus \Sigma\bb{1}^{\oplus b}.\]
 Since $\cat{L}$ is semi-simple with simple unit, either $Q(f)$ or $Q(g)$ must be split, and the other must be zero.  

If $Q(g)\simeq 0$, then we would have
\[\bb{1}^{\oplus a}\oplus\Sigma\bb{1}^{\oplus b}\simeq \bb{1}\oplus \left(\bb{1}^{\oplus q-1}\oplus\Sigma\bb{1}^{\oplus p}\right),\]
which forces $a\!=\!q$ and $b\!=\!p$, a contradiction.  Hence we must have that $Q(f)\simeq 0$, and $Q(g)$ is split (which also forces $a\!=\!q\!-\!1$, $b\!=\!p\!-\!1$).
\end{proof}

This theorem applies in particular to the categories mentioned in \textsection 3.C.  We mention several special cases which have also appeared (often in weaker forms) in the literature.

Restricting to categories of the form $\mo{Perf}(R)$ for rational $\bb{E}_{\infty}$-rings $R$, \Cref{thm:NosSchurFinite} proves that the nerves of steel condition holds.  This was proven for the special case when $\pi_*(R)$ was a Noetherian ring in \cite[Theorem~1.4]{mathew2017residue} (see also \cite[Proposition~3.5]{hyslop2025towards} for a translation from Mathew's result into the current language).  Similarly, the exact-nilpotence condition (which is equivalent to existence of a unique point of the homological spectrum over the unique closed point in the Balmer spectrum) was proven for connective rational $\bb{E}_{\infty}$-rings with local $\pi_0$ in \cite[Theorem~3.13]{hyslop2025towards}.

Focusing on the case of Voevodsky's category of mixed motives $\mo{DM}_{\bb{Q}}$ over a field $F$, assuming the conjecture that every motive is Schur-finite, we obtain nerves of steel in this case.  If there exists a motivic t-structure on this category, then \cite{gallauer2021note} computes the homological spectrum of $\mo{DM}_{\bb{Q}}$ as a single point.  This result uses simplicity of Tannakian categories in characteristic zero, which Gallauer also uses to deduce the nerves of steel condition for derived categories of such abelian tensor categories.  Our results extend this to derived categories/bounded homotopy categories of super-Tannakian categories in characteristic 0, which need not be simple (and in many cases are not).

In a similar vein, the nerves of steel condition was proven for certain stable categories ``$\mo{Stab}(\mc{F}(\mf{g},\mf{g}_0))$'' of finite dimensional representations of Lie superalgebras in characteristic zero in \cite{hamil2025homological}, under some assumptions on the Lie superalgebra itself which have been verified in the literature only for $\mf{gl}(m|n)$.  Our results apply unconditionally to these kinds of stable categories without assuming any conditions on detecting subalgebras.

\newpage
\appendix
\section{The Non-Rigid 2-Affine Line}
In this appendix, we describe the Balmer spectrum of the non-rigid tt-affine line $\An$.  This category appears in \cite{hyslop2025towards} when studying the exact-nilpotence condition, but is more easily recognized as the derived category of finite degree strict polynomial functors valued in $\bb{Q}$-vector spaces, as introduced in \cite{friedlander1997cohomology}.
\begin{Rec}
The category $\An$ is semi-simple, with simple objects indexed by partitions $\lambda$.  The simple object attached to $\lambda$ is exactly the Schur functor $\cSl$ applied to the free object $X$.
\end{Rec}
In particular, prime thick tensor ideals of $\An$ are in bijection with sets of partitions satisfying properties translated from what it means to be a tensor ideal.  In what follows, we will often abusively identify a prime tensor ideal $\mc{P}$ of $\An$ with the set of partitions $\lambda$ such that the Schur functor $\cSl$ applied to the free object $X$ is in $\mc{P}$, that is, $\cSl(X)\in\mc{P}$.

The following special case of the Littlewood-Richardson rule will be used repeatedly when describing these sets of ideals.

\begin{proposition}[Littlewood-Richardson Rule, {\cite[Theorem~2.4]{okada1998applications}}]\label{prop:LRrule}
Let $(p)^q$ and $(r)^s$ be rectangular partitions with $q\geq s$.  Then the partitions $\lambda$ with $[\lambda\colon (p)^q, (r)^s]\neq 0$ are exactly those partitions $\lambda$ of $pq+rs$ with length $\leq q+s$, and such that
\begin{enumerate}
\item  $\lambda_{s+1}=\lambda_{s+2}=\ldots=\lambda_{q}=p$,
\item  $\lambda_s\geq \max(p,r)$,
\item  and $\lambda_i+\lambda_{q+s-i+1}=p+r$ for all $i=1,\ldots, s$.
\end{enumerate}
\end{proposition}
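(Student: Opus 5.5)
The plan is to reduce the computation to a skew Schur function and exploit the rectangular shape of both factors. Recall that $[\lambda\colon \mu,\nu]$ is the Littlewood--Richardson coefficient $c^{\lambda}_{\mu\nu}=\langle s_{\lambda/\mu},s_\nu\rangle$. First I will record the necessary containments. If $c^\lambda_{\mu\nu}\neq 0$, then $[\mu]\subseteq[\lambda]$ and $[\nu]\subseteq[\lambda]$ by \Cref{prop:Inclusions}. Moreover, the length of $\lambda$ is at most the length of $\mu$ plus the length of $\nu$, which here means $\ell(\lambda)\leq q+s$. This gives the length bound in the statement.

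Next, take $\mu=(p)^q$. Since $\mu$ is a rectangle, the skew diagram $[\lambda]\sminus[(p)^q]$ splits into two pieces that share no row or column. The first is the part in rows $1,\ldots,q$ and columns $>p$; it is the straight shape $\alpha=(\lambda_1-p,\ldots,\lambda_q-p)$. The second is the part in rows $>q$; it is the straight shape $\beta=(\lambda_{q+1},\lambda_{q+2},\ldots)$. Because the two pieces are disconnected, the skew Schur function factors as $s_{\lambda/(p)^q}=s_\alpha s_\beta$. Hence
\[[\lambda\colon (p)^q,(r)^s]=\langle s_\alpha s_\beta,s_{(r)^s}\rangle=c^{(r)^s}_{\alpha\beta}.\]

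The key input is the standard fact that, for a rectangle $(r)^s$, the coefficient $c^{(r)^s}_{\alpha\beta}$ is $1$ if $\alpha,\beta\subseteq(r)^s$ and $\beta$ is the $180^\circ$-complement of $\alpha$ in the rectangle, that is $\alpha_i+\beta_{s+1-i}=r$ for $1\leq i\leq s$, and $0$ otherwise. I would prove it as follows. We have $c^{(r)^s}_{\alpha\beta}=\langle s_{(r)^s/\alpha},s_\beta\rangle$. Rotating the skew shape $(r)^s/\alpha$ by $180^\circ$ yields the straight shape $(r-\alpha_s,\ldots,r-\alpha_1)$. Rotation preserves skew Schur functions (for instance via the involution on semistandard tableaux reversing the entries), so $s_{(r)^s/\alpha}$ is a single Schur function and the pairing is a Kronecker delta.

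It remains to translate back to $\lambda$. The condition that $\alpha$ has length $\leq s$ says $\lambda_{s+1}=\cdots=\lambda_q=p$, which is (a). This uses $q\geq s$: for $q>s$ it is a genuine condition, and for $q=s$ it is vacuous. The condition that $\beta$ has length $\leq s$ is the length bound $\ell(\lambda)\leq q+s$. The complement condition $\alpha_i+\beta_{s+1-i}=r$ becomes $(\lambda_i-p)+\lambda_{q+s+1-i}=r$, which is (c). For the converse, any $\lambda$ satisfying (a)--(c) and the length bound gives complementary $\alpha$ and $\beta$ inside the rectangle, so the coefficient is $1\neq 0$.

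The step I expect to be the main obstacle is condition (b), $\lambda_s\geq\max(p,r)$. It is not a condition on $\alpha$ or $\beta$ separately; instead it encodes that the glued sequence $(p+\alpha_1,\ldots,p+\alpha_q,\beta_1,\ldots)$ is actually a partition. The inequality $\lambda_s\geq p$ is automatic from $\alpha_s\geq 0$. The inequality $\lambda_s\geq r$ is where the work lies, and I will check it separately in the cases $q>s$ and $q=s$. When $q>s$, I would combine $\lambda_q=p\geq\lambda_{q+1}=\beta_1=r-\alpha_s$ to get $\lambda_s=p+\alpha_s\geq r$. When $q=s$, the inequality $\lambda_s\geq\lambda_{s+1}$ together with (c) at $i=s$ must be reconciled with the stated form of (b). I expect some care here, with the boundary indices of (c) possibly needing to be read in the way Okada intends. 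This bookkeeping is the part I would double-check against \cite[Theorem~2.4]{okada1998applications}.
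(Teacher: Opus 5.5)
The paper gives no proof of this statement; it only cites Okada, whose argument uses minor summation formulas. Your reduction via $c^{\lambda}_{\mu\nu}=\langle s_{\lambda/\mu},s_\nu\rangle$ and the rectangle-complement lemma would therefore be an independent, elementary proof. As written, however, it has a genuine error at the factorization step.

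The two pieces of $[\lambda]\smallsetminus[(p)^q]$ share no column only when $\lambda_{q+1}\le p$. If $\lambda_{q+1}>p$, then $(q,p+1)$ and $(q+1,p+1)$ both lie in the skew diagram, one in each piece, and $s_{\lambda/(p)^q}\neq s_\alpha s_\beta$. Your identity $[\lambda\colon(p)^q,(r)^s]=c^{(r)^s}_{\alpha\beta}$ then gives wrong answers. Take $p=q=s=1$, $r=3$, $\lambda=(2,2)$: you get $\alpha=(1)$ and $\beta=(2)$, which are complementary in $(3)$, so your formula returns $1$. The true coefficient is $0$, because $s_{(1)}s_{(3)}=s_{(4)}+s_{(3,1)}$. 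This is exactly why you could not derive (b) when $q=s$. Condition (b) does not say that the glued sequence is a partition, since $(2,2)$ is one. Nor is anything wrong with the indexing in the statement. Rather, (b) is precisely the information your characterization is missing.

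The repair is short.

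\emph{Necessity.} Condition (b) is nothing but $[(p)^q]\subseteq[\lambda]$ and $[(r)^s]\subseteq[\lambda]$, i.e.\ $\lambda_q\ge p$ and $\lambda_s\ge r$, together with $\lambda_s\ge\lambda_q$. You already recorded these containments via \Cref{prop:Inclusions}. Before factoring, you must also show that a nonzero coefficient forces $\lambda_{q+1}\le p$. In a Littlewood--Richardson filling of $\lambda/(p)^q$ with content $(r)^s$, column $p+1$ occupies rows $1,\dots,\lambda^t_{p+1}$. It is strictly increasing with entries in $\{1,\dots,s\}$, so $\lambda^t_{p+1}\le s$, i.e.\ $\lambda_{s+1}\le p$. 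Hence $\lambda_{q+1}\le\lambda_{s+1}\le p$ since $q\ge s$. Only then does $s_{\lambda/(p)^q}=s_\alpha s_\beta$ hold, and your translation yields (a), (c) and the length bound.

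\emph{Sufficiency.} Condition (c) at $i=s$ reads $\lambda_s+\lambda_{q+1}=p+r$. So, given (c), condition (b) is equivalent to $\lambda_{q+1}\le\min(p,r)$. This supplies both the disconnectedness needed for the factorization and $\beta_1\le r$. Also, $\alpha_1\le r$ follows from (c) at $i=1$, and $\lambda_q\ge p$ follows from (a) or (b). The coefficient is then $c^{(r)^s}_{\alpha\beta}=1$.

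With these additions your argument is complete and self-contained.
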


\begin{proposition}\label{Prop:MinRectangles}
Let $\mc{P}$ be a non-zero prime thick $\otimes$-ideal in $\An$.  There is a unique rectangular partition in $\mc{P}$ which is minimal with respect to diagram inclusion among all rectangular partitions in $\mc{P}$.
\end{proposition}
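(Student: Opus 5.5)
The plan is to first show that $\mc{P}$ contains at least one rectangular partition, and then that any two minimal rectangular partitions in $\mc{P}$ must coincide, using primality together with the rectangular case of the Littlewood--Richardson rule (\Cref{prop:LRrule}).

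For existence, pick any $\lambda$ with $\cSl(X)\in\mc{P}$; such a $\lambda$ exists because $\mc{P}$ is non-zero and $\An$ is semi-simple. Then $\lambda$ is contained in some rectangle $(p)^q$. By \Cref{prop:Inclusions}, $\cS[(p)^q](X)$ is a summand of $\cSl(X)\otimes X^{\otimes k}$ for suitable $k$. Since $\mc{P}$ is a thick $\otimes$-ideal, it follows that $(p)^q\in\mc{P}$. The same argument shows that $\mc{P}$, viewed as a set of partitions, is upward closed under diagram inclusion. The rectangular partitions in $\mc{P}$ are partially ordered by inclusion and satisfy the descending chain condition, so minimal elements exist.

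For uniqueness, suppose $(p)^q$ and $(r)^s$ are two distinct minimal rectangles in $\mc{P}$, and assume without loss of generality that $q\geq s$. Since neither contains the other and both are rectangles, we must have $p<r$ and $q>s$. The idea is to produce rectangles $(a)^b\notin\mc{P}$ and $(c)^d\notin\mc{P}$ whose tensor product $\cS[(a)^b](X)\otimes\cS[(c)^d](X)$ only contains Schur summands $\lambda$ lying in $\mc{P}$. Primality would then force one of the two factors into $\mc{P}$, which is a contradiction. A natural first choice is $(a)^b=(r)^{q-1}$ and $(c)^d=(r-1)^{q}$, or a similar pair of rectangles that just fail to contain both $(p)^q$ and $(r)^s$. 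Each partition in the product is then described by \Cref{prop:LRrule}: it has length at most $b+d$, a fixed middle band of rows, and complementary row sums $\lambda_i+\lambda_{b+d-i+1}=a+c$. From this I would argue that every such $\lambda$ contains $(p)^q$ or $(r)^s$, for instance via the box criterion $(q,p)\in[\lambda]$ or $(s,r)\in[\lambda]$. Upward closure then puts $\lambda$ in $\mc{P}$.

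The main obstacle is choosing the test rectangles correctly. They must avoid $\mc{P}$, so they cannot contain any rectangle of $\mc{P}$; in particular they must contain neither $(p)^q$ nor $(r)^s$, nor any other minimal rectangle. At the same time, every Littlewood--Richardson constituent of their product must be forced into $\mc{P}$. I expect to need some flexibility here, such as taking a tensor power of one factor first or using \Cref{prop:Deligne1.10} in place of the full rule, and to check the box conditions carefully in the two regimes $p\leq r$ with $q>s$. If this approach fails, a fallback is to combine \Cref{prop:Deligne1.10} with primality to show directly that $(p)^s$ (the intersection rectangle) lies in $\mc{P}$. That would contradict minimality.
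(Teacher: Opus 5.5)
Your overall strategy matches the paper's: upward closure, then two rectangles outside $\mc{P}$ whose product has all Littlewood--Richardson constituents in $\mc{P}$, then primality. Your existence argument is fine and fills in a step the paper leaves implicit. However, the uniqueness step is the whole content of the proposition, you do not carry it out, and the one concrete choice you propose fails. With $p<r$ and $s<q$:
\begin{itemize}
\item the rectangle $(r)^{q-1}$ contains $(r)^s$, because $s\le q-1$;
\item the rectangle $(r-1)^q$ contains $(p)^q$, because $p\le r-1$.
\end{itemize}
So both of your test rectangles already lie in $\mc{P}$, and primality gives nothing. In the smallest case $(p)^q=(1)^2$, $(r)^s=(2)^1$, your pair is literally the original pair. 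Your fallback of showing $(p)^s\in\mc{P}$ via \Cref{prop:Deligne1.10} only moves the problem. Primality can be used only after you exhibit two objects outside $\mc{P}$ whose product lands inside it, and that is exactly what is missing.

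The missing idea is to shrink each minimal rectangle in the direction in which it beats the other. Take $(p)^{q-1}$, deleting the last row of the tall rectangle, and $(r-1)^s$, deleting the last column of the wide one. Since $q>s\geq 1$ and $r>p\geq 1$, these are rectangles strictly contained in $(p)^q$ and $(r)^s$ respectively, so by minimality neither lies in $\mc{P}$. Because $q-1\geq s$, \Cref{prop:LRrule} applies to the pair. Every constituent $\lambda$ of $\cS[(p)^{q-1}](X)\otimes\cS[(r-1)^s](X)$ therefore satisfies $\lambda_s\geq\max(p,r-1)=r-1$, and, taking $i=s$ in the last condition, $\lambda_s+\lambda_q=p+r-1$. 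There are two cases:
\begin{itemize}
\item if $\lambda_s\geq r$, then $[(r)^s]\subseteq[\lambda]$;
\item if $\lambda_s=r-1$, then $\lambda_q=p$, so $[(p)^q]\subseteq[\lambda]$.
\end{itemize}
By your upward-closure observation every constituent lies in $\mc{P}$, so the product does too. Primality then puts $(p)^{q-1}$ or $(r-1)^s$ into $\mc{P}$, contradicting minimality.
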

\begin{proof}
Suppose that $(p)^q$ and $(r)^s$ are two incomparable, minimal with respect to inclusion, rectangular partitions in $\mc{P}$.  By symmetry, we may assume that $p<r$ and $s<q$.  Consider the partitions $\lambda$ with $[\lambda\colon (p)^{q-1},(r-1)^s]\neq 0$.  By \Cref{prop:LRrule}, these are exactly the partitions $\lambda$ with lengths $\leq s+q-1$, satisfying the properties therein.  In particular, $\lambda_{s}\geq r-1$, $\lambda_{q-1}=p$, and most importantly, $\lambda_s+\lambda_{q}=p+r-1$.

We must have either that $\lambda_{s}>r-1$, in which case, $\lambda_s\geq r$, and $[(r)^s]\subseteq [\lambda]$, or else $\lambda_{s}=r-1$, which forces $\lambda_q=p$, so that $[(p)^q]\subseteq [\lambda]$, and in either case, \Cref{prop:Inclusions} and \Cref{prop:omnibusSchur} tell us that $\lambda\in\mc{P}$.  Since $\lambda\in\mc{P}$ for all partitions $\lambda$ appearing as a summand of $\cS[(r-1)^s](X)\otimes \cS[(p)^{q-1}](X)$, and since $\mc{P}$ is prime, we must have that either $(r-1)^s\in\mc{P}$ or $(p)^{q-1}\in\mc{P}$, contradicting minimality with respect to inclusion of diagrams of the partitions $(r)^s$ and $(p)^q$.
\end{proof}

\begin{theorem}\label{thm:nonrigidGrandLine}
The set of non-zero prime thick $\otimes$-ideals of $\An$ is in bijection with the set of ordered pairs of positive integers $(p,q)$, where the prime labeled by $(p)^q$ corresponds to the set of partitions $\lambda$ with $[(p)^q]\subseteq [\lambda]$.
\end{theorem}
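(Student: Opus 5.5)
The plan is to prove the two directions of the bijection separately, using \Cref{Prop:MinRectangles} to attach to each non-zero prime its minimal rectangle. Throughout we identify a thick $\otimes$-ideal $\mc{P}$ with its set of partitions. Since $\An$ is semi-simple, $\mc{P}$ is determined by which simples $\cSl(X)$ it contains. By \Cref{prop:omnibusSchur} and \Cref{prop:Inclusions}, this set is upward closed under diagram inclusion. Indeed, if $\lambda\in\mc{P}$ and $[\lambda]\subseteq[\lambda']$, then $\cS[\lambda'](X)$ is a summand of $\cSl(X)\otimes X^{\otimes(|\lambda'|-|\lambda|)}$.

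First I will show that for each pair $(p,q)$ of positive integers the set $\mc{P}_{(p)^q}=\SET{\lambda}{[(p)^q]\subseteq[\lambda]}$ is a prime thick $\otimes$-ideal.
\begin{itemize}
\item It is an ideal, because $(p,q)\in[\lambda]$ implies $(p,q)\in[\nu]$ for every $\nu$ with $[\nu\colon\lambda,\mu]\neq 0$; this is the containment $[\lambda]\subseteq[\nu]$ from \Cref{prop:Inclusions}.
\item It is prime by \Cref{prop:Deligne1.10}. Suppose $\cSl(X)\otimes\cSm(X)\in\mc{P}_{(p)^q}$, which by semisimplicity means every constituent $\nu$ has $(p,q)\in[\nu]$. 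I will choose $a,b$ maximal with $(a+1,b+1)\in[\lambda]$ and a suitable complementary pair for $\mu$ such that some constituent $\nu$ contains the box $(a+c+1,b+d+1)$. \Cref{prop:Deligne1.10} is phrased the other way, though, so I expect to argue via the contrapositive.
\end{itemize}

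The cleaner route for primality is to realise $\mc{P}_{(p)^q}$ as the kernel of a tensor functor. Take the symmetric monoidal functor $\An\to\mo{Perf}(\bb{Q})$ sending $X\mapsto\bb{1}^{\oplus (q-1)}\oplus\Sigma\bb{1}^{\oplus(p-1)}$. By \Cref{prop:SumsofUnitShift} (with the indices matched correctly), its kernel is exactly the set of $\lambda$ with $(q,p)$ or $(p,q)\in[\lambda]$, after adjusting conventions. Since the target is a tt-field with prime zero ideal, the kernel is prime; this handles primality without any Littlewood--Richardson bookkeeping. Distinct pairs $(p,q)$ clearly give distinct sets, since the minimal element $(p)^q$ differs.

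Conversely, let $\mc{P}$ be a non-zero prime. By \Cref{Prop:MinRectangles} it has a unique minimal rectangle $(p)^q$, and upward closure gives $\mc{P}_{(p)^q}\subseteq\mc{P}$. The main obstacle is the reverse inclusion: showing every $\lambda\in\mc{P}$ contains $(p)^q$. Suppose $\lambda\in\mc{P}$ with $(p,q)\notin[\lambda]$, so either $\lambda_q<p$ or the length of $\lambda$ is below $q$. Then $[\lambda]$ is contained in a union of two rectangles $(N)^{q-1}\cup(p-1)^{N}$ for large $N$, i.e.\ the hook-shaped region avoiding box $(p,q)$. Now $\lambda\in\mc{P}$ means its image under the functor above is zero. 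I will instead show that some tensor power of $\cSl(X)$ has a constituent that is a large rectangle $(r)^s$ contained in such a hook, contradicting minimality. The natural attempt is to show that $\cSl(X)^{\otimes k}$ (or $\cSl(X)\otimes$ an auxiliary $\cSm(X)\notin\mc{P}$) contains a constituent $(r)^s$ not containing $(p)^q$. Since $\mc{P}$ is upward closed, we may enlarge $\lambda$ to a ``fat hook'' $(N)^{q-1}+(p-1)^{M}$ still in $\mc{P}$. I would try to realise this fat hook as lying in a product of the rectangles $(N)^{q-1}$ and $(p-1)^{M}$ via \Cref{prop:LRrule}, and then use primality to deduce that one of these two rectangles lies in $\mc{P}$. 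Either rectangle would be a rectangle in $\mc{P}$ not containing $(p)^q$, contradicting the uniqueness of the minimal rectangle. Verifying that \Cref{prop:LRrule} really produces the fat hook as an upper bound for all constituents, so that every constituent of the product lies in $\mc{P}$, is the step I expect to require the most care.
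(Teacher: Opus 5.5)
Your final plan is the paper's proof: primality of $\{\lambda : [(p)^q]\subseteq[\lambda]\}$ comes from the kernel of $X\mapsto\bb{1}^{\oplus q-1}\oplus\Sigma\bb{1}^{\oplus p-1}$ (by \Cref{prop:SumsofUnitShift} the kernel condition is $(q,p)\in[\lambda]$, i.e.\ $\lambda_q\geq p$). For the converse you enlarge an offending $\lambda$ to the fat hook $[(N)^{q-1}]\cup[(p-1)^M]$ and apply primality to $\cS[(N)^{q-1}](X)\otimes\cS[(p-1)^M](X)$, exactly as the paper does with $(r)^{q-1}$ and $(p-1)^s$. The step you flag needs no Littlewood--Richardson bookkeeping, since by \Cref{prop:Inclusions} every constituent $\nu$ of $\cS[\alpha](X)\otimes\cS[\beta](X)$ satisfies $[\alpha]\cup[\beta]\subseteq[\nu]$; so read the fat hook as this union (what you need is that it is contained in every constituent). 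Two passing claims are wrong and should be deleted: $\lambda\in\mc{P}$ does not make $\cSl(X)$ vanish under that functor (it survives precisely because $\lambda_q<p$), and a tensor power of $\cSl(X)$ need not have a rectangular constituent avoiding $(p)^q$ (this already fails for $\lambda=(2,1)$, $(p)^q=(2)^2$).
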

\begin{proof}
If $\mc{P}$ is a non-zero prime thick $\otimes$-ideal, then \Cref{Prop:MinRectangles} implies that there is a minimal rectangular partition $(p)^q$ contained in $\mc{P}$.  We claim that every partition in $\mc{P}$ contains $(p)^q$.  Assume by way of contradiction that $\mc{P}$ contains a partition $\lambda$ not containing $(p)^q$, and up to extending $\lambda$, we may assume that $\lambda$ has the form $(r,r,\ldots, r, p-1,p-1\ldots, p-1)$ of some partition of length say, $s\geq q$, with the first $q-1$ entries all being some integer $r\geq p$.

Consider the rectangular partitions $(r)^{q-1}$ and $(p-1)^s$.  By \Cref{prop:LRrule}, the partitions $\mu$ such that $\cSm$ appears in the Schur functor decomposition of $\cS[(r)^{q-1}](X)\otimes \cS[(p-1)^s](X)$ all have the property that 
\[\mu_q=\ldots=\mu_s=p-1,\]
with $\mu_{q-1}\geq r$.  In particular, we have $[\lambda]\subseteq [\mu]$, so that once again by primeness of $\mc{P}$, we must have that $(r)^{q-1}\in\mc{P}$ or $(p-1)^s\in\mc{P}$, but this contradicts the choice of $(p)^q$ as the unique minimal by inclusion rectangular partition contained in $\mc{P}$, since $(p)^q$ is incomparable with either $(r)^{q-1}$ or $(p-1)^s$.

To see that each ideal $\{\lambda\colon [(p)^q]\subseteq [\lambda]\}$ is indeed prime, note that this is exactly the kernel of the functor 
\[\An\to\mc{D}^b(\bb{Q}), \quad X\mapsto \bb{Q}^{\oplus q-1}\oplus \Sigma \bb{Q}^{\oplus p-1}\]
by \Cref{prop:SumsofUnitShift}.
\end{proof}

\newpage
\printbibliography

@article{deligne2002categories,
  title={Cat{\'e}gories tensorielles},
  author={Deligne, Pierre},
  journal={Mosc. Math. J},
  volume={2},
  number={2},
  pages={227--248},
  year={2002}
}

@article{barthel2026geometric,
  title={Geometric Points in Tensor Triangular Geometry},
  author={Barthel, Tobias and Hyslop, Logan and Ramzi, Maxime},
  journal={arXiv preprint arXiv:2603.25664},
  year={2026}
}

@article{balmer2020nilpotence,
  title={Nilpotence Theorems Via Homological Residue Fields},
  author={Balmer, P},
  journal={Tunisian Journal of Mathematics},
  volume={2},
  number={2},
  pages={359--378},
  year={2020},
  publisher={Mathematical Science Publishers}
}

@article{hyslop2025towards,
  title={Towards the nerves of steel conjecture},
  author={Hyslop, Logan},
  journal={Journal of Algebra},
  year={2025},
  publisher={Elsevier}
}

@article{mazza2004schur,
  title={Schur Functors and Motives},
  author={Mazza, Carlo},
  journal={K-Theory},
  volume={33},
  pages={89--106},
  year={2004}
}

@article{balmer2019tensor,
  title={Tensor-triangular fields: ruminations},
  author={Balmer, Paul and Krause, Henning and Stevenson, Greg},
  journal={Selecta Mathematica},
  volume={25},
  number={1},
  pages={13},
  year={2019},
  publisher={Springer}
}

@incollection {deligne,
    AUTHOR = {Deligne, P.},
     TITLE = {La cat\'{e}gorie des repr\'{e}sentations du groupe
              sym\'{e}trique {$S_t$}, lorsque {$t$} n'est pas un entier
              naturel},
 BOOKTITLE = {Algebraic groups and homogeneous spaces},
    SERIES = {Tata Inst. Fund. Res. Stud. Math.},
    VOLUME = {19},
     PAGES = {209--273},
 PUBLISHER = {Tata Inst. Fund. Res., Mumbai},
      YEAR = {2007},
      ISBN = {978-81-7319-802-1},
   MRCLASS = {20C30 (18D10)},
  MRNUMBER = {2348906},
MRREVIEWER = {Karin\ Erdmann},
}

@book{james2006representation,
  title={The Representation Theory of the Symmetric Groups},
  author={James, G.D.},
  isbn={9783540357117},
  series={Lecture Notes in Mathematics},
  url={https://books.google.com/books?id=L4B8CwAAQBAJ},
  year={2006},
  publisher={Springer Berlin Heidelberg}
}

@article{macpherson2022bivariant,
  title={A bivariant Yoneda lemma and ($\infty$, 2)-categories of correspondences},
  author={Macpherson, Andrew W},
  journal={Algebraic \& Geometric Topology},
  volume={22},
  number={6},
  pages={2689--2774},
  year={2022},
  publisher={Mathematical Sciences Publishers}
}

@article{barthel2024surjectivity,
  title={On surjectivity in tensor triangular geometry},
  author={Barthel, Tobias and Castellana, Natalia and Heard, Drew and Sanders, Beren},
  journal={Mathematische Zeitschrift},
  volume={308},
  number={4},
  pages={65},
  year={2024},
  publisher={Springer}
}

@article {HA,
	url = {https://www.math.ias.edu/~lurie/papers/HA.pdf},
	author = {Lurie, Jacob},
	title = {Higher Algebra},
	note = {\href{http://www.math.ias.edu/~lurie/}{available online}},
	year = {2017}
}

@ARTICLE{2022arXiv220709929B,
       author = {{Burklund}, Robert and {Schlank}, Tomer M. and {Yuan}, Allen},
        title = "{The Chromatic Nullstellensatz}",
      journal = {arXiv e-prints},
         year = 2022,
        month = jul,
          eid = {arXiv:2207.09929},
        pages = {arXiv:2207.09929},
          doi = {10.48550/arXiv.2207.09929},
archivePrefix = {arXiv},
       eprint = {2207.09929},
 primaryClass = {math.AT},
       adsurl = {https://ui.adsabs.harvard.edu/abs/2022arXiv220709929B}
}

@misc{burklund2022galoisreconstructionartintatemathbbrmotivic,
      title={Galois reconstruction of Artin-Tate $\mathbb{R}$-motivic spectra}, 
      author={Robert Burklund and Jeremy Hahn and Andrew Senger},
      year={2022},
      eprint={2010.10325},
      archivePrefix={arXiv},
      primaryClass={math.AT},
      url={https://arxiv.org/abs/2010.10325}, 
}

@misc{ramzi2026chromaticnoshift,
      title={Chromatic Noshift}, 
      author={Maxime Ramzi},
      year={2026},
      eprint={2604.01863},
      archivePrefix={arXiv},
      primaryClass={math.KT},
      url={https://arxiv.org/abs/2604.01863}, 
}

@article{mathew2017residue,
  title={Residue fields for a class of rational $E_{\infty}$-rings and applications},
  author={Mathew, Akhil},
  journal={Journal of Pure and Applied Algebra},
  volume={221},
  number={3},
  pages={707--748},
  year={2017},
  publisher={Elsevier}
}

@article{gallauer2021note,
  title={A note on Tannakian categories and mixed motives},
  author={Gallauer, Martin},
  journal={Bulletin of the London Mathematical Society},
  volume={53},
  number={1},
  pages={119--129},
  year={2021},
  publisher={Wiley Online Library}
}

@article{friedlander1997cohomology,
  title={Cohomology of finite group schemes over a field},
  author={Friedlander, Eric M and Suslin, Andrei},
  journal={Inventiones mathematicae},
  volume={127},
  number={2},
  pages={209--270},
  year={1997}
}

@article{okada1998applications,
  title={Applications of minor summation formulas to rectangular-shaped representations of classical groups},
  author={Okada, Soichi},
  journal={Journal of Algebra},
  volume={205},
  number={2},
  pages={337--367},
  year={1998},
  publisher={Academic Press}
}

@article{hamil2025homological,
  title={The homological spectrum and nilpotence theorems for Lie superalgebra representations},
  author={Hamil, Matthew H and Nakano, Daniel K},
  journal={Journal of Algebra},
  year={2025},
  publisher={Elsevier}
}

@book{patchkoria2025adams,
title = "Adams spectral sequences and Franke's algebraicity conjecture",
author = "Irakli Patchkoria and Piotr Pstragwoski",
year = "2025",
month = may,
day = "9",
language = "English",
series = "Memoirs of the American Mathematical Society",
publisher = "American Mathematical Society",
address = "United States",
}

@book {HTT,
    AUTHOR = {Lurie, Jacob},
     TITLE = {Higher topos theory},
    SERIES = {Annals of Mathematics Studies},
    VOLUME = {170},
 PUBLISHER = {Princeton University Press, Princeton, NJ},
      YEAR = {2009},
     PAGES = {xviii+925},
      ISBN = {978-0-691-14049-0; 0-691-14049-9},
   MRCLASS = {18-02 (18B25 18E35 18G30 18G55 55U40)},
  MRNUMBER = {2522659},
MRREVIEWER = {Mark\ Hovey},
       DOI = {10.1515/9781400830558},
       URL = {https://doi.org/10.1515/9781400830558},
}

@article {Balmer2005,
    AUTHOR = {Balmer, Paul},
     TITLE = {The spectrum of prime ideals in tensor triangulated
              categories},
   JOURNAL = {J. Reine Angew. Math.},
  FJOURNAL = {Journal f\"{u}r die Reine und Angewandte Mathematik. [Crelle's
              Journal]},
    VOLUME = {588},
      YEAR = {2005},
     PAGES = {149--168},
      ISSN = {0075-4102},
   MRCLASS = {18E30 (55P99)},
  MRNUMBER = {2196732},
MRREVIEWER = {Amnon Neeman},
       DOI = {10.1515/crll.2005.2005.588.149},
       URL = {https://doi.org/10.1515/crll.2005.2005.588.149},
}

@article{luriecob,
  title={On the classification of topological field theories},
  author={Lurie, Jacob},
  journal={Current developments in mathematics},
  volume={2008},
  number={1},
  pages={129--280},
  year={2008},
  publisher={International Press of Boston}
}

@ARTICLE{harpaz,
       author = {{Harpaz}, Yonatan},
        title = "{The Cobordism Hypothesis in Dimension 1}",
      journal = {arXiv e-prints},
         year = 2012,
        month = sep,
          eid = {arXiv:1210.0229},
        pages = {arXiv:1210.0229},
          doi = {10.48550/arXiv.1210.0229},
archivePrefix = {arXiv},
       eprint = {1210.0229},
 primaryClass = {math.AT},
       adsurl = {https://ui.adsabs.harvard.edu/abs/2012arXiv1210.0229H}
}

@article {balmerhomological,
    AUTHOR = {Balmer, Paul},
     TITLE = {Homological support of big objects in tensor-triangulated
              categories},
   JOURNAL = {J. \'{E}c. polytech. Math.},
  FJOURNAL = {Journal de l'\'{E}cole polytechnique. Math\'{e}matiques},
    VOLUME = {7},
      YEAR = {2020},
     PAGES = {1069--1088},
      ISSN = {2429-7100},
   MRCLASS = {18G80 (18M05 20J05 55U35)},
  MRNUMBER = {4136434},
MRREVIEWER = {Bo Lu},
       DOI = {10.5802/jep.135},
       URL = {https://doi.org/10.5802/jep.135},
}

\end{document}